\numberwithin{equation}{section}
\theoremstyle{plain}
\newtheorem*{theorem*}{Theorem}
\newtheorem{theorem}{Theorem}[section]
\newtheorem{lemma}[theorem]{Lemma}
\newtheorem{proposition}[theorem]{Proposition}
\newtheorem{corollary}[theorem]{Corollary}
\theoremstyle{remark}
\newtheorem{remark}[theorem]{Remark}
\newtheorem*{remark*}{Remarks}
\theoremstyle{definition}
\newcommand{\thmref}[1]{Theorem~\ref{#1}}
\newcommand{\secref}[1]{Section~\ref{#1}}
\newcommand{\lemref}[1]{Lemma~\ref{#1}}
\newcommand{\propref}[1]{Proposition~\ref{#1}}
\newcommand{\corref}[1]{Corollary~\ref{#1}}
\newcommand{\remref}[1]{Remark~\ref{#1}}
\newcommand{\EE}{\mathbb{E}}
\newcommand{\HH}{\mathbb{H}}
\newcommand{\NN}{\mathbb{N}}
\newcommand{\PP}{\mathbb{P}}
\newcommand{\RR}{\mathbb{R}}
\newcommand{\ZZ}{\mathbb{Z}}
\newcommand{\cA}{\mathcal{A}}
\newcommand{\cC}{\mathcal{C}}
\newcommand{\cD}{\mathcal{D}}
\newcommand{\cE}{\mathcal{E}}
\newcommand{\cF}{\mathcal{F}}
\newcommand{\cG}{\mathcal{G}}
\newcommand{\cK}{\mathcal{K}}
\newcommand{\cM}{\mathcal{M}}
\newcommand{\cN}{\mathcal{N}}
\newcommand{\cP}{\mathcal{P}}
\newcommand{\cS}{\mathcal{S}}
\newcommand{\cT}{\mathcal{T}}
\newcommand{\cV}{\mathcal{V}}
\newcommand{\cW}{\mathcal{W}}
\newcommand{\Ind}[1]{\mathbf{1}_{\{#1\}}}
\newcommand{\tx}{{T_{\mathrm{mix}}}}
\newcommand{\Var}{\operatorname{Var}}
\newcommand{\Cov}{\operatorname{Cov}}
\newcommand{\floor}[1]{\left\lfloor #1 \right\rfloor}
\newcommand{\ceil}[1]{\left\lceil #1 \right\rceil}
\newcommand{\overE}{{\overline{\mathbb E}}}
\newcommand{\err}{\operatorname{err}}
\begin{document}

\title{Aging of the Metropolis dynamics on the Random Energy Model}
\author{Jiří Černý}
\address{\tiny Jiří Černý\\University of Vienna\\Faculty of Mathematics\\
Oskar-Morgenstern-Platz 1\\A-1090 Wien\\Austria}
\email{\href{mailto:jiri.cerny@univie.ac.at}
{jiri.cerny@univie.ac.at}}
\author{Tobias Wassmer}
\address{\tiny Tobias Wassmer\\University of Vienna\\Faculty of Mathematics\\
Oskar-Morgenstern-Platz 1\\A-1090 Wien\\Austria}
\email{\href{mailto:tobias.wassmer@univie.ac.at}
{tobias.wassmer@univie.ac.at}}

\begin{abstract}
  We study the Metropolis dynamics of the simplest mean-field spin glass
  model, the Random Energy Model. We show that this dynamics exhibits
  aging by showing that the properly rescaled time change process between
  the Metropolis dynamics and a suitably chosen `fast' Markov chain
  converges in distribution to a stable subordinator. The rescaling might
  depend on the realization of the environment, but we show that its
  exponential growth rate is deterministic.
\end{abstract}

%\keywords{}
%\subjclass[2010]{}
\date{\today}

\maketitle

%------------------------------------------------------------------------------
%------------------------------------------------------------------------------
%------------------------------------------------------------------------------
%------------------------------------------------------------------------------

\section{Introduction} %<<<1

This paper studies the out-of-equilibrium behavior of the Metropolis
dynamics on the Random Energy Model (REM). Our main goal is to answer one
of the remaining important open questions in the field, namely whether
this dynamics exhibits aging, and, if yes, whether its aging behavior
admits the usual description in terms of stable Lévy processes.

Aging is one of the main features appearing in the long-time behavior of
complex disordered systems (see e.g.~\cite{BCKM97} for a review). It was for
the first time observed experimentally in the anomalous relaxation patterns
of the residual magnetization of spin glasses (e.g.~\cite{LSNB83,Cha84}).
One of the most influential steps in the theoretical modeling of the aging
phenomenon is the introduction of the so-called trap models by Bouchaud
\cite{B92} and Bouchaud and Dean \cite{BD95}. These models, while being
sufficiently simple to allow analytical treatment, reproduce the
characteristic power law decay seen experimentally.

Since then a considerable effort has been made in putting the predictions
obtained from the trap models to a solid basis, that is to derive
these predictions from an underlying spin-glass dynamics. The first
attempt in this direction was made in \cite{BBG02,BBG03a,BBG03b} where it
was shown that, for a very particular Glauber-type dynamics, at time
scales very close to the equilibration, a well chosen two-point
correlation function converges to that given by Bouchaud's trap model.

With the paper \cite{bAC08}, where the same type of dynamics was studied
in a more general framework and on a broader range of time scales, it
emerged that aging establishes itself by the fact that scaling limits of
certain additive functionals of Markov chains are stable Lévy processes,
and that the convergence of the two-point correlation functions is just a
manifestation of the classical arcsine law for stable subordinators.

The Glauber-type dynamics used in those papers, sometimes called random
hopping time (RHT) dynamics, is however rather simple and is often
considered as `non-realistic', mainly because its transition rates do not
take into account the energy of the target state. Its advantage is that it
can be expressed as a time change of a simple random walk on the
configuration space of the spin glass, which allows for a certain
decoupling of the randomness of the dynamics from the randomness of the
Hamiltonian of the spin glass, making its rigorous studies more tractable.

For more realistic Glauber-type dynamics of spin glasses, like the
so-called Bouchaud's asymmetric dynamics or the Metropolis dynamics,
such decoupling is not possible. As a consequence, these dynamics are
far less understood.

Recently, some progress has been achieved in the context of the simplest
mean-field spin glass model, the REM. First, in \cite{MM12}, the Bouchaud's
asymmetric dynamics have been considered in the regime
where the asymmetry parameter tends to zero with the size of the system.
Building on the techniques started in \cite{Mou11}, this papers confirms
the predictions of Bouchaud's trap model in this regime.
Second, the Metropolis dynamics have been studied in \cite{Gay14},
for a truncated version of the REM, using the techniques developed for
the symmetric dynamics in \cite{Gay12,Gay10}, again confirming Bouchaud's
predictions.

The weak asymmetry assumption of \cite{MM12} and the truncation of
\cite{Gay14} have both the same purpose. They aim at overcoming some
specific features of the asymmetry and recovering certain features of
symmetric dynamics. Our aim in this work is to get rid of this
simplifications and treat the non-modified REM with the usual Metropolis
dynamics.

Let us also mention that Bouchaud's asymmetric dynamics (and implicitly
  the Metropolis one) is rather well understood in the context of trap
models on $\mathbb Z^d$, see \cite{BC11,Cer11,GS13}, where it is possible
to exploit the connections to the random conductance model with unbounded
conductances, \cite{BD10}. Finally, the Metropolis dynamics on the complete
graph was considered in \cite{Gay12}.

\medskip

Before stating our main result, let us briefly recall the general scheme for
proving aging in terms of convergence to stable Lévy processes. The actual
spin glass dynamics, $X=(X_t)_{t\geq0}$, which is reversible with respect to
the Gibbs measure of the Hamiltonian, is compared to another Markov chain
$Y=(Y_t)_{t\geq0}$ on the same space, which is an `accelerated' version of
$X$ and whose stationary measure is uniform.
The process $Y$ is typically easier to be understood, e.g.~it is a simple
random walk for the RHT dynamics, and the original process $X$ can be
written as its time change,
\begin{equation}
  \label{e:timechange}
  X(t)=Y(S^{-1}(t)),
\end{equation}
for the right continuous inverse $S^{-1}$ of a certain additive
functional $S$ of the Markov chain $Y$, called the `clock process'.
The aim is then to show convergence of the properly rescaled clock
process $S$ to an increasing stable Lévy process, that is to a stable
subordinator.

\medskip

We now state our main result. We consider the unmodified REM, as introduced in
\cite{D80,D81}. The state space of this model is the
$N$-dimensional hypercube $\HH_N=\{-1,1\}^N$, and its Hamiltonian is a
collection $(E_x)_{x\in\HH_N}$ of i.i.d.~standard Gaussian random variables
defined on some probability space $(\Omega, \cF,\PP)$. The non-normalized
Gibbs measure $\tau_x = e^{\beta\sqrt{N}E_x}$ at inverse temperature $\beta>0$
gives the equilibrium distribution of the system.

The Metropolis dynamics on the REM is the continuous-time Markov chain
$X=(X_t)_{t\geq0}$ on $\HH_N$ with transition rates
\begin{equation} \label{def:X}
  r_{xy}= \left(1\wedge \frac{\tau_y}{\tau_x}\right)\Ind{x\sim y},
  \qquad x,y\in \mathbb H_N.
\end{equation}
Here, $x\sim y$ means that $x$ and $y$ are neighbors on $\mathbb H_N$,
that is they differ in exactly one coordinate.

As explained above, we will compare the Metropolis chain $X$ with another
`fast' Markov chain $Y = (Y_t)_{t\ge 0}$ with transition rates
\begin{equation}\label{def:Y}
  q_{xy}= \frac{\tau_x\wedge\tau_y}
  {1\wedge\tau_x}\Ind{x\sim y},\qquad x,y\in \mathbb H_N.
\end{equation}
It can be easily checked using the detailed balance conditions that $Y$ is
reversible and that its
equilibrium distribution is
\begin{equation*}
  \nu_x=\frac{1\wedge\tau_x}{Z_N}, \qquad x\in \mathbb H_N,
\end{equation*}
where $Z_N= \sum_{x\in\HH_N}(1\wedge\tau_x)$.  Finally, since
$r_{xy} = (1\vee\tau_x)^{-1} q_{xy}$, $X$ can be written as a time change
of $Y$ as in \eqref{e:timechange} with the clock process $S$ being given
by
\begin{equation}
  \label{e:clock}
  S(t) = \int_0^t (1\vee\tau_{Y_s})ds.
\end{equation}
For the rest of the paper we only deal with the process $Y$ and the clock
process $S$, the actual Metropolis dynamics $X$ does not appear anymore after
this point.
For a fixed environment $\tau=(\tau_x)_{x\in\HH_N}$, let
$P^{\tau}_{\nu}$ denote the law of the process $Y$ started from its
stationary distribution $\nu $, and
let $D([0,T],\RR)$ be the space of $\RR$-valued cadlag functions on $[0,T]$.
We denote by $\beta_c = \sqrt {2 \log 2}$ the (static) critical
temperature of the REM.
Our main result is the following.

\begin{theorem}
  \label{thm:main}
  Let $\alpha\in(0,1)$ and $\beta>0$ be such that
  \begin{equation}\label{cond:alphabeta}
    \frac{1}{2} < \frac{\alpha^2\beta^2}{\beta_c^2} < 1,
  \end{equation}
  and define
  \begin{equation}
    \label{def:gn}
    g_N=e^{\alpha \beta^2 N}(\alpha\beta\sqrt{2\pi N})^{-\frac{1}{\alpha}}.
  \end{equation}
  Then there are random variables $R_N$ which depend on the environment
  $(E_x)_{x\in\HH_N}$ only, such that for every $T>0$ the rescaled clock processes
  \begin{equation*}
    S_N(t) = g_N^{-1}S(tR_N),\qquad t\in [0,T],
  \end{equation*}
  converge in $\PP$-probability as $N\to\infty$, in
  $P^{\tau}_{\nu}$-distribution on the space $D([0,T],\RR)$ equipped with
  the Skorohod $M_1$-topology, to an $\alpha$-stable subordinator
  $V_{\alpha}$. The random variables $R_N$ satisfy
  \begin{equation}
    \label{e:Rass}
    \lim_{N\to\infty}\frac{\log R_N}{N}  = \frac{\alpha^2 \beta^2}{2},
    \quad \PP\text{-a.s.}
  \end{equation}
\end{theorem}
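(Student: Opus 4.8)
The plan is to follow the strategy for proving aging of this type that was introduced in \cite{bAC08} and adapted to Metropolis-type dynamics in \cite{Gay12,Gay14}: realize $S_N$ as a sum, along a coarse time grid, of weakly dependent block increments, and prove convergence to $V_\alpha$ by verifying a general partial-sum criterion whose conclusion is the convergence of the point process of rescaled block increments to a Poisson point process with the Lévy measure of an $\alpha$-stable subordinator, i.e.\ (after absorbing constants into $g_N$ and $R_N$) the measure proportional to $u^{-1-\alpha}\,du$ on $(0,\infty)$. Because a macroscopic sojourn of $Y$ near a deep site makes the clock rise in a burst of several smaller increments — $Y$ returns to the site a geometric number of times, interspersed with short excursions, rather than staying put once — the limit lives on the Skorohod $M_1$-topology and not on $J_1$, and the criterion is set up to yield $M_1$-tightness. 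For a small $\varepsilon>0$, to be sent to $0$ only after $N\to\infty$, we write $\cT_N=\cT_N(\varepsilon)=\{x\in\HH_N:\tau_x\ge\varepsilon g_N\}$ for the set of \emph{deep traps}; the sites outside $\cT_N$ will contribute at most a $\PP$-a.s.\ bounded multiple of $\varepsilon^{1-\alpha}g_N$ to the clock \eqref{e:clock} on the time scale $R_N$, hence nothing in the limit $\varepsilon\downarrow0$, and it is here that $\alpha<1$ is used.

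The first block of work concerns the environment, and is where the hypotheses \eqref{cond:alphabeta} enter. With $\tau_x=e^{\beta\sqrt N E_x}$, a first-moment computation gives $|\cT_N|=e^{N(\log 2-\alpha^2\beta^2/2)+o(N)}$ $\PP$-a.s., exponentially large exactly because $\alpha^2\beta^2<\beta_c^2$; and linearizing $-E^2/2$ around $E\approx\alpha\beta\sqrt N$ and exponentiating — the prefactor $(\alpha\beta\sqrt{2\pi N})^{-1/\alpha}$ in \eqref{def:gn} is tuned precisely for this — yields $\#\{x\in\HH_N:\tau_x\ge u g_N\}=(2^N e^{-\alpha^2\beta^2 N/2})\,u^{-\alpha}(1+o(1))$ uniformly for $u$ in compacts of $(0,\infty)$, so the empirical law of the normalized depths $\{\tau_x/g_N:x\in\cT_N\}$ is asymptotically the power law of index $\alpha$ that produces $\alpha$-stability. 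The lower bound $\alpha^2\beta^2>\beta_c^2/2$, i.e.\ $\alpha^2\beta^2>\log 2$, is the second-moment condition: it forces, $\PP$-a.s., that no two deep traps lie within graph distance $2$ and that no deep trap has a neighbour with $E_y\ge c\sqrt N$ for any fixed $c>\sqrt{\beta_c^2-\alpha^2\beta^2}$ (and $c<\alpha\beta$ there), hence that the exit rate of $Y$ from a deep trap, $\lambda_x:=\sum_{y\sim x}q_{xy}=\sum_{y\sim x}(\tau_x\wedge\tau_y)=\sum_{y\sim x}\tau_y$ (using \eqref{def:Y} and $1\wedge\tau_x=1$), satisfies $\lambda_x=e^{o(N)}$ uniformly on $\cT_N$. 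Finally, using $(1\vee\tau_x)(1\wedge\tau_x)=\tau_x$ one gets $\EE^\tau_\nu\bigl[\int_0^{R_N}(1\vee\tau_{Y_s})\Ind{Y_s\notin\cT_N}\,ds\bigr]=\tfrac{R_N}{Z_N}\sum_{x\notin\cT_N}\tau_x$, and by the power-law profile the last sum is $\PP$-a.s.\ a bounded multiple of $\varepsilon^{1-\alpha}g_N\,(2^N e^{-\alpha^2\beta^2 N/2})$, giving the announced negligibility once $R_N$ is identified.

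The second block of work is dynamical. Fix a coarse-graining scale $\theta_N$, subexponential in $N$, chosen larger than the ($\PP$-a.s.\ subexponential) mixing time of $Y$ on $\HH_N\setminus\cT_N$ and than the duration of a single macroscopic visit to a deep trap, but with $\theta_N\ll R_N$; write $S(tR_N)=\sum_{k\le tR_N/\theta_N}Z_k$ with $Z_k$ the clock increment over the $k$-th block. Off the deep traps $Y$ is ``fast'' — every exit rate is of order at least $N$, $\PP$-a.s., so $Y$ restricted to $\HH_N\setminus\cT_N$ equilibrates subexponentially and the isolated deep traps do not obstruct this — so within a block $Y$ mixes long before it can reach another deep trap, and $Z_k$ is, up to negligible error, the clock collected during at most one macroscopic visit to a deep trap. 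A visit to $x\in\cT_N$ is a geometric number of micro-visits (mean $p_x^{-1}$, where $p_x$ is the escape probability from the neighbourhood of $x$), each contributing $\tfrac{\tau_x}{\lambda_x}\,\mathrm{Exp}(1)$ to the clock; summing, a visit contributes $c_x\tau_x\,\mathbf e$ with $\mathbf e\sim\mathrm{Exp}(1)$ and $c_x=(p_x\lambda_x)^{-1}=e^{o(N)}$ $\PP$-a.s. Hence the quenched Laplace transform factorizes, up to $o(1)$ in $\PP$-probability, as
\[
\EE^\tau_\nu\!\left[e^{-\lambda S_N(t)}\right]
=\exp\!\left(-\,t\,\frac{R_N}{Z_N}\sum_{x\in\cT_N}\lambda_x\Bigl(1-\tfrac{1}{\,1+\lambda c_x\tau_x/g_N\,}\Bigr)+o(1)\right),
\]
and, by the power-law profile of the $\tau_x/g_N$, the sum over $\cT_N$ equals $\lambda^\alpha$ times an environment-dependent prefactor $K_N$ whose logarithm, divided by $N$, converges $\PP$-a.s.\ to $\log 2-\alpha^2\beta^2/2$. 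Defining $R_N$ so that it cancels $K_N$ together with the deterministic $\alpha$-dependent constant coming from $\int_0^\infty\frac{u^{-\alpha}}{1+u}\,du$ — a choice visibly measurable with respect to $(E_x)_{x\in\HH_N}$ — makes the right-hand side converge to $e^{-t\lambda^\alpha}$ and forces $\log R_N/N\to\log 2-(\log 2-\alpha^2\beta^2/2)=\alpha^2\beta^2/2$ $\PP$-a.s., which is \eqref{e:Rass}. The remaining hypotheses of the criterion — convergence of $\tfrac{R_N}{\theta_N}P^\tau_\nu(Z_1>u g_N)$ to $c_\alpha u^{-\alpha}$ in $\PP$-probability, the truncated bound $\tfrac{R_N}{\theta_N}\EE^\tau_\nu[(Z_1/g_N)^2\Ind{Z_1\le\delta g_N}]\le c_\alpha'\,\delta^{2-\alpha}+o(1)$, and the negligibility of blocks meeting more than one deep trap (where the isolation of $\cT_N$ is decisive, both here and for the cross-terms in the second moment) — are checked with the same ingredients, and one concludes after passing $\varepsilon\downarrow0$, which fills in the small jumps.

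The main obstacle is the quenched control of the excursions of $Y$ \emph{around} and \emph{between} the deep traps. Around a deep trap one needs a description of the escape from its neighbourhood precise enough to identify the law of $c_x$ — in particular that $c_x=e^{o(N)}$ and that $\{c_x^\alpha:x\in\cT_N\}$ has a $\PP$-a.s.\ bounded average, so it only renormalizes $K_N$ — and to justify collapsing a whole macroscopic visit into a single exponential increment; between deep traps one needs a quenched mixing (regeneration) statement for $Y$ on $\HH_N\setminus\cT_N$ strong enough to treat successive visited traps as independent while keeping all errors $o(g_N)$ on the scale $R_N$, which requires ruling out that the numerous medium-depth sites form an exponentially long bottleneck. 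These two points carry the bulk of the difficulty; the environmental estimates of the first block, the verification of the moment conditions, the $M_1$-tightness, and the limit $\varepsilon\downarrow0$ then proceed along by-now-standard lines, cf.~\cite{bAC08,Gay12}.
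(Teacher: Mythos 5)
Your proposal follows the classical bAC08/Gayrard route: coarse-grained block increments, quenched identification of the escape constants $c_x=(p_x\lambda_x)^{-1}$, and a Poisson point process / partial-sum criterion. This is exactly the route the paper deliberately does not take, and along the way there is a concrete error. From $\gamma=\alpha^2\beta^2/\beta_c^2>1/2$ one indeed gets that $\PP$-a.s.~no deep trap has a neighbour with $E_y\geq c\sqrt N$ for any fixed $c>\sqrt{\beta_c^2-\alpha^2\beta^2}$, but this does \emph{not} give $\lambda_x=\sum_{y\sim x}\tau_y=e^{o(N)}$ uniformly on $\cT_N$. The set $\cT_N$ has of order $2^{(1-\gamma)N}\varepsilon^{-\alpha}$ elements, so the maximum of $E_y$ over the roughly $N\,2^{(1-\gamma)N}\varepsilon^{-\alpha}$ neighbours of deep traps is $\PP$-a.s.~of order $\sqrt{2(1-\gamma)\log 2}\,\sqrt N=\sqrt{\beta_c^2-\alpha^2\beta^2}\,\sqrt N$ --- matching exactly the upper bound you derived --- and hence the worst $\lambda_x$ is of genuinely exponential order $e^{\beta\sqrt{\beta_c^2-\alpha^2\beta^2}\,N(1+o(1))}$. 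Your deduction $c_x=e^{o(N)}$ therefore does not follow as written; an exponentially large neighbour $\tau_y$ makes the escape probability $p_x$ exponentially small, and showing that the product $p_x\lambda_x$ nonetheless stays sub-exponential requires a quenched network-reduction argument near each deep trap that you neither supply nor cite.

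More fundamentally, the ``main obstacle'' you flag at the end --- a quenched, $e^{o(N)}$-accurate law for $c_x$ plus a regeneration structure between successive deep traps --- is precisely the difficulty that \cite{Gay14} resolves only after truncating the Hamiltonian, that \cite{MM12} resolves only in the weakly-asymmetric regime, and that this paper's method is designed to circumvent. The paper's key observation, absent from your proposal, is that because $q_{xy}=(\tau_x\wedge\tau_y)/(1\wedge\tau_x)$ and the deep traps (taken with the lower threshold $g'_N\ll g_N$, so that $\cD_N$ is both larger than your $\cT_N$ and still $\PP$-a.s.~isolated) exceed all of their neighbours, the law of the fast chain $Y$ is measurable with respect to $\cG=\sigma(\xi,\underline E)$ and hence \emph{independent} of the deep energies $\overline E_x$. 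Conditioning on $\cG$ and averaging over $\overline E_x$ then collapses the Laplace transform of $g_N^{-1}S_{\cD}(tR_N)$ into the single local-time functional $L_N(t)=2^{(\gamma'-\gamma)N}\sum_{x\in\cD_N}\ell_{tR_N}(x)^\alpha$, and everything reduces to proving $L_N(t)\to t$; the paper does this with only a polynomial bound on the mixing time of $Y$, rough two-sided bounds on $E^\tau_\nu[H_x]$, and a definition of $R_N$ tuned so that $E^\tau_\nu[L_N(t)]\approx t$. No per-trap control of $c_x$, no Green-function asymptotics, and no regeneration decomposition are needed. Without that observation your scheme would still have to solve the quenched Green-function problem for the untruncated Metropolis REM --- which is what made this problem open.
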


\noindent Let us make a few remarks on this result.

1. The result of Theorem~\ref{thm:main} confirms that the predictions of
Bouchaud's trap model hold for the Metropolis dynamics on the REM, at
least at the level of scaling limits of clock processes. It also compares
directly to the results obtained for the symmetric (RHT) dynamics in
\cite{bAC08}. The scales $g_N$ and $R_N$ are (up to sub-exponential
  prefactors) the same as previously, including the condition
\eqref{cond:alphabeta} or the range of parameters $\alpha ,\beta $. As in
\cite{bAC08}, the right inequality in \eqref{cond:alphabeta} is completely
natural, beyond it $Y$ `feels' the finiteness of $\HH_N$ and aging is not
expected to occur. The left inequality in \eqref{cond:alphabeta} is
technical, it ensures that the relevant deep traps are well separated
(cf.~\lemref{lem:sparsetop}), introducing certain simplifications in the
proof.
We believe that this bound might be improved to
$\alpha^2 \beta^2 /\beta_c^2 > 0$, by further exploiting our
method.
Finally, as previously, note
that \eqref{cond:alphabeta} is satisfied also for $ \beta<\beta_c$ for
appropriate $\alpha$, hence aging can occur above the critical
temperature.

\smallskip

2. Our choice of the fast chain $Y$ is rather unusual. In view of the previous
papers \cite{MM12,BC11}, it would be natural to take instead the `uniform
chain' $\tilde Y$ with transition rates $\tau_x\wedge\tau_y$, that is without the correction
$1\wedge\tau_x$ which appears in \eqref{def:Y}. This chain is reversible with respect to the uniform
distribution on $\mathbb H_N$.
This choice has, however, some deficiencies. On the heuristic level,
$\tilde Y$ is not an acceleration of $X$, since it is much slower than $X$ on
sites with very small Gibbs measure $\tau_x\ll 1$. These sites, which are
irrelevant for the statics, then `act as
traps' on $\tilde Y$, making them relevant for the dynamics, which is
undesirable. On the technical level, the trapping on sites with small
Gibbs measure has the consequence that the
mixing time of $\tilde Y$ is very large.

Our choice of the fast chain $Y$ runs as fast as $X$ on
the sites with small Gibbs measure and thus does not have this deficiency.
Moreover, since
$\nu_x=Z_N^{-1}$ whenever $E_x\geq0$, the equilibrium distribution of the fast chain $Y$ is still uniform on
the relevant deep traps, so the clock process $S$ retains its usual
importance for aging.

Remark also that in order to overcome the similar difficulties, \cite{MM12}
truncate the Hamiltonian of the REM at $0$ which effectively sets
$\tau_x\ge 1$ for all $x\in \mathbb H_N$.
We prefer to retain the full REM and use the modified fast chain $Y$
instead. Finally, \cite{Gay14} uses the discrete skeleton of
$X$ as the base chain, which has some interesting features but introduces
similar undesirable effects.

\smallskip

3. We view \thmref{thm:main} as an aging statement, without further
considering any two-point correlation functions. Actually, it seems hard
to derive aging statements for the usual correlation functions from our
result without extending the paper considerably. Such derivation usually
requires some knowledge of the fast chain $Y$ that goes over the $M_1$-convergence
of the clock processes. This knowledge is typically automatically
obtained in the previous approaches. The strength (or the weakness) of
our method is that we do not need to obtain such finer knowledge to show
the clock process convergence.

\smallskip

4. A rather unusual feature of Theorem~\ref{thm:main} is the fact that the
scaling $R_N$ is random, it depends on the random environment. This again a
consequence of our technique.
Claim \eqref{e:Rass} in \thmref{thm:main} however shows that at
least the exponential growth of $R_N$ is deterministic.
The random scale $R_N$ is explicitly defined in \eqref{def:rN}. We will
see that its definition depends on a somewhat free choice of an auxiliary
parameter, but nevertheless the final result does not depend on this
parameter. Not only this property makes us conjecture that $R_N$ should
actually satisfy a deterministic law of large numbers,
\begin{equation*}
  \lim_{N\to \infty} h(N) e^{-\alpha^2 \beta^2N /2 } R_N =1, \qquad
  \mathbb P\text{-a.s.},
\end{equation*}
for some function $h(N)$ growing at most sub-exponentially.

\smallskip

5. The mode of convergence in \thmref{thm:main} is not optimal, one would rather
like to obtain the convergence in $P^{\tau}_{\nu}$-distribution for $\PP$-almost
every environment, which is usually called `quenched' convergence.
Actually, Theorem~\ref{thm:main} can be strengthened
slightly to a statement which is somewhere between $\mathbb P$-a.s.~convergence
and convergence in $\mathbb P$-probability. Namely,
the statement holds for a.e.~realization
of sites with `small' $\tau_x$, but only in probability over sites with
'large' $\tau_x$, cf.~Remark~\ref{rem:convmode}.

\smallskip

6. Our proof of Theorem~\ref{thm:main} strongly exploits the
i.i.d.~structure of the Hamiltonian of the REM. At present we do not know
if it is possible to combine our techniques with those used for the RHT
dynamics of the $p$-spin model in \cite{BBC09,BG12}.

%As a final remark, note that we can always obtain $\PP$-a.s.~convergence
%along subsequences, as follows. Consider a distance $d$ on the space
%$\cM_1(D)$ of probability measures on $D([0,T],\RR)$ that metrizes convergence
%in distribution with respect to the Skorohod $M_1$-topology. Let
%$P_N=P^{\tau}_{\nu}[g_N^{-1}S(tR_N)\in\,\cdot\,] \in \cM_1(D)$ be the
%sequence of random measures (which are defined on the probability space
%$(\Omega,\cF,\PP)$) and $P_{\alpha}\in \cM_1(D)$ the constant law of the
%limiting $\alpha$-stable subordinator. \thmref{thm:main} states that for
%every $\varepsilon>0$, $\PP[d( P_N,P_{\alpha})>\varepsilon]\to 0$ as
%$N\to\infty$. Choosing $N_k$ such that
%$\PP[d(P_{N_k},P_{\alpha})>\frac{1}{k}]\leq \frac{1}{k^2}$ and
%$N_k\to\infty$ as $k\to\infty$, the Borel-Cantelli
%lemma implies $\PP$-a.s.~convergence along the subsequence $(N_k)_{k\geq1}$.

\medskip

We proceed by commenting on the proof of \thmref{thm:main}, concentrating
mainly on its novelties. The general strategy so far to prove such a
result has been to first reduce the problem to the clock process
restricted to a set of deep traps which govern the behavior of the
original clock
process. The different methods then all more or less aim at dividing the
contribution of consequently found deep traps into essentially
i.i.d.~blocks. For example in \cite{bAC08} or \cite{BC11}, this is achieved by
controlling the hitting probabilities of deep traps, proving that they are
hit essentially uniformly in exponentially distributed times, and
controlling the time the chain spends at the deep traps by a sharp
control of the Green function. Similar rather precise estimates on
hitting probabilities and/or Green function are necessary in other
approaches. Using this i.i.d.~structure, one can then show convergence of
the clock process by standard methods, e.g.~computing the Laplace transform.

The method used in this paper is slightly inspired by the general
approach taken in \cite{FM14} and \cite{CW14}. There, models of trapped
random walks on $\ZZ^d$ are considered where few information about the
discrete skeleton as well as the waiting times of a continuous-time
Markov chain  are available, and minimal necessary conditions for
convergence of the clock process are found. Taking up this idea, instead
of analyzing in detail the behavior of the fast chain $Y$, we extract the
minimal amount of information needed to show convergence of the clock
process. In particular, we do not need any exact control of hitting
probabilities and Green functions of deep traps, as most previous work did.

The first step in our proof is standard, namely that the main
contribution to the clock process comes from a small set of vertices with
large Gibbs measure $\tau_x$, the so-called deep traps, and that in fact
the clock process of the deep traps converges to a stable subordinator.
Denote the set of deep traps by $\cD_N$ (see \secref{sec:defs} for
details). We will show that the clock process $S$ can be well approximated
by the `clock process of the deep traps'
\begin{equation}
  \label{e:SD}
  S_{\cD}(t) = \int_0^{t} (1\vee\tau_{Y_s})\Ind{Y_s\in \cD_N}ds.
\end{equation}
Then it remains to show that in fact $g_N^{-1}S_{\cD}(tR_N)$ converges to a
stable subordinator.

To this end, we will in some sense invert the standard procedure described
above. Instead of approximating the clock process by an i.i.d.~block
structure and then use the Laplace transform to show convergence, we will
first compute a certain conditional Laplace transform using some special
properties of the Metropolis dynamics. Then we analyze what is actually
needed in order to show convergence of the unconditional Laplace transform.

A bit more detailed, this will be done as follows. Under condition
\eqref{cond:alphabeta}, the deep traps are almost surely well separated.
This fact and the fact that the definition \eqref{def:Y} contains the factor
$\tau_x\wedge \tau_y$ imply that the transition rates $q_{xy}$ of the
fast chain $Y$ do not depend on the energies $E_x$ of the deep
traps, but only on their location. Therefore, one can condition on the
location of all traps and the energies $E_x$ of the non-deep traps,
which determines the law $P^{\tau}_{\nu}$ of $Y$, and take the expectation over the energies of
the deep traps. We call this a `quasi-annealed' expectation, and denote
it by $\EE_{\cD}$ for the moment. Let $\ell_  {t}(x)$ denote the local
time of the fast chain $Y$ (see \secref{sec:defs} for details). As
$\EE_{\cD}$ is simply an expectation over i.i.d.~random variables, the
quasi-annealed Laplace transform of the rescaled clock process of the
deep traps given $Y$ can be computed. It essentially behaves like
\begin{equation} \label{eq:quasiannealed}
  \EE_{\cD}\big[e^{-\lambda \frac{1}{g_N}S_{\cD}(tR_n)}\mid Y \big]
  \approx \exp\bigg\{-\cK \lambda^{\alpha} \varepsilon_N \sum_{x\in \cD_N}
    \ell_{tR_N}(x)^{\alpha}\bigg\}.
\end{equation}
Here, $\varepsilon_N$ is a deterministic sequence tending to 0 as $N\to\infty$.
The above approximation shows that the only object related to $Y$ we have to control is the
local-time functional $\varepsilon_N \sum_{x\in \cD_N}\ell_{tR_N}(x)^{\alpha}$.

We will show that this a priori non-additive functional of $Y$ actually behaves in
an additive way, namely that it converges to $t$ as $N\to\infty$, under
$P^{\tau}_{\nu}$ for $\PP$-a.e.~environment $\tau$. For this convergence to
hold it is sufficient to have some weak bounds on the mean hitting time of deep traps
as well as some control on the mixing of the chain $Y$ together with an
appropriate choice of the scale $R_N$ that depends on the environment.

Using standard methods we then strengthen the quasi-annealed convergence
to quenched convergence (in the sense of
\thmref{thm:main}).

\medskip

To conclude the introduction, let us comment on how our method might be extended.
The key argument in the computation of the quasi-annealed Laplace
transform, namely the fact that the chain $Y$ is independent of the depth of
the deep traps, seems very specific for the Metropolis dynamics. However,
by adapting the method appropriately and using network reduction techniques,
we believe that one could also treat Bouchaud's asymmetric dynamics and
Metropolis dynamics in the regime where the left-hand side inequality of
\eqref{cond:alphabeta} fails, i.e.~there are neighboring deep traps.

\medskip

The rest of the paper is structured as follows. Detailed definitions
and notations used through the paper are introduced in \secref{sec:defs}. In
\secref{sec:mix} we analyze the mixing properties of the fast chain $Y$,
which will be crucial at several points later.  In
\secref{sec:meanhit} we give bounds on the mean hitting time of deep traps
and on the normalizing scale $R_N$. Using these bounds and the results on the
mixing of $ Y$, we show concentration of the local time functional
$\varepsilon_N\sum_{x \in\cD_N}\ell_{tR_N}(x)^{\alpha}$ in
\secref{sec:localtimes}. We prove convergence of the rescaled clock process
of the deep traps in \secref{sec:deep} with the above mentioned computation
of the quasi-annealed Laplace transform, using the concentration of the local
time functional. Finally, we treat the shallow traps in \secref{sec:shallow}
by showing that their contribution to the clock process can be neglected. In
Appendix~\ref{appendix}  we give the proof of a technical result which is used to
bound the expected hitting times in \secref{sec:meanhit}.

%------------------------------------------------------------------------------
%------------------------------------------------------------------------------
%------------------------------------------------------------------------------
%------------------------------------------------------------------------------

\section{Definitions and notation} %<<<1
\label{sec:defs}

In this section we introduce some notation used through the paper and
recall a few useful facts. We use
$\mathbb H_N$ to denote the $N$-dimensional hypercube $\{-1,1\}^N$
equipped with the usual distance
\begin{equation*}
  d(x,y)=\frac{1}{2}\sum_{i=1}^N |x_i-y_i|,
\end{equation*}
and write $\mathcal E_N$ for the set of nearest-neighbor edges
$\cE_N = \{\{x,y\}:~d(x,y)=1\}$.

For given parameters $\alpha$ and $\beta$,  let
\begin{equation}
  \label{e:gamma}
  \gamma=\frac{\alpha^2 \beta^2}{\beta_c^2}\in ( 1/2, 1),
\end{equation}
by condition \eqref{cond:alphabeta} in \thmref{thm:main}.

Recall from the introduction that $(E_x:x\in\HH_N,N\ge 1)$, is a
family of i.i.d.~standard Gaussian random variables defined on some
probability space $(\Omega,\cF,\PP)$. Note that we do not denote the
dependence on $N$ explicitly, but we assume that the space
$(\Omega , \mathcal F, \mathbb P)$ is the same for all $N$. For $\beta>0$ the
non-normalized Gibbs factor $\tau_x$ is given by
$\tau_x=e^{\beta\sqrt{N}E_x}$.

Using the standard  Gaussian tail approximation,
\begin{equation}
  \label{eq:gaussapprox}
  \PP[E_x\geq t] = \frac{1}{t\sqrt{2\pi} }\ e^{-{t^2}/{2}}\big(1+o(1)\big)
  \quad \text{as }t\to\infty,
\end{equation}
we obtain that $g_N$, as defined in \thmref{thm:main}, satisfies
\begin{equation*}
  \label{e:tautail}
  \PP[\tau_x>ug_N] = u^{-\alpha} 2^{-\gamma N}\big(1+o(1)\big).
\end{equation*}
This heuristically important computation explains the appearance of stable
laws in the distribution of sums of $\tau_x$: If we
observe $2^{\gamma N}$ vertices, then finitely many of them have their
rescaled Gibbs measures $\tau_x/g_N$ of order unity, and, moreover,
those rescaled Gibbs measures behave like random variables in the domain
of attraction of an $\alpha$-stable law.

Recall also that $Y=(Y_t)_{t\ge 0}$ stands for the fast Markov chain whose
transition rates $q_{xy}$ are given in \eqref{def:Y}, and that
$\nu = (\nu_x)_{x\in \mathbb H_N}$ denotes the invariant distribution of
this chain,  $\nu_x=\frac{1\wedge\tau_x}{Z_N}$.
For a given environment $\tau=(\tau_x)_{x\in\HH_N}$, let
$P^{\tau}_x$ and $P^{\tau}_{\nu}$ denote the laws of $Y$ started from a vertex
$x$ or from $\nu $ respectively, and $E^{\tau}_x$, $E^{\tau}_{\nu}$
the corresponding expectations.

Note that the normalization factor
$Z_N = \sum_{x\in \mathbb H_N} (1\wedge\tau_x)$ satisfies, for every
constant $\kappa \in (0,1/2)$,
\begin{equation}
  \label{e:boundzN}
  \kappa2^N \leq Z_N \leq 2^N \qquad \mathbb P\text{-a.s for $N$
    large enough}.
\end{equation}
Indeed, obviously $Z_N \leq 2^N$, and
$Z_N \geq \sum_{x\in\HH_N} \Ind{E_x\geq0}$. But $\Ind{E_x\geq0}$ are
i.i.d.~Bernoulli random variables, therefore the statement follows
immediately by the law of large numbers.

An important role in the study of properties
of $Y$ is played by the conductances defined by
\begin{equation} \label{def:cond}
  c_{xy} = \nu_x q_{xy} =  \frac{\tau_x\wedge \tau_y}{Z_N} \qquad\text{for }x\sim y.
\end{equation}

Let $\theta_s$ be the left shift on the space of trajectories of $Y$, that
is
\begin{equation}
  \label{e:theta}
  (\theta_sY)_t=Y_{s+t}.
\end{equation}
Let $H_x = \inf\{t>0:~Y_t=x\}$ be the hitting time of $x$ by $Y$,  $J_1$
the time of the first jump of $Y$, and let
$H^+_x = H_x\circ \theta_{J_1} + J_1 = \inf\{t>J_1:~Y_t=x\}$
be the return time to $x$ by $Y$.
Similarly define $H_A$ and $H^+_A$ for a set
$A \subset\HH_N$. The local time $\ell_t(x)$ of $Y$ is given by
\begin{equation*}
  \ell_t(x) = \int_0^t\Ind{Y_s=x}ds.
\end{equation*}
Using this notation the clock process $S$ introduced in \eqref{e:clock} can
be written as
\begin{equation*}
  S(t) = \int_0^t (1\vee\tau_{Y_s})ds = \sum_{x\in\HH_N}
  \ell_t(x) (1\vee\tau_x).
\end{equation*}

To define the set of deep traps $\cD_N$ and the random scale $R_N$
mentioned in the introduction we introduce a few additional parameters.
For $\alpha\in(0,1)$, $\beta>0$  as in
\thmref{thm:main} and
$\gamma$ as in \eqref{e:gamma}, we fix $\gamma '$ and $\alpha '$ such
that
\begin{equation}
  \label{e:gammarange}
  \frac{1}{2}<\gamma'<\gamma, \quad \text{and} \quad
  \alpha' = \frac{\beta_c}{\beta} \sqrt{\gamma'}.
\end{equation}
An explicit choice of $\gamma'$ will be made later in
\secref{sec:localtimes}.
We define the auxiliary scale
\begin{equation*}
  g'_N = e^{\alpha' \beta^2 N}
  (\alpha'\beta\sqrt{2\pi N})^{-\frac{1}{\alpha' }},
\end{equation*}
and set
\begin{equation*}
  \cD_N=\{x\in\HH_N:~\tau_x \geq g'_N\}.
\end{equation*}
to be the set of deep traps. By the Gaussian tail approximation
\eqref{eq:gaussapprox} it follows that the density of $\cD_N$ satisfies
\begin{equation}
  \label{eq:density}
  \PP[x\in \cD_N] = 2^{-\gamma' N}(1+o(1)).
\end{equation}

We quote the following observation on the size and sparseness of $\cD_N$.
The sparseness will play a key role in our computation of the
quasi-annealed Laplace transform in \secref{sec:deep}.
\begin{lemma}{\cite[Lemma~3.7]{bAC08}}
  \label{lem:sparsetop}
  For every $\varepsilon>0$,
  $\PP$-a.s.~for $N$ large enough,
  \begin{equation} \label{eq:sizetop}
    |\cD_N|2^{(\gamma'-1)N} \in (1-\varepsilon,1+\varepsilon).
  \end{equation}
  Moreover, since $\gamma'>1/2$, there exists $\delta>0$ such that
  $\PP$-a.s.~for $N$ large enough, the separation event
  \begin{equation} \label{eq:disttop}
    \mathscr{S}=\left\{\min\{d(x,y):~x,y\in \cD_N\} \geq \delta N\right\}
  \end{equation}
  holds.
\end{lemma}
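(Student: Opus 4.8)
\textbf{Proof proposal for \lemref{lem:sparsetop}.}

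The plan is to prove both statements by a first- and second-moment argument on i.i.d.\ Bernoulli-type indicators, exploiting the Gaussian tail estimate \eqref{eq:density}, together with a union bound for the separation event. For the size estimate \eqref{eq:sizetop}, write $|\cD_N| = \sum_{x\in\HH_N}\Ind{x\in\cD_N}$, which is a sum of $2^N$ i.i.d.\ Bernoulli random variables with success probability $p_N := \PP[x\in\cD_N] = 2^{-\gamma'N}(1+o(1))$ by \eqref{eq:density}. Hence $\EE|\cD_N| = 2^N p_N = 2^{(1-\gamma')N}(1+o(1))$, which grows exponentially since $\gamma'<1$. A Chebyshev (or Chernoff) bound then gives $\PP\big[\,\big||\cD_N| - \EE|\cD_N|\big| > \tfrac{\varepsilon}{2}\EE|\cD_N|\,\big] \le \frac{\Var|\cD_N|}{(\varepsilon/2)^2(\EE|\cD_N|)^2} \le \frac{4}{\varepsilon^2 \EE|\cD_N|} = O\big(2^{-(1-\gamma')N}\big)$, which is summable in $N$. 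By Borel--Cantelli, $\PP$-a.s.\ for $N$ large enough one has $\big||\cD_N| - \EE|\cD_N|\big| \le \tfrac{\varepsilon}{2}\EE|\cD_N|$, and since $\EE|\cD_N|2^{(\gamma'-1)N}\to 1$, this yields \eqref{eq:sizetop} after adjusting $\varepsilon$.

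For the separation event $\mathscr S$, the plan is a union bound over pairs of nearby vertices. Fix $\delta>0$ small, to be chosen. The complementary event is that there exist $x,y\in\cD_N$ with $d(x,y)<\delta N$. By a union bound,
\begin{equation*}
  \PP[\mathscr S^c] \le \sum_{x\in\HH_N}\ \sum_{\substack{y:\ 1\le d(x,y)< \delta N}} \PP[x\in\cD_N,\ y\in\cD_N].
\end{equation*}
Since $E_x$ and $E_y$ are independent for $x\ne y$, the joint probability factorizes as $\PP[x\in\cD_N]\PP[y\in\cD_N] = p_N^2 = 2^{-2\gamma'N}(1+o(1))$. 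The number of vertices $y$ within distance $\delta N$ of a fixed $x$ is $\sum_{k=0}^{\lfloor\delta N\rfloor}\binom{N}{k} \le 2^{N H(\delta)(1+o(1))}$, where $H(\delta) = -\delta\log_2\delta - (1-\delta)\log_2(1-\delta)$ is the binary entropy, which satisfies $H(\delta)\to 0$ as $\delta\to 0$. Therefore
\begin{equation*}
  \PP[\mathscr S^c] \le 2^N \cdot 2^{N H(\delta)(1+o(1))}\cdot 2^{-2\gamma'N}(1+o(1)) = 2^{N(1 + H(\delta) - 2\gamma' + o(1))}.
\end{equation*}
Since $\gamma'>1/2$, we have $1 - 2\gamma' < 0$, so we may choose $\delta>0$ small enough that $1 + H(\delta) - 2\gamma' < 0$; then $\PP[\mathscr S^c]$ decays exponentially and is summable, and Borel--Cantelli gives that $\mathscr S$ holds $\PP$-a.s.\ for $N$ large enough.

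The only genuinely delicate point is the interplay between the entropy cost $H(\delta)$ of counting nearby vertices and the constraint $\gamma'>1/2$: this is precisely why the hypothesis $\gamma'>1/2$ (equivalently, the left inequality in \eqref{cond:alphabeta} after the choice \eqref{e:gammarange}) is needed, and it explains the remark after the theorem that this inequality is technical. Everything else is routine second-moment and Borel--Cantelli bookkeeping; in fact, as the attribution indicates, this is exactly \cite[Lemma~3.7]{bAC08}, so one may simply cite it, but the argument above reproduces it for completeness.
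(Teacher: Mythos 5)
Your proof is correct and is essentially the argument behind \cite[Lemma~3.7]{bAC08}, which the paper simply cites without reproducing: a Chebyshev plus Borel--Cantelli second-moment bound for the cardinality, and a first-moment union bound over pairs within Hamming distance $\delta N$ controlled by the binary entropy for the separation event. The only cosmetic point is that the union bound double-counts each unordered pair and overcounts by including $k=0$, but both are harmless for an upper bound; your identification of $\gamma'>1/2$ as exactly what is needed to beat the entropy cost $H(\delta)$ is the right reading of why the left inequality in \eqref{cond:alphabeta} appears.
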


Finally, for the sake of concreteness, let us give the explicit form of the
random scale $R_N$,
\begin{equation} \label{def:rN}
  R_N = 2^{(\gamma-\gamma')N}\left(\sum_{x\in \cD_N}
    \frac{E^{\tau}_x[\ell_{\tx}(x)^{\alpha}]}{E^{\tau}_{\nu}[H_x]}\right)^{-1},
\end{equation}
where $\tx$ denotes the mixing time of $Y$, a randomized stopping time
which we will construct in \secref{sec:mix}.
The reason for this definition will become apparent when we prove the
concentration of the local time functional mentioned in the introduction.
Although the definition of $R_N$ seems arbitrary by the somewhat free choice
of the parameter $\gamma'$, \thmref{thm:main} actually shows that
asymptotically $R_N$ will be independent of $\gamma'$.

\medskip

For the rest of the paper, $c,c',c''$ will always denote positive constants
whose values may change from line to line. We will use the notation $g=o(1)$
for a function $g(N)$ that tends to $0$ as $N\to\infty$, and $g=O(f)$ for a
function $g(N)$ that is asymptotically at most of order $f(N)$,
i.e.~$\lim_{N\to\infty}|g(N)|/f(N)\leq c$, for some $c>0$.

%------------------------------------------------------------------------------
%------------------------------------------------------------------------------
%------------------------------------------------------------------------------
%------------------------------------------------------------------------------

\section{Mixing properties of the fast chain} %<<<1
\label{sec:mix}

The fact that the chain $Y$ mixes fast, namely on a scale polynomial in $N$,
plays a crucial role in many of our arguments. In this section we
analyze the mixing behavior of $Y$. We first give a lower bound on the
spectral gap $\lambda_Y$ of $Y$, which we then use to construct a strong
stationary time $\tx$.

\begin{proposition}
  \label{prop:spectralgap}
  There are constants $\kappa>0$, $K>0$, $C_0>0$, such that
	$\PP$-a.s.~for $N$ large enough,
  \begin{equation*}
    \lambda_Y \geq \frac{\kappa}{4} N^{-K-1-\beta C_0}.
  \end{equation*}
\end{proposition}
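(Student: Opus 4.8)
The plan is to bound the spectral gap $\lambda_Y$ from below using a standard comparison (Poincaré-type / canonical-paths) argument against a well-understood reference chain on $\HH_N$, combined with the a.s.\ bounds on the environment. The natural reference is the simple random walk $W$ on $\HH_N$ (equivalently, the product chain), whose spectral gap is $2/N$, a classical fact. Writing the Dirichlet form of $Y$ as $\cE_Y(f,f) = \tfrac12\sum_{x\sim y}c_{xy}(f(x)-f(y))^2$ with $c_{xy}=(\tau_x\wedge\tau_y)/Z_N$ from \eqref{def:cond}, and the Dirichlet form of $W$ as $\cE_W(f,f)=\tfrac12\sum_{x\sim y}2^{-N}N^{-1}(f(x)-f(y))^2$, one sees the edge weights differ only through the factor $(\tau_x\wedge\tau_y)$ (and the harmless ratio $Z_N^{-1}$ versus $2^{-N}$, controlled by \eqref{e:boundzN}). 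Since $W$ uses the \emph{same} edge set as $Y$, no path-routing is needed: it suffices to compare edge weights one edge at a time, so I would avoid the full canonical-paths machinery and instead directly compare Dirichlet forms and stationary measures.

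The concrete steps: (i) For the stationary measures, note $\nu_x = (1\wedge\tau_x)/Z_N$ and the reference measure is uniform $2^{-N}$; by \eqref{e:boundzN}, $\nu_x/2^{-N} = 2^N(1\wedge\tau_x)/Z_N$, so I need a $\PP$-a.s.\ upper bound on $\max_x(1\wedge\tau_x)\le 1$ (trivial) and a lower bound — but since only an \emph{upper} bound on $\nu_x$ relative to uniform is needed for one direction of the comparison and the variance comparison goes the other way, I will track both using $\kappa 2^N\le Z_N\le 2^N$ and $1\wedge\tau_x\le 1$. (ii) For the Dirichlet forms, on each edge $\{x,y\}$ the ratio of weights is $2^N Z_N^{-1}(\tau_x\wedge\tau_y)$, and I need a $\PP$-a.s.\ lower bound on $\min_{x\sim y}(\tau_x\wedge\tau_y) = \min_x \tau_x = e^{\beta\sqrt N\,\min_x E_x}$. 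By a union bound over the $2^N$ vertices and the Gaussian tail, $\min_x E_x \ge -C_0\sqrt N$ for a suitable $C_0$ (taking $C_0 > \sqrt{2\log 2}$), $\PP$-a.s.\ for $N$ large; hence $\min_x\tau_x \ge e^{-\beta C_0 N}$, which is where the $\beta C_0$ in the exponent comes from. (iii) Combining via the comparison inequality $\lambda_Y \ge \lambda_W \cdot \frac{a_{\mathrm{form}}}{A_{\mathrm{meas}}}$, where $a_{\mathrm{form}}$ is the worst-case lower ratio of Dirichlet forms and $A_{\mathrm{meas}}$ the worst-case upper ratio of stationary weights, yields $\lambda_Y \ge \tfrac2N \cdot \kappa\, e^{-\beta C_0 N}\cdot(\text{poly}(N))$ — but the claimed bound is only polynomial in $N$, not exponentially small, so something is off with a naive application and the cruder min-bound on $\tau_x$ is too lossy.

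The main obstacle, therefore, is that the trivial $\min_x\tau_x$ bound destroys the polynomial rate, so the comparison must be done more cleverly: rather than comparing against SRW globally, I expect one restricts attention to the ``bulk'' where $\tau_x$ is moderate (say $E_x \ge -K\log N$ for suitable $K$, which holds for all but a vanishing fraction — in fact $\PP$-a.s.\ for \emph{all} $x$ once $2^N\PP[E_x<-K\log N]\to 0$, impossible, so instead one handles the few low sites separately), or one uses that moving \emph{away} from a deep-negative site costs weight at most $\tau_x\wedge\tau_y$ but the return is fast. The cleanest route is likely: use canonical paths that avoid low-$\tau$ vertices except at endpoints, so that each edge on a path has weight bounded below by $e^{-\beta C_0' \sqrt N\cdot\text{const}}$ — still exponential. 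Given that the target exponent is genuinely only polynomial (powers $N^{-K-1-\beta C_0}$), the honest reading is that $C_0$ is a \emph{constant} multiplying $\beta$ inside a \emph{polynomial} power $N^{-\beta C_0}$, i.e.\ the bound is $N^{-(K+1+\beta C_0)}$, so one needs $\min_x E_x \ge -C_0\sqrt{\log N}$-type control — which is false for $2^N$ i.i.d.\ Gaussians. Hence I expect the real argument restricts the comparison to a high-probability sub-hypercube or uses a two-scale decomposition (remove the $o(1)$-fraction of sites with very negative $E_x$, bound their effect on the gap separately via a Cheeger/conductance estimate), and the delicate point — the one I would budget the most effort for — is showing that these atypically low sites do not create a bottleneck, i.e.\ that the chain escapes them in time $\mathrm{poly}(N)$, which is plausible precisely because from such a site \emph{every} neighbor is reached at rate $q_{xy}=(\tau_x\wedge\tau_y)/(1\wedge\tau_x)=1$ when $\tau_x<1$, so low sites are in fact \emph{fast} for $Y$ (this is the whole point of the modified chain, cf.\ Remark 2), and it is only the \emph{return probability} to them that is small — harmless for mixing. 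Assembling this into a clean conductance or modified-log-Sobolev bound, with the $N^{-K}$ slack absorbing the geometry of the removed set, is the crux.
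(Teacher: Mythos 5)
Your diagnosis of why the naive edge-by-edge comparison with simple random walk fails, and your observation that low-$\tau$ sites are ``fast'' for $Y$ (rate out is $1$) and that the small factor at a bad endpoint cancels against $\nu_x=(1\wedge\tau_x)/Z_N$, are both exactly the right starting points and do match the mechanism in the paper's proof. But you then go looking for the ``good'' threshold in the wrong place. You test $E_x\ge -K\log N$ (exponentially small $\tau_x$, too weak to help) and $E_x\ge -C_0\sqrt{\log N}$ (fails for some $x$ a.s.), conclude neither works, and leave the resolution open. The correct threshold is far milder: declare $x$ \emph{good} when $\tau_x\ge N^{-\beta C_0}$, i.e.\ $E_x\ge -C_0 N^{-1/2}\log N$, which is essentially $E_x\ge 0$ and holds for slightly more than half of all vertices. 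One does \emph{not} need all vertices to be good; one needs the good vertices to be dense and well-connected, so that the Diaconis--Stroock bound \eqref{eq:ds91} can be applied to a complete family of canonical paths whose \emph{interior} vertices are all good. Then on interior edges $1/c_{uv}\le Z_N N^{\beta C_0}$, and at bad endpoints the factor $\tau_x\wedge\tau_v=\tau_x$ cancels with $\nu_x$, exactly as you anticipated; no bound on $\min_x\tau_x$ is ever invoked.

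The genuine gap in your proposal is the missing construction of such paths, which is the bulk of the paper's argument (Lemma~\ref{l:goodpaths}). Concretely, one must prove the density facts \eqref{eq:goodneigh}--\eqref{eq:goodpath} (every vertex has $\ge\tfrac12 C_0\sqrt N$ good neighbors, and every pair at distance $2$ or $3$ is joined by a length-$\le 7$ path with good interior), both via union bounds exploiting that $\PP[x\text{ bad}]<\tfrac12$; then build an embedding $\varphi_N$ of $\HH_N$ into its good sub-graph (send each bad vertex to a good neighbor, send edges to short good detours), apply $\varphi_N$ to the standard coordinate-flipping canonical paths, and verify the resulting paths have length $O(N)$ and edge congestion $O(N^K 2^{N-1})$. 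Each of these steps is needed to turn your (correct) heuristic that ``bad endpoints cancel'' into the claimed polynomial lower bound on $\lambda_Y$; without them the ``assembling'' you defer to at the end of your proposal cannot be carried out, and a direct conductance or log-Sobolev argument would still run into the same bottleneck at interior edges adjacent to bad vertices.
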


We prove this proposition with help of the Poincaré inequality derived
in \cite{DS91}. To state this inequality, let $\Gamma$ be a
complete set of self-avoiding nearest-neighbor paths on $\HH_N$, that is for
each $x\neq y\in\HH_N$ there is exactly one path
$\gamma_{xy} \in \Gamma$ connecting $x$ and $y$. Let $|\gamma|$ be the
length of the path $\gamma$.
By Proposition 1' of \cite{DS91}, using also the reversibility of
$Y$ and recalling the definition \eqref{def:cond} of the conductances, it
follows that
\begin{equation}
  \label{eq:ds91}
  \frac{1}{\lambda_Y} \leq
  \max_{e=\{u,v\}\in\cE_N} \Bigg\{\frac{1}{c_{uv}}\sum_{
		\substack{\gamma_{xy}\in\Gamma:\\ \gamma_{xy}\ni e}}
    |\gamma_{xy}|\nu_x\nu_y \Bigg\}.
\end{equation}

To minimize the right-hand side of \eqref{eq:ds91}, a special care should be
taken of the edges whose conductance $c_{uv} = (\tau_u \wedge \tau_v)/Z_N$ is
very small, that is which are incident to vertices with very small $\tau_u$.
Those `bad' edges should be avoided if possible by paths $\gamma \in \Gamma$.
They cannot be avoided completely, since $\Gamma$ should be a complete set
of paths. On the other hand, if such edge is the first or the last edge of
some path $\gamma_{xy} $, its small conductance is canceled by equally small
$\nu_x$ or $\nu_y$. Therefore, to apply \eqref{eq:ds91} efficiently, one
should find a set of paths $\Gamma$ such that all paths $\gamma\in\Gamma$ avoid
`bad' vertices, except for vertices at both ends of the paths.

In the context of spin glass dynamics this method was used before in
\cite{FIKP98} to find the spectral gap of the Metropolis dynamics
\eqref{def:X}. Using the same approach, that is using the same set of paths
$\Gamma$ as in \cite{FIKP98},
we could find a lower bound on the spectral gap of the fast chain $Y$ of
leading order $\exp\{-c\sqrt{ N \log N}\}$. This turns out to be too
small for our purposes, cf.~\remref{rem:convmode}.

In the next lemma we construct a set of paths $\Gamma$ that avoids more `bad'
vertices, which allows to improve the lower bound on the spectral gap to
be polynomial in $N$. This is possible by using an embedding of $\HH_N$ into
its sub-graph of `good' vertices, i.e.~vertices with
not too small $\tau_x$, which is inspired by similar embeddings in
\cite{HLN87}.

For a nearest-neighbor path $\gamma = \{x_0,\dots,x_n\}$,
we call the vertices $x_1,\dots,x_{n-1}$ the interior vertices of
$\gamma$, and the edges $\{x_i,x_{i+1}\}$, $i=1,\dots,n-2$, the interior
edges of $\gamma $.

\begin{lemma}
  \label{l:goodpaths}
  There is an integer $K>0$ and a constant $C_0>0$, such that
	$\PP$-a.s.~for $N$ large
  enough there exists a complete set of paths $\Gamma$, such that
  the following three properties hold.
  \begin{enumerate}[(i)]
    \item	For every path $\gamma\in\Gamma$, every interior edge
    $e=\{u,v\}$ satisfies
    \begin{equation*}
      Z_N c_{uv}=\tau_u\wedge\tau_v \geq N^{-\beta C_0}.
    \end{equation*}
    \item $|\gamma|\leq 8 N$ for all $\gamma\in\Gamma$.
    \item Every edge $e\in\cE_N$ is contained in at most
    $N^K 2^{N-1}$ paths $\gamma\in\Gamma$.
  \end{enumerate}
\end{lemma}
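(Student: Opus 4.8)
\textbf{Proof plan for Lemma~\ref{l:goodpaths}.}

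The plan is to build the path set $\Gamma$ explicitly by routing each nearest-neighbor path through a sub-hypercube spanned by ``good'' coordinates, so that all interior vertices have $\tau$ bounded below by a polynomial in $N$. First I would fix the notion of a good vertex: call $x$ good if $\tau_x \geq N^{-\beta C_0}$, i.e.\ $E_x \geq -C_0 \sqrt{N}\log N / (\text{something})$ — concretely $E_x \geq -c_0 \log N$ for a suitable constant, so that $\PP[x \text{ bad}] \leq N^{-c}$ for $c$ as large as we like by choosing $C_0$ large. Since the edge condition in (i) only involves $\tau_u \wedge \tau_v$, it suffices that both endpoints of every interior edge are good. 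The classical obstacle (as in \cite{FIKP98,HLN87}) is that a fixed linear-size fraction of vertices may be bad, so one cannot simply avoid them vertex-by-vertex; instead one exploits that bad vertices are sparse \emph{and} that the hypercube has enough room to detour. The embedding idea: by a first-moment / concentration argument, $\PP$-a.s.\ for $N$ large there is a set $G$ of ``good coordinates'' of size $|G| \geq N - K'\log N$ (or even $N - O(1)$, but $N - K'\log N$ suffices) such that within the sub-cube obtained by freezing the complementary coordinates, almost every vertex is good; more precisely, one shows that a random restriction to such a sub-cube kills enough bad vertices. This is where the probabilistic heart of the argument lies.

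Next I would construct $\gamma_{xy}$ for each pair $x \neq y$ as a three-leg path: a short ``entry'' segment from $x$ into the good sub-structure, a ``bulk'' segment that does the actual coordinate-flipping staying among good vertices, and a short ``exit'' segment into $y$. The entry and exit segments have length $O(\log N)$ (this is where the only bad interior vertices could appear, but they are ruled out by construction, or their edges are the first/last ones and hence excluded from property (i) — this is exactly the cancellation remark preceding the lemma), and the bulk segment flips the remaining $\leq N$ coordinates one at a time in a prescribed order, giving total length $\leq 2N + O(\log N) \leq 8N$ for $N$ large, which is (ii). For the congestion bound (iii), I would use a counting argument on the canonical bulk paths: in a hypercube, the standard ``flip coordinates in increasing order'' routing sends at most $2^{N-1}$ paths through any fixed edge; the detours for entry/exit and for avoiding the (few) remaining bad vertices multiply this by at most a polynomial factor $N^K$, since each local rerouting decision has at most $\mathrm{poly}(N)$ options and only $O(\log N)$ such decisions occur per path. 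Summing geometric-type bounds over the reroute choices yields the $N^K 2^{N-1}$ bound.

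The main obstacle I anticipate is the simultaneous control of (i) and (iii): making the paths avoid all bad interior vertices forces detours, and one must show these detours do not blow up the edge-congestion beyond a polynomial factor, nor the length beyond $8N$. The clean way is the coordinate-freezing embedding — choosing the frozen coordinate set so that the induced sub-cube is entirely good (or good except on a set so sparse it can be locally detoured within the sub-cube) — because then the bulk routing is the \emph{standard} hypercube routing on the good sub-cube, with its well-known $2^{\text{(dim)}-1}$ congestion, and only the short entry/exit legs need ad hoc treatment. I would also need a union bound over all $2^N$ vertices (or $4^N$ pairs) to make the ``good sub-cube exists'' statement hold $\PP$-a.s.; this forces the per-vertex bad probability to beat $4^{-N}$, which is why $E_x$ must be allowed to go down to order $-\sqrt N$ (equivalently $\tau_x \geq e^{-cN}$) rather than merely $-\log N$ — so the constant $C_0$ in the exponent $N^{-\beta C_0}$ genuinely must be allowed to depend on nothing but absolute constants, and one should double-check that a polynomial lower bound $N^{-\beta C_0}$ (not merely $e^{-cN}$) on $\tau_u\wedge\tau_v$ for interior edges is actually achievable — this works precisely because, after freezing $O(\log N)$ coordinates, the fraction of bad vertices in the sub-cube can be driven below any inverse polynomial, letting us demand $E_u, E_v \geq -C_0\sqrt{\log N}$ hence $\tau_{u}\wedge\tau_v \geq e^{-\beta\sqrt N \cdot C_0 \sqrt{\log N}}$; re-examining, to get a genuine \emph{polynomial} bound $N^{-\beta C_0}$ one needs $E_u \geq -C_0\sqrt{(\log N)/N}\cdot\sqrt N$, i.e.\ essentially $E_u \gtrsim -C_0\log N/\sqrt N \cdot$, so the good-vertex threshold is at the very mild level $\tau_x \geq N^{-\beta C_0}$ directly, and the sparsity $\PP[\tau_x < N^{-\beta C_0}] = \PP[E_x < -C_0 \log N/\sqrt N]$ is only $\tfrac12 - o(1)$, \emph{not} small — so in fact the freezing must be more aggressive, freezing a positive fraction of coordinates is too costly (length), hence the correct balance is to freeze $\Theta(N/\log N)$ coordinates chosen so the sub-cube of dimension $N - \Theta(N/\log N)$ has bad-fraction $\leq N^{-c}$; verifying this balance closes simultaneously (i), (ii) with the constant $8$, and (iii) with some integer $K$, and it is the one genuinely delicate calculation I would carry out in full.
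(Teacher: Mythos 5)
Your plan is built around the idea of locating a large sub-hypercube (``freezing'' a set of coordinates) that consists mostly of good vertices, and then using the standard hypercube routing inside it.  You yourself notice the fatal obstruction near the end: with the threshold $\tau_x \geq N^{-\beta C_0}$, the per-vertex bad probability is
\[
\PP[\tau_x < N^{-\beta C_0}] \;=\; \PP\bigl[E_x < -C_0 N^{-1/2}\log N\bigr] \;=\; \tfrac12 - o(1),
\]
which is essentially $1/2$, not small.  Your proposed repair --- freeze a larger (order $N/\log N$) set of coordinates --- cannot work either: the $\tau_x$ are i.i.d., so the restriction to \emph{any} sub-cube, of \emph{any} dimension, still has bad fraction concentrated at $\approx 1/2$.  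No amount of coordinate-freezing can drive the bad density below an inverse polynomial, or indeed below $1/2-o(1)$.  The HLN87 embedding is therefore not being used in the sub-cube-selection mode you have in mind; that mode requires defects to be rare, which is not the case here.  This is the central gap.

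The paper avoids the issue entirely by accepting that roughly half the vertices are bad and exploiting two \emph{local} facts instead: (a) every vertex has at least order $\sqrt N$ good neighbors ($\PP$-a.s.), and (b) any pair of vertices at distance $2$ or $3$ is joined by a path of length at most $7$ whose interior is entirely good ($\PP$-a.s., via a union bound that uses the $\sqrt N$-many disjoint candidate paths to beat $2^N$).  From (a) and (b) one builds a map $\varphi_N$ that sends every vertex to a good vertex at distance $\leq 1$ and every edge to an all-good-interior path of length $\leq 7$, then applies $\varphi_N$ to the canonical ``flip coordinates in order'' paths and loop-erases.  Because $\varphi_N$ deforms each canonical path only within a ball of fixed radius (radius $4$) around each original edge, the congestion through any edge is at most the number of canonical paths through that ball, i.e.\ $N^K 2^{N-1}$ with $K$ a fixed integer.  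Your congestion argument has a secondary weakness here too: $O(\log N)$ local reroute decisions with $\mathrm{poly}(N)$ options each would give an $N^{O(\log N)}$, not $N^K$, overhead; the paper's locality-of-deformation bound gives the genuinely polynomial factor.

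So the approach in your plan would not close as written.  The fix is to abandon the global good-sub-cube and replace it by the local-deformation map: you do not need bad vertices to be sparse globally, only that each vertex can be ``pushed'' to a good neighbor and each edge can be ``stretched'' into a short good detour, with each such operation affecting a ball of bounded radius.
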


\begin{proof}
  For $C_0>0$, whose value will be fixed later, we say that $x\in\HH_N$
  is good if $\tau_x \geq N^{-\beta C_0}$, and it is bad otherwise. To construct
  the complete set of paths $\Gamma$ satisfying the required properties,
  we will use the fact that the set of good vertices is very dense in $\HH_N$.
  In particular, we will show that
  \begin{equation} \label{eq:goodneigh}
    \parbox{0.85\textwidth}{$\PP$-a.s.~for $N$ large enough, every $x\in\HH_N$
      has at least
      $\frac{1}{2}C_0\sqrt{N}$ good neighbors,}
  \end{equation}
  and
  \begin{equation}\label{eq:goodpath}
    \parbox{0.85\textwidth}{$\PP$-a.s.~for $N$ large enough, for any pair of
      vertices $x,y$ at
      distance 2 or 3, there is a nearest-neighbor path of length at most
      7 connecting $x$ and $y$, such that all interior
      vertices of this path are good,}
  \end{equation}

  To prove these two claims, note first that for any $x\in\HH_N$, the
  probability of being bad is
  \begin{equation*}
    \PP\big[\tau_x < N^{-\beta C_0}\big]
    = \PP[E_x < - C_0 N^{-\frac{1}{2}}\log N]
    = \frac{1}{2} - \int_0^{C_0 N^{-\frac{1}{2}}\log N}
    \frac{1}{\sqrt{2\pi}} e^{-\frac{s^2}{2}}ds.
  \end{equation*}
  For $N$ large enough the integrand is larger than $\frac{1}{2}$, and it
  follows that
  \begin{equation*}
    \PP[x\text{ is bad}]
    \leq \frac{1}{2}\big(1-C_0N^{-\frac{1}{2}}\log N\big)
    =: \frac 12(1-q_N).
  \end{equation*}
  Hence, the number of
  bad neighbors of a vertex $x\in\HH_N$ is stochastically dominated by a
  Binomial$\big(N,\frac{1}{2}(1-q_N)\big)$ random variable $B$. For
  $\lambda>0$, the exponential Chebyshev inequality yields
  \begin{align*}
    \PP\big[x &\text{ has more than } N-\frac{1}{2} C_0\sqrt{N}
      \text{ bad neighbors}\big]\\
    &\leq \PP\big[B\geq N-\frac{1}{2} C_0\sqrt{N}\big]
    = \PP\big[e^{\lambda B} \geq e^{\lambda (N-\frac{1}{2} C_0\sqrt{N})}
      \big] \\
    &\leq e^{-\lambda (N-\frac{1}{2} C_0\sqrt{N})} \Big(1+
      \frac{1}{2}(1-q_N)(e^{\lambda}-1)\Big)^N \\
    &= e^{-\lambda (N-\frac{1}{2} C_0\sqrt{N})}
    \bigg( \frac{e^{\lambda}}{2}\Big(1-q_N	+ e^{-\lambda}(1+q_N)\Big)
      \bigg)^N \\
    &\leq 2^{-N} e^{\frac{\lambda}{2} C_0\sqrt{N}}
    \Big( \exp\{-q_N + e^{-\lambda}(1+q_N)\}\Big)^N.
  \end{align*}
  Since $q_N\to 0$ as $N\to\infty$, the last term in the parenthesis is
  bounded by $2e^{-\lambda}$ for $N$ large enough. Inserting $q_N$ and
  choosing $\lambda = \log N $, the above is bounded by
  \begin{align*}
    2^{-N} & \exp\Big\{\frac{1}{2} C_0\sqrt{N}\log N
      - C_0 \sqrt{N} \log N + 2\Big\}\\
    &\leq 2^{-N} \exp\Big\{-\frac{1}{4}C_0\sqrt{N} \log N\Big\},
  \end{align*}
  for $N$ large enough. With a union bound over all $x\in\HH_N$
  and using the Borel-Cantelli lemma, \eqref{eq:goodneigh} follows.

  To prove \eqref{eq:goodpath}, we first introduce some notation.
  For a given vertex $x$ and $\{i_1,\dots,i_k\}\subset \{1,\dots,N\}$, denote by
  $x^{i_1\cdots i_k}$ the vertex that differs from $x$ exactly in coordinates
  $i_1,\dots,i_k$. If two vertices $x$ and $y$ are at distance $2$, then
  $y=x^{kl}$ for some $k,l\in \{1,\dots, N\}$. Then for
  $\{i,j\}\cap \{k,l\}=\emptyset$ we define the path
  $\gamma_{xy}^{ij}$ of length 6 as
  $\{x,x^i, x^{ij}, x^{ijk}, x^{ijkl}=y^{ij}, y^{j}, y\}$.
  Similarly, for $x,y$ with $d(x,y)=3$, we have $y=x^{klm}$, and for
  $\{i,j\}\cap \{k,l,m\}=\emptyset$ we define
  the path $\gamma_{xy}^{ij}$ of length 7 by
  $\{x,x^i,x^{ij},x^{ijk},x^{ijkl},x^{ijklm}=y^{ij},y^j,y\}$.
  Observe that for fixed $x,y$ with $d(x,y)=2$ or $3$ and for different pairs
  $i,j$ the innermost 3 or 4 vertices of the paths $\gamma_{xy}^{ij}$ are disjoint.

  We now show that with high probability, for every $x,y$ at distance $2$ or
  $3$, we may find $i,j$ such that $\gamma_{xy}^{ij}$ has only good interior
  vertices.
  Fix a pair $x,y\in\HH_N$ at distance 2 or 3, and let as above $k,l$ or
  $k,l,m$ be the
  coordinates in which $x$ and $y$ differ. Assume for the moment that both $x$
  and $y$ have at least $\frac{1}{2}C_0\sqrt{N}$ good neighbors. Then there
  are at least $\frac{1}{4}C_0^2 N$ pairs $i,j$ such that the vertices
  $x^i$ and $y^j$ are good. Moreover, since it is a matter of dealing with a
  constant number of exceptions, we may tacitly assume that $i\neq j$, and
  $\{i,j\}\cap\{k,l\}=\emptyset$ or $\{i,j\}\cap\{k,l,m\}=\emptyset$,
  respectively.

  The remaining interior vertices $\{x^{ij}, x^{ijk}, x^{ijkl}=y^{ij}\}$ or
  $\{x^{ij},x^{ijk},x^{ijkl},x^{ijklm}=y^{ij}\}$ are all good with probability
  strictly larger than $1/2$, so the probability that one or more of
  these vertices are bad is bounded by $15/16$. Since these 3 or 4
  innermost vertices are disjoint for different pairs $i,j$, by
  independence, the
  probability that among all $\frac{1}{4}C_0^2 N$ pairs $\{i,j\}$ there is none for which
  all innermost 3 or 4 vertices of $\gamma_{xy}^{ij}$ are good is bounded
  by $(15/16)^{\frac{1}{4}C_0^2 N}$.
  Hence, for one fixed pair $x,y\in\HH_N$ at distance 2 or 3, where
  both $x$ and $y$ have at least $\frac{1}{2}C_0\sqrt{N}$ good neighbors,
  the probability that there is no path from $x$ to $y$
  of length 6 or 7 with all interior vertices good is bounded by
  \begin{equation*}
    (15/16)^{\frac{1}{4}C_0^2 N}.
  \end{equation*}
  There are less than $2^N (N^2+N^3)$ pairs of vertices at distance 2 or 3
  respectively, and we know from the proof of \eqref{eq:goodneigh} that with
  probability larger than $1- e^{-c \sqrt{N}\log N}$ every $x\in\HH_N$ has at
  least $\frac{1}{2}C_0\sqrt{N}$ good neighbors. It follows that the
  probability that the event in \eqref{eq:goodpath} does not happen is bounded by
  \begin{equation}\label{eq:union2}
    e^{-c \sqrt{N}\log N}
    + 2^N (N^2+N^3) (15/16)^{\frac{1}{4}C_0^2 N}.
  \end{equation}
  Choosing $C_0>\sqrt{\frac{4\log2}{\log15/16}}$ and applying the Borel-Cantelli
  lemma implies \eqref{eq:goodpath}.

  We now use the density properties \eqref{eq:goodneigh} and
  \eqref{eq:goodpath} of good vertices to define a (random)
  mapping from the hypercube to its sub-graph of good vertices. Let
  \begin{equation*}
    \cP_N=\big\{\{x_0,\dots,x_k\}:~k\geq0,~d(x_i,x_{i-1})
      =1~\forall~i=1,\dots,k\big\}
  \end{equation*}
  be the set of finite nearest-neighbor paths on $\HH_N$, including paths of
  length zero, which are just single vertices. Define the mapping
  $\varphi_N:\{\HH_N,\cE_N\}\to\{\HH_N,\cP_N\}$ in the following way.
  For $x\in\HH_N$, let
  \begin{equation*}
    \varphi_N(x) = \begin{cases} x, &\text{if $x$ is good;} \\
      \text{$x^i$},
      &\text{if $x$ and $x^j$, $j<i$, are bad but $x^i$ is good}; \\
      x, &\text{if $x$ is bad and has no good neighbor}.
    \end{cases}
  \end{equation*}
  By \eqref{eq:goodneigh}, $\PP$-a.s.~for $N$ large enough the last option will
  not be used, and therefore  $\varphi_N$ maps all vertices to good vertices. In this case,
  for two neighboring vertices $x,y$, their good images $\varphi_N(x)$ and
  $\varphi_N(y)$ can either coincide, or be at distance 1, 2, or 3.

  For an edge $e=\{x,y\} \in \cE_N$, let $\varphi_N(e)$ be
  \begin{itemize}
    \item the `path' $\{\varphi_N(x)\}$, if $\varphi_N(x)$ is good and
    $\varphi_N(x)=\varphi_N(y)$;

    \item the path $\{\varphi_N(x),\varphi_N(y)\}$, if both $\varphi_N(x)$ and
    $\varphi_N(y)$ are good and at distance 1;

    \item the path $\gamma^{ij}_{\varphi_N(x),\varphi_N(y)}$ with `minimal'
    $i,j$ such that all vertices of this path are good, if both
    $\varphi_N(x)$ and $\varphi_N(y)$ are good with distance 2 or 3 and
    such path exists;

    \item the path $\{x,y\}$ in any other case.
  \end{itemize}
  From \eqref{eq:goodneigh} and \eqref{eq:goodpath} it follows that
  $\PP$-a.s.~for $N$ large enough the last option does not occur and
  $\varphi_N$ maps all edges to paths that contain only good vertices.

  Finally, we extend $\varphi_N$ to be a map that sends paths to paths. For
  $\gamma =\{x_0,\dots,x_n\}\in \mathcal P_N$ we define $\phi_N(\gamma )$ to be a
  concatenation of paths $\phi_N(\{x_{i-1},x_i\})$, $i=1,\dots, n$, with
  possible loops erased by an arbitrary fixed loop-erasure algorithm.
  Note that $\varphi_N$ can make paths shorter or longer, but by construction, for
  any path $\gamma\in\cP_N$,
  \begin{equation} \label{eq:pathlength}
    |\varphi_N(\gamma)|\leq 7|\gamma|.
  \end{equation}

  We can now construct the random set of paths $\Gamma$ that satisfies the
  properties of the lemma. We first define a certain canonical set
  of paths $\tilde{\Gamma}$, and then use the mapping $\phi_N$ to construct
  $\Gamma$ from $\tilde{\Gamma}$.

  For any pair of vertices $x\neq y\in\HH_N$, let
  $\tilde{\gamma}_{xy}$ be the path from $x$ to $y$ obtained by consequently
  flipping the disagreeing coordinates, starting at coordinate 1. These paths
  are all of length smaller or equal to $N$, and the set
  $\tilde{\Gamma}=\{\tilde{\gamma}_{xy}:~x\neq y\in\HH_N\}$ has
  the property that any edge $e$ is used by at most $2^{N-1}$ paths in
  $\tilde{\Gamma}$. Indeed,
  if $e=\{u,v\}$, then there is a unique $i$ such that $u_i\neq v_i$. By
  construction, $e\in \tilde \gamma_{xy}$ if
  \begin{align*}
    x&=(x_1,\dots,x_{i-1}, u_i, u_{i+1},\dots,u_N),\\
    y&=(v_1,\dots,v_{i-1},v_i,y_{i+1},\dots,y_N).
  \end{align*}
  It follows that a total of $N-1$ coordinates of $x$ and $y$ are
  unknown, and so the number of possible pairs $x,y$ for paths
  $\tilde{\gamma}_{xy}$ through $e$ is bounded by $2^{N-1}$ (cf.~\cite[Example~2.2]{DS91}).

  For any pair $x\neq y\in\HH_N$, let the path $\gamma_{xy}$ in the set
  $\Gamma$ be
  defined by
  \begin{equation*}
    \gamma_{xy} = \begin{cases}
      \phi_N(\tilde \gamma_{xy}),&\text{if $x,y$ are good},\\
      \{x\}\circ \phi_N(\tilde \gamma_{xy}),&\text{if $x$ is bad and $y$ is good},\\
      \phi_N(\tilde \gamma_{xy})\circ\{y\},&\text{if $x$ is good and $y$ is
        bad},\\
      \{x\}\circ \phi_N(\tilde \gamma_{xy})\circ\{y\},&\text{if $x$ is good and $y$ is
        bad},\\
    \end{cases}
  \end{equation*}
  where `$\circ$' denotes the path concatenation.

  It remains to check that this set of paths $\Gamma$ indeed satisfies the
  required properties. First, by construction, $\Gamma$ is complete, that
  is every path $\gamma_{xy}\in\Gamma$ connects $x$ with $y$ and is
  nearest-neighbor and self-avoiding. Further, by construction of $\varphi_N$ and
  the properties \eqref{eq:goodneigh} and \eqref{eq:goodpath}, $\PP$-a.s.~for
  $N$ large enough, all interior vertices of all $\gamma\in\Gamma$
  are good, i.e.~(i) is satisfied. Moreover, by
  \eqref{eq:pathlength} and the construction of the paths
  $\tilde\gamma\in\tilde{\Gamma}$, the paths $\gamma\in\Gamma$ have
  length at most $7N+2$, hence (ii) is satisfied for $N\geq2$. Finally,
  $\varphi_N$ deforms the paths $\tilde{\gamma}\in\tilde{\Gamma}$ only
  locally, so that the number of paths in $\Gamma$ passing through an edge
  $e$ is bounded by the number of paths in $\tilde{\Gamma}$ passing through
  the ball of radius 4 around $e$. But this number is bounded by $2^{N-1}$
  times the number of edges in that ball, which is bounded by $N^K$ for
  some integer $K>0$. This proves (iii) and thus finishes the proof of the
  lemma.
\end{proof}

We can now prove the spectral gap estimate.

\begin{proof}[Proof of \propref{prop:spectralgap}]
  $\PP$-a.s.~for every $N$ large enough we can find a complete set of
  paths $\Gamma$ such that (i), (ii) and (iii) of \lemref{l:goodpaths}
  and \eqref{e:boundzN} hold. By (ii), the expression in \eqref{eq:ds91}
  over which the maximum is taken is bounded from above by
  \begin{equation} \label{eq:exprmax}
    \frac{8N}{Z_N} \frac{1}{\tau_u \wedge \tau_v} \sum_{\gamma_{xy} \ni \{u,v\}}
    (\tau_x\wedge 1)(\tau_y \wedge 1).
  \end{equation}
  We distinguish three cases for the position of the edge $\{u,v\}$ in a
  path $\gamma_{xy}$.

  \begin{enumerate}[(1)]
    \item If $\{u,v\}$ is an interior edge of
    $\gamma_{xy}$, then  $\tau_u \wedge \tau_v$ is larger than
    $N^{-\beta C_0}$ by (i) of \lemref{l:goodpaths}.

    \item If $\{u,v\}$ is at the end of the path $\gamma_{xy}$, say at $u=x$,
    and $v$ is an interior vertex of $\gamma_{xy}$, then
    $\tau_x \wedge \tau_v$ is either larger than
    $N^{-\beta C_0}$, or it is equal to $\tau_x$ in which
    case it cancels with $\tau_x\wedge1$. Indeed, if $\tau_x \wedge \tau_v$
    was smaller than $N^{-\beta C_0}$ and equal to
    $\tau_v$, then $v$ would be a bad interior vertex of $\gamma$, which
    contradicts (i) of \lemref{l:goodpaths}.

    \item If $\gamma_{xy}$ only consists of the single edge $\{x,y\}$, then
    $\tau_x \wedge \tau_y$ is either larger than 1, or the term
    $\tau_x\wedge\tau_y$ cancels with the smaller one of $\tau_x\wedge1$
    and $\tau_y\wedge1$.
  \end{enumerate}
  It follows that
  for every edge $\{u,v\}$ the expression \eqref{eq:exprmax} is bounded from above by
  \begin{equation*}
    \frac{8N}{Z_N} N^{\beta C_0}
    \#\{\text{paths through }e\}.
  \end{equation*}
  Since, by (iii) of \lemref{l:goodpaths}, the number of paths is bounded
  by $N^K2^{N-1}$, and, by \eqref{e:boundzN},
  ${Z_N} \geq \kappa 2^{N}$, this completes the proof.
\end{proof}

In a next step we construct the mixing time $\tx$ of the fast chain $Y$. To
this end, define the mixing scale
\begin{equation}
  \label{e:mN}
  m_N = \frac{8}{\kappa}N^{K+3+\beta C_0}.
\end{equation}
Then \propref{prop:spectralgap} reads $\lambda_N \geq 2N^2m_N^{-1}$.

We assume that our probability space $(\Omega,\cF,\PP)$ is rich enough so
that there exist infinitely many independent uniformly on $[0,1]$ distributed
random variables, independent of anything else. A randomized stopping time $T$
 is a positive random variable such that the event $\{T\leq t\}$ depends only
on $\{Y_s:s\leq t\}$, the environment, and on the values of these additional random variables.

\begin{proposition} \label{prop:mixtime}
$\PP$-a.s.~for $N$ large enough, there exists a randomized stopping
time $\tx$ with values in $\{m_N,2m_N,3m_N,\dots\}$ such that $\tx$ is a
strong stationary time for $Y$, that is	for any (possibly random)
$Y_0\in\HH_N$,
\begin{enumerate}[(i)]
	\item $P^{\tau}_{Y_0}[Y_{\tx}=y]= \nu_y$,
	\item for any $k\geq1$, $P^{\tau}_{Y_0}[\tx\geq km_N] = e^{-(k-1)}$,
	\item $\tx$ and $Y_{\tx}$ are independent.
\end{enumerate}
\end{proposition}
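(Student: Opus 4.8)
The plan is to construct $\tx$ as the first time a certain coupling succeeds, using the standard strong-stationary-time technology but adapted to run in multiples of the mixing scale $m_N$. First I would use the spectral gap bound of \propref{prop:spectralgap}, namely $\lambda_Y \geq 2N^2 m_N^{-1}$, together with the reversibility of $Y$ to control the $L^2$-distance of the distribution of $Y$ at time $m_N$ from stationarity: for any starting vertex $x$,
\begin{equation*}
  \Big\| \frac{P^\tau_x[Y_{m_N}=\cdot]}{\nu_\cdot} - 1 \Big\|_{L^2(\nu)}^2
  = \sum_{y\in\HH_N} \frac{(P^\tau_x[Y_{m_N}=y])^2}{\nu_y} - 1
  \leq \frac{1}{\nu_x} e^{-2\lambda_Y m_N}.
\end{equation*}
Since $\nu_x \geq Z_N^{-1} \min_x(1\wedge \tau_x)$ and, on the good event, the environment has no atom smaller than $N^{-\beta C_0'}$ at a uniformly positive density of sites (and in any case $1\wedge\tau_x \geq e^{-\beta\sqrt N \cdot c\sqrt N}$ is at worst exponentially small in $N$ with overwhelming probability by Gaussian tails and a union bound), we get $\nu_x^{-1} \leq e^{cN}$ for some deterministic $c$, while $e^{-2\lambda_Y m_N} \leq e^{-4N^2}$ swamps this. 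Hence the $L^\infty$ separation distance $\mathrm{sep}(m_N) := \max_{x,y}\big(1 - P^\tau_x[Y_{m_N}=y]/\nu_y\big)$ satisfies $\mathrm{sep}(m_N) \leq e^{-N^2}$, say, $\PP$-a.s.\ for $N$ large; in particular $\mathrm{sep}(m_N) \leq e^{-1}$.

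Next I would invoke the classical construction of a strong stationary time from a bound on the separation distance (Aldous--Diaconis; see also Levin--Peres--Wilmer). The separation distance at time $km_N$ is submultiplicative along the skeleton chain $\widehat Y_k := Y_{km_N}$: $\mathrm{sep}(km_N) \leq \mathrm{sep}(m_N)^k \leq e^{-k}$. The standard filling/coupling argument then produces, on the enlarged probability space carrying the extra i.i.d.\ uniforms, a randomized stopping time $\widehat\tx$ for the skeleton chain with the property that $P^\tau_{Y_0}[\widehat\tx > k] = \mathrm{sep}(km_N)$... more precisely one constructs $\widehat\tx$ so that at each step, conditionally on not having stopped, one stops with the maximal probability consistent with $Y_{km_N}$ being distributed according to $\nu$ and independent of the stopping event; the clean way is to note that because $\mathrm{sep}(m_N)\le e^{-1}$ one may at each multiple of $m_N$ attempt to "fill to stationarity" with success probability at least $1-e^{-1}$ uniformly in the current state, yielding a stopping time with $P^\tau_{Y_0}[\tx \geq km_N] \leq e^{-(k-1)}$. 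One then thins/couples so that this becomes an exact equality, giving (ii); the filling construction automatically gives $P^\tau_{Y_0}[Y_{\tx}=y]=\nu_y$, which is (i), and that $Y_{\tx}$ is independent of $\tx$, which is (iii). Setting $\tx := m_N \widehat\tx$ gives a randomized stopping time of $Y$ valued in $\{m_N, 2m_N, \dots\}$ with exactly the three asserted properties.

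The main obstacle I anticipate is purely quantitative bookkeeping in the first step: one must ensure that the per-vertex minimum of $\nu_x$ is not so small as to overwhelm the spectral gap contraction, i.e.\ that $\nu_x^{-1} e^{-2\lambda_Y m_N} \to 0$ uniformly in $x$, $\PP$-a.s.\ for large $N$. This needs a crude lower bound $\min_x \tau_x \geq e^{-\beta\sqrt N \max_x |E_x|}$ combined with $\max_x |E_x| \leq c\sqrt N$ $\PP$-a.s.\ (a union bound over $2^N$ standard Gaussians), so $\nu_x^{-1} \leq e^{cN^2/??}$... actually $\nu_x^{-1} \leq Z_N e^{\beta N c} \leq 2^N e^{c\beta N}$, which is only $e^{O(N)}$, comfortably dominated by $e^{-2\lambda_Y m_N} \leq e^{-4N^2}$. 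So the obstacle is mild. A secondary subtlety is making the tail in (ii) an \emph{exact} geometric $e^{-(k-1)}$ rather than a bound: this is handled by the standard device of carrying along an auxiliary independent geometric-type randomization (using the extra uniforms) to "delay" stopping so the tail matches exactly, without disturbing (i) or (iii) — this is where the richness assumption on $(\Omega,\cF,\PP)$ is used. Properties (i) and (iii) are then inherited directly from the strong-stationary-time construction, since by definition a strong stationary time satisfies $P^\tau_{Y_0}[\tx \leq t,\ Y_t = y] = P^\tau_{Y_0}[\tx \leq t]\,\nu_y$ for all $t$, which is exactly (i) plus (iii).
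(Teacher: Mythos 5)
Your proposal is correct and follows essentially the same route as the paper's: reduce to showing $\mathrm{sep}(m_N)\leq e^{-1}$ using the spectral gap bound from \propref{prop:spectralgap} together with a crude lower bound on $\nu^*=\min_x\nu_x$, and then construct $\tx$ at multiples of $m_N$ by the standard filling/acceptance--rejection step with \emph{fixed} success probability $1-e^{-1}$ (which is precisely what makes (ii) an exact geometric tail rather than a bound); the paper obtains the separation bound via the Aldous--Fill chain $\cT\leq\lambda_Y^{-1}(1+\tfrac12\log(\nu^*)^{-1})$, $\bar d(t)\leq e^{-\lfloor t/\cT\rfloor}$, $s(2t)\leq 1-(1-\bar d(t))^2$, whereas you go through the $L^2(\nu)$ spectral decay and submultiplicativity of $\mathrm{sep}$, but the inputs ($\lambda_Y$ and $\nu^*$) and the conclusion are identical. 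One small imprecision: passing from your $L^2(\nu)$ estimate to the pointwise separation distance costs an extra $\nu_y^{-1/2}$ (giving $\mathrm{sep}(m_N)\leq(\nu^*)^{-1}e^{-\lambda_Y m_N}$ rather than $\nu_x^{-1/2}e^{-\lambda_Y m_N}$), which is harmless here since $\lambda_Y m_N\geq 2N^2$ swamps the $e^{O(N)}$ from $(\nu^*)^{-1}$, but the step as written skips this division.
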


\begin{proof}
This construction follows closely \cite[Proposition~3.1]{MM12}, with only
minor adaptations.
Define the following distances from stationarity,
\begin{equation*}
  \begin{split}
    s(t) &= \min\{s\geq 0:~\forall x,y\in\HH_N,~P^{\tau}_x[Y_{t}=y]
      \geq (1-s)\nu(y)\},\\
    \bar{d}(t) &=\max_{x,y\in\HH_N} \|P^{\tau}_x[Y_{t}\in\cdot\,]
    - P^{\tau}_y[Y_{t}\in\cdot\,]\|_{TV},
  \end{split}
\end{equation*}
where $\|\cdot\|_{TV}$ denotes the total variation distance. Define the time
\begin{equation*}
	\cT=\inf\{t\geq0:~\bar{d}(t)\leq e^{-1}\}.
\end{equation*}
From \cite[Lemmas~4.5,~4.6~and~4.23]{AFb} we know that
\begin{equation}
  \begin{split}
    \label{e:mixingdistances}
    \bar{d}(t) &\leq e^{-\floor{t/\cT}},\\
    s(2t) &\leq 1-(1-\bar{d}(t))^2,\\
    \cT &\leq \frac{1}{\lambda_Y} \left(1+\frac{1}{2}\log \frac{1}{\nu^*}\right),
  \end{split}
\end{equation}
where $\nu^*=\min_x \nu_x$. Since $\PP[\tau_x\leq e^{-N^2}]\leq ce^{-c'N}$,
by the Borel-Cantelli lemma, $\PP$-a.s.~for $N$ large enough,
$\log \frac{1}{\nu^*} \leq N^2$. Therefore, by
Proposition~\ref{prop:spectralgap} and \eqref{e:mixingdistances},
$\PP$-a.s.~for $N$ large enough, $\cT \leq \frac{1}{2}m_N$,
$\bar{d}(\frac{1}{2}m_N)\leq e^{-1}$, and $s(m_N) \leq e^{-1}$, which means
that for all $Y_0,y\in\HH_N$,
\begin{equation*}
	P^{\tau}_{Y_0}[Y_{m_N} = y]\geq (1-e^{-1})\nu_y.
\end{equation*}

We can now define the strong stationary time $\tx$ with values in
$\{m_N,2m_N,\dots\}$. Let $U_1,U_2,\dots$ be i.i.d.~uniformly on $[0,1]$
distributed random variables, independent of anything else. Conditionally on
$Y_0=x$, $Y_{m_N}=y$,
let $\tx=m_N$ if
\begin{equation*}
	U_1\leq \frac{(1-e^{-1}) \nu_y}{P^{\tau}_x[Y_{m_N}=y]} \quad(\leq 1).
\end{equation*}
Otherwise, we define $\tx$ inductively: for every $k\in\NN$, conditionally on
$\tx>km_N$, $Y_{km_N}=z$ and $Y_{(k+1)m_N}=y$, let $\tx=(k+1)m_N$ if
\begin{equation*}
	U_{k+1}\leq \frac{(1-e^{-1}) \nu_y}{P^{\tau}_z[Y_{m_N}=y]} \quad(\leq 1).
\end{equation*}

By construction, we have for every $x\in \mathbb H_N$,
\begin{equation*}
	P^{\tau}_{x}[\tx=m_N\mid Y_{m_N}=y]
		= \frac{(1-e^{-1}) \nu_y}{P^{\tau}_{x}[Y_{m_N}=y]},
\end{equation*}
and thus
\begin{equation*}
	P^{\tau}_{Y_0}[\tx=m_N,~ Y_{m_N}=y\mid Y_0=x] = (1-e^{-1}) \nu_y.
\end{equation*}
Similarly, we have
\begin{equation*}
	P^{\tau}_{Y_0}[\tx=(k+1)m_N,~Y_{(k+1)m_N}=y\mid \tx>km_N,~Y_{km_N}=x]
		= (1-e^{-1}) \nu_y.
\end{equation*}
By induction over $k$, we obtain that for any $k\in\NN$ and $y\in\HH_N$,
\begin{equation*}
		P^{\tau}_{Y_0}[\tx=km_N,~ Y_{km_N}=y] = e^{-(k-1)}(1-e^{-1}) \nu_y,
\end{equation*}
which finishes the proof.
\end{proof}

For future reference we collect here two useful statements that follow directly
from the construction of $\tx$.
\begin{lemma}
  \label{l:tmixprop}
  For every $t>0$ and $x\in \mathbb H_N$ and every starting distribution
  $\rho $,
  \begin{equation*}
    \begin{split}
      P^\tau_\rho [Y_t=x| \tx<t]&=\nu_x,
      \\ \big|P^\tau_\rho [Y_t=x]-\nu_x\big| &\le
      P^\tau_\rho[\tx>t]=e^{-\floor{t/m_N-1}}.
    \end{split}
  \end{equation*}
\end{lemma}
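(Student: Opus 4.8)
The plan is to derive both identities in \lemref{l:tmixprop} directly from the strong stationary time property of $\tx$ established in \propref{prop:mixtime}, together with the deterministic geometric tail \eqref{prop:mixtime}(ii) and the independence \eqref{prop:mixtime}(iii). The first observation is that one can condition on the value of $\tx$, and then on the event $\{\tx = km_N\}$ for $km_N \le t$ one runs the chain from time $km_N$ to time $t$: by the strong stationary time property $Y_{km_N}$ has law $\nu$ (for any, possibly random, $Y_0$, hence for any starting distribution $\rho$), $Y_{km_N}$ is independent of $\tx$, and $\nu$ is stationary for $Y$, so $Y_t = Y_{t-km_N}\circ\theta_{km_N}$ still has law $\nu$ and is moreover independent of the event $\{\tx = km_N\}$. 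This is the content of a strong stationary time: conditioned on $\tx = s$, $Y_s \sim \nu$ independently of $s$, hence $Y_t \sim \nu$ for all $t \ge s$. Summing over $k$ with $km_N \le t$ gives $P^\tau_\rho[Y_t = x \mid \tx < t] = \nu_x$, which is the first claim. (A small care point: $\tx$ takes values in $m_N\mathbb{Z}_{>0}$, so the event $\{\tx < t\}$ is the same as $\{\tx \le \lfloor t/m_N\rfloor m_N\}$ when $t$ is not a multiple of $m_N$; this is harmless.)

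For the second claim I would decompose
\begin{equation*}
  P^\tau_\rho[Y_t = x] = P^\tau_\rho[Y_t = x \mid \tx < t]\,P^\tau_\rho[\tx<t]
  + P^\tau_\rho[Y_t = x, \tx \ge t].
\end{equation*}
By the first part the first term equals $\nu_x\,P^\tau_\rho[\tx < t] = \nu_x(1 - P^\tau_\rho[\tx \ge t])$, so
\begin{equation*}
  P^\tau_\rho[Y_t = x] - \nu_x = P^\tau_\rho[Y_t = x, \tx \ge t] - \nu_x\, P^\tau_\rho[\tx \ge t].
\end{equation*}
The right-hand side is a difference of two quantities, each lying in $[0, P^\tau_\rho[\tx \ge t]]$ (the first because $P^\tau_\rho[Y_t=x,\tx\ge t]\le P^\tau_\rho[\tx\ge t]$, the second because $\nu_x \le 1$), hence its absolute value is at most $P^\tau_\rho[\tx \ge t]$. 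It remains to identify $P^\tau_\rho[\tx \ge t]$ with $e^{-\lfloor t/m_N - 1\rfloor}$. Since $\tx$ is valued in $\{m_N, 2m_N, \dots\}$, the event $\{\tx \ge t\}$ coincides with $\{\tx \ge km_N\}$ where $k = \lceil t/m_N \rceil$, and by \propref{prop:mixtime}(ii) this has probability $e^{-(k-1)}$; since $\lceil t/m_N\rceil - 1 = \lfloor t/m_N - 1 \rfloor$ (using $\lceil u \rceil = \lfloor u \rfloor + 1$ for non-integer $u$, and handling the integer case separately), this gives exactly $e^{-\lfloor t/m_N - 1\rfloor}$.

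I do not anticipate a genuine obstacle here; this is a bookkeeping lemma packaging the output of \propref{prop:mixtime}. The one point that needs a line of justification is that the strong stationary property, which \propref{prop:mixtime} states for deterministic (or random) $Y_0$, propagates to an arbitrary starting distribution $\rho$ and, crucially, that on $\{\tx < t\}$ the chain at time $t$ is still distributed as $\nu$ and independent of the value of $\tx$ — this uses both the Markov property at the (randomized) stopping time $\tx$ and the fact that $\nu$ is invariant under the dynamics of $Y$. The floor/ceiling identity for $P^\tau_\rho[\tx \ge t]$ is the only mildly fiddly computation, and it is elementary. The whole argument is a handful of lines.
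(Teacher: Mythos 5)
Your proof is correct in essence and takes the route the paper intends: the lemma is stated without proof as a direct consequence of the construction of $\tx$ in Proposition~\ref{prop:mixtime}, and your argument---conditioning on the value of $\tx$, invoking the strong Markov property at the randomized stopping time $\tx$ together with the stationarity of $\nu$ to obtain the first identity, and then decomposing on $\{\tx<t\}$ vs.\ $\{\tx\ge t\}$ for the second---is exactly the argument being appealed to. There is, however, a small arithmetic slip at the end: for non-integer $u$ one has $\lceil u\rceil-1=\lfloor u\rfloor$, whereas $\lfloor u-1\rfloor=\lfloor u\rfloor-1$; these differ by one. So what you have actually computed is $P^\tau_\rho[\tx\ge t]=e^{-(\lceil t/m_N\rceil-1)}$, which is a factor $e$ \emph{smaller} than $e^{-\lfloor t/m_N-1\rfloor}$ when $t/m_N\notin\ZZ$. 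Read literally, the ``$=$'' in the lemma's second line is off by the same factor (the exact identity is $P^\tau_\rho[\tx>t]=e^{-\lfloor t/m_N\rfloor}$), but since that quantity is only ever used as an upper bound in the paper (e.g.\ in the covariance estimate in the proof of Proposition~\ref{prop:conclocal} and in Lemma~\ref{lem:increments}), this is harmless: your steps do yield the required inequality $\big|P^\tau_\rho[Y_t=x]-\nu_x\big|\le e^{-\lfloor t/m_N-1\rfloor}$, just not as an equality in the last step.
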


%------------------------------------------------------------------------------
%------------------------------------------------------------------------------
%------------------------------------------------------------------------------
%------------------------------------------------------------------------------

\section{Bounds on mean hitting time and random scale}%<<<1
\label{sec:meanhit}

In this section we prove bounds on the mean hitting time $E^{\tau}_{\nu}[H_x]$
of deep traps $x\in\cD_N$. As a corollary of the proof we will obtain a
useful bound on the Green function in deep traps. The bounds on the mean
hitting times will further imply bounds on the random scale $R_N$,
which will imply the claim \eqref{e:Rass} of \thmref{thm:main}.

\begin{proposition}\label{prop:meanhit}
There exists $\delta\in(0,1/6)$, such that $\PP$-a.s.~for $N$ large enough,
\begin{equation*}
	2^{N-N^{1-\delta}} \leq E^{\tau}_{\nu}[H_x] \leq 2^{N+N^{1-\delta}}
	\quad \text{for every $x\in\cD_N$}.
\end{equation*}
\end{proposition}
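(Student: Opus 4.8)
The plan is to read off $E^\tau_\nu[H_x]$ from the geometry of the level sets of $\tau$ near $x$. I would start from the standard identity, valid for every reversible continuous-time chain, $E^\tau_\nu[H_x]=\Phi_x/\nu_x$ with
\[
\Phi_x:=\int_0^\infty\big(P^{\tau}_x[Y_t=x]-\nu_x\big)\,dt,
\]
the diagonal entry of the fundamental matrix; by reversibility the integrand is nonnegative, so $\Phi_x\ge0$. Since $x\in\cD_N$ has $\tau_x>1$, hence $\nu_x=Z_N^{-1}$, this reads $E^\tau_\nu[H_x]=Z_N\Phi_x$, and by \eqref{e:boundzN} it suffices to prove $2^{-N^{1-\delta}}\le\Phi_x\le 2^{N^{1-\delta}}$ for a suitable small $\delta$ (any $\delta<1/2$ works for this statement; the bound $\delta<1/6$ is dictated by later use, in particular by the Green-function corollary). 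The upper bound is immediate from the fast mixing of \secref{sec:mix}: splitting the integral at the mixing scale $m_N$, estimating the local time on $[0,m_N]$ crudely by $m_N$, and using \lemref{l:tmixprop} for the tail,
\[
\Phi_x\le m_N+\int_{m_N}^\infty\big|P^{\tau}_x[Y_t=x]-\nu_x\big|\,dt\le m_N+\int_{m_N}^\infty e^{-\floor{t/m_N-1}}\,dt=O(m_N),
\]
which is polynomial in $N$, so $E^\tau_\nu[H_x]\le 2^N\,O(m_N)\le 2^{N+N^{1-\delta}}$ uniformly over all $x$ with $\tau_x>1$.

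For the lower bound the one real difficulty is that a deep trap $x$ may lie inside a cluster of vertices carrying atypically large $\tau$'s, which makes the equilibrium entry rate into the single site $x$, namely $\nu_x q_x=Z_N^{-1}\sum_{y\sim x}\tau_y$, possibly exponentially large; one must see that this does not make $x$ easier to reach. I would fix the threshold $T_0:=2^{N^{1-\delta}/2}$ and let $\cC_x$ be the connected component of $x$ in the superlevel set $\{v\in\HH_N:\tau_v\ge T_0\}$ (which contains $x$, since $\tau_x\ge g'_N\gg T_0$). Because $\{x\}\subseteq\cC_x$ we have $E^\tau_\nu[H_x]\ge E^\tau_\nu[H_{\cC_x}]$, and I would bound the latter below. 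Under $P^{\tau}_\nu$ the chain enters $\cC_x$ at total rate equal to the boundary conductance $b:=\sum_{u\in\cC_x,\,v\notin\cC_x}c_{uv}=Z_N^{-1}\sum_{u\in\cC_x,\,v\notin\cC_x}(\tau_u\wedge\tau_v)$, and the point is that every boundary edge satisfies $\tau_u\wedge\tau_v<T_0$, because $v\notin\cC_x$ forces $\tau_v<T_0$; hence $b<|\cC_x|\,N\,T_0/Z_N$. A Markov bound on the number of such entries during $[0,t]$ gives $P^{\tau}_\nu[H_{\cC_x}\le t]\le\nu(\cC_x)+bt$, and since $\nu(\cC_x)=|\cC_x|/Z_N$ is negligible this yields $E^\tau_\nu[H_x]\ge E^\tau_\nu[H_{\cC_x}]\ge c/b\ge c\kappa\,2^N/(|\cC_x|\,N\,T_0)$.

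It then remains to control $|\cC_x|$ uniformly over $\cD_N$, for which I would use a first moment estimate: $\EE|\cD_N|=2^{(1-\gamma')N}(1+o(1))$ by \lemref{lem:sparsetop}, while the probability that the $T_0$-component of a given deep trap has size $\ge s$ is at most $(CN\,\PP[\tau\ge T_0])^{s-1}$ (a union bound over connected $s$-vertex subtrees through $x$, each of the $s-1$ new vertices independently requiring $\tau\ge T_0$). Because $\delta<1/2$, the factor $\PP[\tau\ge T_0]=\exp(-\Theta(N^{1-2\delta}))$ beats every polynomial, so the expected number of deep traps with $|\cC_x|\ge s$ is summable in $N$ once $s\ge CN^{2\delta}$; by Borel--Cantelli, $\PP$-a.s.\ for $N$ large $\max_{x\in\cD_N}|\cC_x|\le CN^{2\delta}$ (the separation event of \lemref{lem:sparsetop} moreover ensures $\cC_x\cap\cD_N=\{x\}$). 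Inserting this and $T_0=2^{N^{1-\delta}/2}$ into the previous bound, the polynomial factor $|\cC_x|\,N$ is absorbed in the slack between $2^{N^{1-\delta}/2}$ and $2^{N^{1-\delta}}$, giving $E^\tau_\nu[H_x]\ge 2^{N-N^{1-\delta}}$ for $N$ large.

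The hard part is this lower bound: getting the uniform bound on $|\cC_x|$ over all $\sim 2^{\gamma'N}$ deep traps, and turning the heuristic ``equilibrium entry rate $=$ boundary conductance'' step — together with the mixing-based truncation of $\Phi_x$ — into clean estimates valid for $\PP$-a.e.\ environment; this is the role of the technical lemma proved in Appendix~\ref{appendix}. Finally, the Green-function bound at deep traps quoted in the introduction comes for free: the argument controls $\Phi_x=E^{\tau}_x[\ell_{m_N}(x)]+O(m_N\nu_x)$, i.e.\ essentially the expected local time the chain spends at $x$ before escaping its high-$\tau$ cluster, which up to sub-exponential factors is the Green-function quantity $E^{\tau}_x[\ell_{\tx}(x)]$ entering the definition \eqref{def:rN} of $R_N$.
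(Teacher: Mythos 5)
Your proof is correct, and it takes a genuinely different route from the paper on both halves. For the upper bound the paper invokes the spectral-gap estimate via Aldous--Fill (Lemma~3.17), $E^\tau_\nu[H_x]\le (1-\nu_x)/(\lambda_Y\nu_x)$, together with Proposition~\ref{prop:spectralgap}; you instead use the fundamental-matrix identity $E^\tau_\nu[H_x]=\Phi_x/\nu_x$ and bound $\Phi_x=O(m_N)$ by splitting the integral at the mixing scale and applying Lemma~\ref{l:tmixprop} to the tail. Both need the mixing-time machinery of Section~\ref{sec:mix}, so there is no real economy, but your version is cleaner in that it does not go through the eigenvalue at all. For the lower bound the paper constructs, via Lemma~\ref{lem:spheres}, a sphere $\partial B(x,\rho_x)$ of polynomial radius all of whose vertices have $\tau\le 2^{N^{1-\delta}/2}$, then applies the extremal characterization of Appendix~\ref{appendix} together with Rayleigh monotonicity and the parallel law; you take the ``dual'' geometric object --- the connected component $\cC_x$ of $x$ in the superlevel set $\{\tau\ge T_0\}$ --- show a.s.\ uniformly over $\cD_N$ that $|\cC_x|\le CN^{2\delta}$ by a first-moment lattice-animal count, and lower-bound $E^\tau_\nu[H_{\cC_x}]$ directly by a stationary crossing-rate argument ($P^\tau_\nu[H_{\cC_x}\le t]\le\nu(\cC_x)+bt$ with $b$ the total boundary conductance, which every boundary edge contributes $<T_0/Z_N$ to). This is a genuine simplification: it avoids the entire variational apparatus of the appendix and the Dirichlet-form manipulations, and does not need the specific ball structure of $A_x$. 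The one thing to flag: your closing remark that Corollary~\ref{cor:green} ``comes for free'' is too quick --- the corollary asserts a lower bound on $E^\tau_x[\ell_{H_{A_x^c}}(x)]$ for the particular ball $A_x$, not on $\Phi_x$, and the paper derives it from the effective-conductance bound $\cC(x\to A_x^c)\le c2^{-N+N^{1-\delta}}$ which your argument does not produce. If your proof were adopted, that corollary and its uses in Section~\ref{sec:meanhit} (Lemma~\ref{lem:boundrn}) and Lemma~\ref{lem:Wtx} would still require something like the spheres-plus-network-reduction route, so the appendix cannot actually be dropped from the paper.
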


The proof of \propref{prop:meanhit} is split in two parts.

\begin{proof}[Proof of the upper bound]
For the upper bound we use \cite[Lemma~3.17]{AFb} which states that
\begin{equation*}
  E^{\tau}_{\nu}[H_x] \leq \frac{1-\nu_x}{\lambda_Y \nu_x}.
\end{equation*}
Since $\tau_x\geq1$ for deep traps $x\in\cD_N$, this is smaller than
$\frac{Z_N}{\lambda_Y}$, which by \propref{prop:spectralgap} and
\eqref{e:boundzN} is bounded by $2^{N+N^{1-\delta}}$, $\PP$-a.s.~for $N$
large enough.
\end{proof}

For the lower bound we will use a version of Proposition~3.2 of~\cite{CTW11}
which allows to bound the inverse of the mean hitting time
$E^{\tau}_{\nu}[H_x]$ in terms of the effective
conductance from $x$ to a suitable set $B$. Recall the definition of the
conductances $c_{xy}$ from \eqref{def:cond}, and let
$c_x=\sum_{y\sim x}c_{xy}$. Following the terminology of
\cite[Chapter~2]{LP}, we define the effective conductance between a vertex
$x$ and a set $B$ as
\begin{equation*}
	\cC(x\to B) = P^{\tau}_x[H^+_x > H_B] c_x.
\end{equation*}
By Proposition~\ref{prop:appendix}, which is a generalization of
\cite[Proposition~3.2]{CTW11} to arbitrary continuous-time
finite-state-space Markov chains,
\begin{equation}
  \label{eq:boundbycond}
  \frac{1}{E^{\tau}_{\nu}[H_x]} \leq \cC(x\to B) \nu(B)^{-2}.
\end{equation}

To apply this bound effectively, we should
find a set $B$ such that $\mathcal C(x\to B)$ is small and $\nu (B)$ close
to $1$.
In the next lemma we construct such sets $B$ for every
$x\in \mathbb H_N$. For these sets we
have some control on the conductances connecting $B$ and $B^c$. Using
standard network
reduction techniques  we can then give
a bound on the effective conductance $\cC(x\to B)$, which when plugged
into \eqref{eq:boundbycond} will imply the lower bound on
$E^{\tau}_{\nu}[H_x]$.

Denote by $B(x,r)=\{y\in\HH_N,~d(x,y)\leq r\}$ the ball of radius
$r$ around $x$, and by $\partial B(x,r)=\{y\in\HH_N,~d(x,y) = r\}$ the sphere
of radius $r$.

\begin{lemma}\label{lem:spheres}
For every $\delta\in (0,1/6)$, $\PP$-a.s.~for $N$ large enough, there
exist radii $(\rho_x)_{x\in\HH_N}$ satisfying $1\leq \rho_x\leq N^{3\delta}$,
such that for all $x\in\HH_N$ and for all $y\in\partial B(x,\rho_x)$,
$\tau_y \leq 2^{\frac{1}{2}N^{1-\delta}}$.
\end{lemma}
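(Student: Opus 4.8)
The plan is to show that for each fixed $x$, among the $\lfloor N^{3\delta}\rfloor$ spheres $\partial B(x,1),\dots,\partial B(x,\lfloor N^{3\delta}\rfloor)$ around $x$, at least one must consist entirely of vertices $y$ with $\tau_y\le 2^{\frac12 N^{1-\delta}}$; then a union bound over all $x\in\HH_N$ together with Borel--Cantelli finishes the proof. First I would record that $\partial B(x,r)=\binom{N}{r}\le N^r$, so any single sphere with $r\le N^{3\delta}$ contains at most $N^{N^{3\delta}}=\exp\{N^{3\delta}\log N\}$ vertices, which is subexponential in $N$. Next, for a single vertex $y$, the Gaussian tail estimate \eqref{eq:gaussapprox} gives $\PP[\tau_y>2^{\frac12 N^{1-\delta}}]=\PP[E_y> \tfrac{\log 2}{2\beta}N^{\frac12-\delta}]\le \exp\{-c N^{1-2\delta}\}$ for some $c>0$, using that $\delta<1/6<1/2$. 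By a union bound over the at most $\exp\{N^{3\delta}\log N\}$ vertices in a fixed sphere $\partial B(x,r)$, the probability that this sphere contains \emph{some} vertex $y$ with $\tau_y>2^{\frac12 N^{1-\delta}}$ is at most $\exp\{N^{3\delta}\log N - cN^{1-2\delta}\}$; since $\delta<1/6$ implies $3\delta<1-2\delta$, this bound is $\exp\{-c'N^{1-2\delta}\}$ for $N$ large.

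Now the key step: the spheres $\partial B(x,1),\dots,\partial B(x,\lfloor N^{3\delta}\rfloor)$ are pairwise disjoint, so the events ``$\partial B(x,r)$ contains a vertex with $\tau_y$ too large'' are independent across $r$ (they depend on disjoint collections of the i.i.d.\ $E_z$'s). Hence the probability that \emph{every} one of these $\lfloor N^{3\delta}\rfloor$ spheres contains a bad vertex is at most $\big(\exp\{-c'N^{1-2\delta}\}\big)^{\lfloor N^{3\delta}\rfloor}\le \exp\{-c'' N^{1-2\delta+3\delta}\} = \exp\{-c'' N^{1+\delta}\}$. Defining $\rho_x$ to be the smallest $r\in\{1,\dots,\lfloor N^{3\delta}\rfloor\}$ for which $\partial B(x,r)$ has all its $\tau_y\le 2^{\frac12 N^{1-\delta}}$ (and, say, $\rho_x=1$ on the negligible complementary event), we get that $\rho_x$ fails to have the desired property with probability at most $\exp\{-c''N^{1+\delta}\}$.

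Finally I would take a union bound over the $2^N$ choices of $x\in\HH_N$: the probability that the conclusion of the lemma fails for some $x$ is at most $2^N\exp\{-c''N^{1+\delta}\}=\exp\{N\log 2 - c''N^{1+\delta}\}$, which is summable in $N$. Borel--Cantelli then gives that $\PP$-a.s.\ for $N$ large enough the radii $(\rho_x)_{x\in\HH_N}$ with $1\le\rho_x\le N^{3\delta}$ exist with the stated property. The only mildly delicate point is making sure the exponent arithmetic lines up, i.e.\ that the sphere sizes $\exp\{N^{3\delta}\log N\}$ are beaten by the tail decay $\exp\{-cN^{1-2\delta}\}$ and that after taking the product over $\lfloor N^{3\delta}\rfloor$ independent spheres one still beats the $2^N$ union bound over $x$; both hold comfortably because $\delta<1/6$ forces $3\delta<1-2\delta$ and $1+\delta>1$. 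There is no real obstacle here — the lemma is a routine first-moment/independence argument — the main thing to get right is the choice of the exponent $\tfrac12 N^{1-\delta}$ in the threshold so that it is large enough to kill the sphere-size union bound yet small enough (compared to $N$) to later give a nontrivial bound on the effective conductance in \propref{prop:meanhit}.
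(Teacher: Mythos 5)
Your proof is correct and follows essentially the same route as the paper: classify spheres as good or bad, bound the failure probability of a single sphere via the Gaussian tail and a union bound over its $\le N^r$ vertices, use independence of the disjoint spheres $\partial B(x,1),\dots,\partial B(x,N^{3\delta})$ to multiply these probabilities, and finish with a union bound over the $2^N$ centers and Borel--Cantelli. The exponent arithmetic (using $\delta<1/6$ to get $3\delta<1-2\delta$ and then $1+\delta>1$) also matches the paper's.
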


\begin{proof}
  Fix $\delta\in (0,1/6)$. We say that a sphere $\partial B(x,r)$ is good
  if  $\tau_y \leq 2^{\frac{1}{2}N^{1-\delta}}$ for all
  $y\in\partial B(x,r)$, otherwise we say that it is bad. Using the Gaussian
  tail approximation \eqref{eq:gaussapprox}, we get that
  \begin{equation*}
    \PP\big[\tau_y > 2^{\frac{1}{2}N^{1-\delta}}\big]
    \leq c e^{-\frac{\log^2 2}{8\beta^2}N^{1-2\delta}}.
  \end{equation*}
  The size of the sphere $\partial B(x,r)$ is bounded by $N^r$, hence the
  probability that the sphere $\partial B(x,r)$ is bad is bounded by
  \begin{equation*}
    N^r \PP\big[\tau_y > 2^{\frac{1}{2}N^{1-\delta}}\big]\leq
    c \exp\big\{r \log N - \frac{\log^2 2}{8\beta^2} N^{1-2\delta}\big\}.
  \end{equation*}
  By independence of the $\tau_x$, the probability that for one fixed $x$
  all the spheres $\partial B(x,r)$, $r=1,\dots,N^{3\delta}$, are bad is
bounded
  by
  \begin{align*}
    \prod_{r=1}^{N^{3\delta}} N^r \PP\big[\tau_y > 2^{\frac{1}{2}N^{1-\delta}}
      \big]
    &\leq \left(N^{N^{3\delta}} \PP\big[\tau_y > 2^{\frac{1}{2}N^{1-\delta}}
        \big]	\right)^{N^{3\delta}}\\
    &\leq	\exp\big\{N^{3\delta}\log c + N^{6\delta}\log N
      - \frac{\log^2 2}{8\beta} N^{1+\delta}\big\}.
  \end{align*}
  Finally, by a union bound, the probability that among all $2^N$ vertices in
  $\HH_N$ there is one for which all spheres
  $\partial B(x,r)$, $r=1,\dots,N^{3\delta}$, are bad is bounded by
  \begin{equation*}
    2^N \left(N^{N^{3\delta}}\PP\big[\tau_y>2^{\frac{1}{2}N^{1-\delta}}\big]
    \right)^{N^{3\delta}}
    \leq \exp\big\{N^{3\delta}\log c + N^{6\delta}\log N+N\log2
      -\frac{\log^2 2}{8\beta} N^{1+\delta}\big\}.
  \end{equation*}
  Since $\delta<1/6$ this decays faster than exponentially, and so by the
  Borel-Cantelli lemma the event occurs $\PP$-a.s.~only for finitely many $N$,
  i.e.~$\PP$-a.s.~for $N$ large enough we can find for every $x\in\HH_N$ a
	radius $\rho_x\leq N^{3\delta}$ such that the sphere $\partial B(x,\rho_x)$
	is good.
\end{proof}

\begin{proof}[Proof of the lower bound of \propref{prop:meanhit}]
  For every $x\in\cD_N$ we define the set $A_x=B(x,\rho_x)$ if the radius
  $\rho_x$ from \lemref{lem:spheres} exists, otherwise we take
  $A_x = \{x\}$. By \lemref{lem:spheres} and \eqref{e:boundzN}, $\PP$-a.s.~for $N$ large
  enough, for all $x\in\cD_N$ all conductances
  $c_{yz} = (\tau_y\wedge\tau_z)/Z_N$ connecting $A_x$ and $A_x^c$ are smaller
  than $2^{\frac{1}{2}N^{1-\delta}}/(\kappa 2^N)$.

  By the parallel law (cf.~\cite[Chapter~2.3]{LP}), the effective
  conductance between the boundaries of $A_x$ and $A_x^c$ is equal to the
  sum of all the conductances of edges connecting $A_x$ and $A_x^c$, and
  so $\PP$-a.s.~for $N$ large enough,
  \begin{equation*}
    \cC(\partial A_x\to\partial A_x^c) =
    \sum_{\substack{y\in\partial A_x \\ z \in\partial A_x^c}} c_{yz}
    \leq \kappa^{-1}N^{\rho_x+1} 2^{\frac{1}{2}N^{1-\delta}} 2^{-N}.
  \end{equation*}
  By Rayleigh's monotonicity principle (cf.~\cite[Chapter~2.4]{LP}), comparing
  the effective conductances from $x$ to $A_x^c$ before and after setting all
  the conductances inside $A_x$ to infinity, it follows that
  \begin{equation*}
    \cC(x\to A_x^c) \leq \cC(\partial A_x\to\partial A_x^c)
    \leq \kappa^{-1}N^{\rho_x+1} 2^{\frac{1}{2}N^{1-\delta}}2^{-N}.
  \end{equation*}
  Since $\delta<1/6$ and $\rho_x\leq N^{3\delta}$, we have
  $N^{\rho_x+1} \le 2^{\frac 12 N^{1-\delta}}$ for $N$ large enough, and
  thus, $\PP$-a.s.~for $N$ large enough,
  \begin{equation} \label{eq:effcond}
    \cC(x\to A_x^c) \leq c2^{-N+N^{1-\delta}}.
  \end{equation}
  Moreover, $\PP$-a.s.~for $N$ large enough, as
  $\nu_y = (1\wedge \tau_y)/Z_N \le 1/Z_N$, using \eqref{e:boundzN} again,
  \begin{equation}
    \label{e:Axsize}
    \nu(A_x^c) = 1-\nu (A_x)
    \geq 1-Z_N^{-1} |A_x| \ge 1- c 2^{-N} N^{N^{3\delta }}
    \xrightarrow{N\to \infty}1.
  \end{equation}
  Plugging \eqref{eq:effcond} and \eqref{e:Axsize} into
  \eqref{eq:boundbycond} and readjusting $\delta $ to accommodate for
  constants easily yields the required lower bound
  $E_\nu^\tau [H_x]\ge 2^{N-N^{1-\delta }}$. This completes the proof.
\end{proof}

As a corollary we get a lower bound on
$E^{\tau}_x[\ell_{H_{A_x^c}}(x)]$ for the deep traps $x\in \cD_N$.

\begin{corollary} \label{cor:green}
  There are constants $\delta\in(0,1/6)$ and $c>0$, such that $\PP$-a.s.~for $N$
  large enough, for all $x\in \cD_N$, under $P^{\tau}_x$ the local time of $Y$
  in $x$ before leaving $A_x$, $\ell_{H_{A_x^c}}(x)$, stochastically dominates
  an exponential random variable with mean $c2^{-N^{1-\delta}}$. In particular,
  $\PP$-a.s.~for $N$ large enough,
  \begin{equation*}
    E^{\tau}_x\big[\ell_{H_{A_x^c}}(x)\big] \geq c2^{-N^{1-\delta}}.
  \end{equation*}
\end{corollary}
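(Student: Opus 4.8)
The plan is to show that, under $P^{\tau}_x$, the occupation time $\ell_{H_{A_x^c}}(x)$ is \emph{exactly} an exponential random variable, and then to bound its mean from below using the effective conductance estimate \eqref{eq:effcond} already obtained in the proof of \propref{prop:meanhit}. Fix $x\in\cD_N$ and work on the $\PP$-full event (for $N$ large, by \lemref{lem:spheres}) on which $A_x=B(x,\rho_x)$ with $1\le\rho_x\le N^{3\delta}$, so that $x\in A_x$ and $A_x^c\ne\emptyset$; the degenerate choice $A_x=\{x\}$ does not occur for such $N$. Let $q_x=\sum_{y\sim x}q_{xy}$ be the total jump rate of $Y$ out of $x$, so that $c_x=\nu_x q_x$ by \eqref{def:cond}. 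Starting from $x$, the successive holding times of $Y$ at $x$ are i.i.d.\ $\mathrm{Exp}(q_x)$ and independent of the embedded jump chain; by the strong Markov property the number of visits of $Y$ to $x$ before time $H_{A_x^c}$ is geometric with parameter $p:=P^{\tau}_x[H_{A_x^c}<H^+_x]$. Hence $\ell_{H_{A_x^c}}(x)$ is a geometric sum of i.i.d.\ $\mathrm{Exp}(q_x)$ variables, which is $\mathrm{Exp}(p\,q_x)$. On the other hand, by the definition of the effective conductance, $\cC(x\to A_x^c)=P^{\tau}_x[H^+_x>H_{A_x^c}]\,c_x=p\,\nu_x q_x$, so the rate of this exponential equals $\cC(x\to A_x^c)/\nu_x$ and
\begin{equation*}
  E^{\tau}_x\bigl[\ell_{H_{A_x^c}}(x)\bigr] = \frac{\nu_x}{\cC(x\to A_x^c)}.
\end{equation*}

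It then remains to insert the bounds. Since $g'_N\to\infty$, we have $\tau_x\ge g'_N\ge 1$ for $N$ large, so $\nu_x=1/Z_N\ge 2^{-N}$ by \eqref{e:boundzN}. Together with \eqref{eq:effcond}, which gives $\cC(x\to A_x^c)\le c\,2^{-N+N^{1-\delta}}$, $\PP$-a.s.\ for $N$ large and uniformly in $x\in\cD_N$, this yields
\begin{equation*}
  E^{\tau}_x\bigl[\ell_{H_{A_x^c}}(x)\bigr] \ge \frac{2^{-N}}{c\,2^{-N+N^{1-\delta}}} = c^{-1}\,2^{-N^{1-\delta}}.
\end{equation*}
As $\ell_{H_{A_x^c}}(x)$ is exponentially distributed and an exponential law with larger mean stochastically dominates one with smaller mean, $\ell_{H_{A_x^c}}(x)$ stochastically dominates an exponential variable with mean $c^{-1}2^{-N^{1-\delta}}$; renaming the constant and, if necessary, slightly decreasing $\delta$ gives the first assertion, and the bound on $E^{\tau}_x[\ell_{H_{A_x^c}}(x)]$ is then immediate.

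The only genuinely structural point is the identification of $\ell_{H_{A_x^c}}(x)$ as an exact exponential with rate $\cC(x\to A_x^c)/\nu_x$; the quantitative content — the estimate \eqref{eq:effcond} on the effective conductance — has already been established in the proof of \propref{prop:meanhit}, so no real obstacle is expected beyond this bookkeeping. One should just take care that the constant $c$ here is the (possibly readjusted) constant from \eqref{eq:effcond}, and that the comparison is uniform over all deep traps $x\in\cD_N$, which it is since $\rho_x\le N^{3\delta}$ for every $x$.
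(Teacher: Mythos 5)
Your proof is correct and takes essentially the same route as the paper: you identify $\ell_{H_{A_x^c}}(x)$ as an exact exponential with rate $\cC(x\to A_x^c)/\nu_x = Z_N\cC(x\to A_x^c)$ (using $\nu_x=1/Z_N$ for $x\in\cD_N$), then plug in \eqref{eq:effcond} and \eqref{e:boundzN}. The only cosmetic difference is that the paper computes the mean directly as (expected number of visits) $\times$ (mean holding time) rather than passing through the rate, and it does not bother to note explicitly that $A_x\neq\{x\}$, but the substance is identical.
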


\begin{proof}
  The local time at $x$ before hitting $A_x^c$ is an exponential random variable
  with mean equal to
  \begin{equation*}
    E^{\tau}_x\big[\#\{\text{visits to $x$ before $H_{A_x^c}$}\}\big]\cdot
    E^{\tau}_x[J_1].
  \end{equation*}
  The expected number of visits before leaving $A_x$ is
  $P^{\tau}_x[H^+_x > H_{A_x^c}]^{-1}=c_x\cC(x\to A_x^c)^{-1}$. The mean
  duration of
  one visit to $x$ is $E^{\tau}_x[J_1]=(\sum_{y\sim x}q_{xy})^{-1}$. For
  the deep traps we have $\tau_x>1$, therefore
  $\sum_{y\sim x}q_{xy}=\sum_{y\sim x} c_{xy}/\nu_x=Z_Nc_x$. It follows that the local
  time at $x$ before hitting $A_x^c$ is in fact an exponential random variable with
  mean $Z_N^{-1}\cC(x\to A_x^c)^{-1}$. Using the bounds
  \eqref{eq:effcond} and \eqref{e:boundzN}, the claim follows easily.
\end{proof}

As a next consequence we give bounds on the random scale $R_N$ defined in
\eqref{def:rN}. Note that this lemma also proves the statement
\eqref{e:Rass} about the asymptotic behavior of $R_N$ in \thmref{thm:main}.

\begin{lemma}\label{lem:boundrn}
  For every $\varepsilon>0$, $\PP$-a.s.~for $N$ large enough,
  \begin{equation*}
    2^{(\gamma-\varepsilon)N} \leq R_N \leq 2^{(\gamma+\varepsilon)N}.
  \end{equation*}
\end{lemma}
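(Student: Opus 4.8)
The plan is to estimate the three factors in the definition \eqref{def:rN} of $R_N$ separately and then combine them. Recall
\[
R_N = 2^{(\gamma-\gamma')N}\left(\sum_{x\in \cD_N}
    \frac{E^{\tau}_x[\ell_{\tx}(x)^{\alpha}]}{E^{\tau}_{\nu}[H_x]}\right)^{-1}.
\]
First I would handle the denominators $E^{\tau}_{\nu}[H_x]$: by \propref{prop:meanhit}, $\PP$-a.s.~for $N$ large enough these all lie in $[2^{N-N^{1-\delta}},2^{N+N^{1-\delta}}]$, so up to a sub-exponential factor $2^{\pm N^{1-\delta}}$ they all equal $2^N$. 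Next, the number of summands is $|\cD_N|$, which by \lemref{lem:sparsetop} equals $2^{(1-\gamma')N}$ up to a factor $(1\pm\varepsilon)$. The main work is therefore to show that each numerator $E^{\tau}_x[\ell_{\tx}(x)^{\alpha}]$ is, for every $x\in\cD_N$ simultaneously, bounded above and below by sub-exponential quantities (in $N$). Granting that, the sum $\sum_{x\in\cD_N} E^{\tau}_x[\ell_{\tx}(x)^\alpha]/E^{\tau}_\nu[H_x]$ is $2^{(1-\gamma')N}\cdot 2^{-N}$ up to sub-exponential corrections, whence $R_N = 2^{(\gamma-\gamma')N}\cdot 2^{(\gamma'-1+1)N\cdot(\dots)} = 2^{\gamma N}$ up to sub-exponential factors, giving exactly the claimed bounds $2^{(\gamma\mp\varepsilon)N}$ after absorbing the corrections.

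For the numerator bounds: the \emph{lower} bound is the more delicate one, and I expect it to be the main obstacle. Since $\tx\ge m_N$ is polynomial in $N$ by construction (Proposition~\ref{prop:mixtime} and \eqref{e:mN}), and since $\ell_{\tx}(x)\ge \ell_{H_{A_x^c}\wedge \tx}(x)$, one natural route is: with probability bounded below (uniformly in $x\in\cD_N$, $\PP$-a.s.) the chain started at $x$ spends at least, say, time $1$ at $x$ before leaving $A_x$ \emph{and} before time $\tx$. By \corref{cor:green}, $\ell_{H_{A_x^c}}(x)$ started from $x$ stochastically dominates an exponential of mean $c\,2^{-N^{1-\delta}}$; this is tiny, so instead I would use that $\ell_{H_{A_x^c}}(x)$ is exponential with rate $Z_N\,\cC(x\to A_x^c)$ which by \eqref{eq:effcond} is at most $c\,2^{N^{1-\delta}}$, hence $P^\tau_x[\ell_{H_{A_x^c}}(x)\ge u]\ge e^{-cu2^{N^{1-\delta}}}$; taking $u = 2^{-2N^{1-\delta}}$ gives $P^\tau_x[\ell_{\tx}(x)\ge 2^{-2N^{1-\delta}}]\ge c'$ (also using $H_{A_x^c}<\tx$ with good probability, since the expected exit time of $A_x$ is much larger than $m_N$), so $E^\tau_x[\ell_{\tx}(x)^\alpha]\ge c'\,2^{-2\alpha N^{1-\delta}}$, which is sub-exponential. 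For the \emph{upper} bound: since $\tx\le$ a geometric number of steps times $m_N$ with exponential tails (Proposition~\ref{prop:mixtime}(ii)) and during any time interval $\ell_{\tx}(x)\le \tx$, one has $E^\tau_x[\ell_{\tx}(x)^\alpha]\le E^\tau_x[\tx^\alpha]\le E^\tau_x[\tx]^\alpha = (c\,m_N)^\alpha$, again polynomial in $N$ and hence sub-exponential.

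Assembling: $\PP$-a.s.~for $N$ large, the bracketed sum is sandwiched between $c\,2^{-2\alpha N^{1-\delta}}(1-\varepsilon)2^{(1-\gamma')N}2^{-N-N^{1-\delta}}$ and $c\,m_N^\alpha(1+\varepsilon)2^{(1-\gamma')N}2^{-N+N^{1-\delta}}$; inverting and multiplying by $2^{(\gamma-\gamma')N}$ yields $R_N$ between $c\,2^{(\gamma-\gamma')N}2^{(\gamma'-1)N}2^{-cN^{1-\delta}}=c\,2^{(\gamma-1+1)N-cN^{1-\delta}}$... more precisely $2^{(\gamma-\gamma')N+(\gamma'-1)N}\cdot(\text{sub-exp})=2^{(\gamma-1)N}$? — one must be careful: the exponent from the sum is $(1-\gamma')N-N=-\gamma'N$, so inverting gives $+\gamma'N$, and $2^{(\gamma-\gamma')N}\cdot 2^{\gamma'N}=2^{\gamma N}$, confirming the scaling; the discrepancies are the sub-exponential factors $m_N^{\pm\alpha}$, $2^{\pm N^{1-\delta}}$, $(1\pm\varepsilon)$, all of which are $o(2^{\varepsilon N})$ for any fixed $\varepsilon>0$ and $N$ large. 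Since the statement \eqref{e:Rass} of \thmref{thm:main} is just the assertion that $\log R_N/N\to\gamma\beta_c^2/2\cdot\beta_c^{-2}\cdot\dots$ — i.e.~$\to\gamma\log 2=\alpha^2\beta^2/2$ recalling $\gamma=\alpha^2\beta^2/\beta_c^2$ and $\beta_c^2=2\log 2$ — this lemma indeed implies \eqref{e:Rass}.
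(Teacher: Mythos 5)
Your proposal follows essentially the same route as the paper: bound the denominators $E^\tau_\nu[H_x]$ via \propref{prop:meanhit}, the number of terms via \eqref{eq:sizetop}, the numerators from above by $E^\tau_x[\tx^\alpha]\le cm_N^\alpha$ using \propref{prop:mixtime} and \eqref{e:mN}, and from below by restricting to $\ell_{H_{A_x^c}}(x)$ via \corref{cor:green}. The algebra assembling these into $R_N=2^{\gamma N+o(N)}$ is also the paper's.

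There is, however, one genuine gap in your lower bound for the numerator, namely the justification of $P^\tau_x[\tx\ge H_{A_x^c}]\to1$. You argue this holds ``since the expected exit time of $A_x$ is much larger than $m_N$,'' but this reasoning points the wrong way: if $H_{A_x^c}$ were typically much larger than $m_N\approx\tx$, that would make the event $\{H_{A_x^c}\le\tx\}$ \emph{less} likely, not more. (It is also not obviously true that $E^\tau_x[H_{A_x^c}]\gg m_N$, and no supporting estimate in the paper gives it directly.) The paper instead uses strong stationarity of $\tx$: on the event $\{\tx< H_{A_x^c}\}$ one has $Y_\tx\in A_x$, and by \propref{prop:mixtime}, $P^\tau_x[Y_\tx\in A_x]=\nu(A_x)\le \kappa^{-1}2^{-N}N^{N^{3\delta}}\le 2^{-\varepsilon N}$, which is exponentially small because $A_x$ is a ball of radius at most $N^{3\delta}$. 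This makes no appeal to the magnitude of $H_{A_x^c}$ at all. Also, your opening phrase about the chain spending ``at least time $1$'' at $x$ before exiting $A_x$ cannot hold with probability bounded below (the relevant local time is exponential with mean $\approx2^{-N^{1-\delta}}$, so time $1$ is doubly-exponentially unlikely), though you do correct this in the next sentence by choosing the threshold $2^{-2N^{1-\delta}}$, exactly as the paper does.
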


\begin{proof}
  By \propref{prop:mixtime}, $\tx/m_N$ is a geometric random variable with
  parameter $e^{-1}$, and thus
  $E^{\tau}_x[\ell_{\tx}(x)^{\alpha}] \leq E^{\tau}_x[\tx^{\alpha}]
  \leq c m_N^{\alpha}\leq e^{\epsilon N}$ by \eqref{e:mN}, for every
	$\epsilon>0$ and $N$ large enough.
	Moreover, $|\cD_N| \leq c' 2^{(1-\gamma')N}$ by
  \eqref{eq:sizetop}. Using the lower bound on $E^{\tau}_{\nu}[H_x]$ from
  \propref{prop:meanhit}, we obtain that for every $\varepsilon>0$,
	$\PP$-a.s.~for
  $N$ large enough,
  \begin{equation*}
    R_N = 2^{(\gamma-\gamma')N}\left(\sum_{x\in \cD_N}
      \frac{E^{\tau}_x[\ell_{\tx}(x)^{\alpha}]}
      {E^{\tau}_{\nu}[H_x]}\right)^{-1}
    \geq 2^{(\gamma-\varepsilon)N}.
  \end{equation*}

  For the upper bound we need a lower bound on
  $E^{\tau}_x[\ell_{\tx}(x)^{\alpha}]$. Recall the sets $A_x$ constructed
  in the proof of Proposition~\ref{prop:meanhit}, and note that
  \begin{equation}\label{eq:loctxalpha}
    E^{\tau}_x[\ell_{\tx}(x)^{\alpha}]
    \geq E^{\tau}_x\big[\Ind{\tx\geq H_{A_x^c}}\ell_{H_{A_x^c}}(x)^{\alpha}\big].
  \end{equation}
  By \corref{cor:green}, $\PP$-a.s.~for $N$ large enough, the local time at $x$
  before hitting $A_x^c$ stochastically dominates an exponential random
  variable with mean $c2^{-N^{1-\delta}}$, hence
  \begin{equation*}
    P^{\tau}_x\big[ \ell_{H_{A_x^c}}(x)
      \leq 2^{-2N^{1-\delta}}\big] \leq 1-e^{-c2^{-N^{1-\delta}}}
    \leq c2^{-N^{1-\delta}}.
  \end{equation*}
  Moreover, for every $\varepsilon>0$, $\PP$-a.s.~for $N$ large enough,
  \begin{equation*}
    P_x^{\tau}[\tx < H_{A_x^c}] \leq P_x^{\tau}[Y_{\tx}\in A_x]
    = \nu(A_x) \leq \kappa^{-1}2^{-N}N^{N^{3\delta}} \leq 2^{-\varepsilon N}.
  \end{equation*}
  Using the last two observations in \eqref{eq:loctxalpha}, $\PP$-a.s.~for
  $N$ large enough,
  \begin{align*}
    E^{\tau}_x&[\ell_{\tx}(x)^{\alpha}]
    \geq P^{\tau}_x\big[\{\tx\geq H_{A_x^c}\}\cap \{\ell_{H_{A_x^c}}(x)
        \geq 2^{-2N^{1-\delta}}\}\big] \left(2^{-2N^{1-\delta}}\right)^{\alpha } \\
    &\geq 2^{-2\alpha N^{1-\delta}}\left(P^{\tau}_x\big[\ell_{H_{A_x^c}}(x)
          \geq 2^{-2N^{1-\delta}}\big] - P^{\tau}_x\big[\{\ell_{H_{A_x^c}}(x)
          \geq 2^{-2N^{1-\delta}}\}\cap\{\tx < H_{A_x^c}\}\big]\right)\\
    &\geq 2^{-2\alpha N^{1-\delta}}\left(P^{\tau}_x\big[\ell_{H_{A_x^c}}(x)
        \geq 2^{-2N^{1-\delta}}\big] - P^{\tau}_x\big[\tx < H_{A_x^c}\big]
    \right)\\
    &\geq 2^{-2\alpha N^{1-\delta}}\left((1-c'2^{-N^{1-\delta}})
      - 2^{-\varepsilon N}\right)\\
    &\geq 2^{-\varepsilon N}.
  \end{align*}
  Combining this with $|\cD_N| \geq c 2^{(1-\gamma')N}$ by \eqref{eq:sizetop}
  and the upper bound on $E^{\tau}_{\nu}[H_x]$ from \propref{prop:meanhit}, we
  obtain the required upper bound on $R_N$.
\end{proof}

%------------------------------------------------------------------------------
%------------------------------------------------------------------------------
%------------------------------------------------------------------------------
%------------------------------------------------------------------------------

\section{Concentration of the local time functional}%<<<1
\label{sec:localtimes}

In this section we prove the concentration of the local time functional
that appears in the computation of the quasi-annealed Laplace transform of
the clock process on the deep traps, as explained in the introduction
(cf.~\eqref{eq:quasiannealed}). We denote this functional by
\begin{equation*}
  L_N(t) = 2^{(\gamma'-\gamma) N} \sum_{x\in \cD_N}
  \ell_{tR_N}(x)^{\alpha}.
\end{equation*}

So far we had no restriction on the choice of $\gamma'$ other than
$1/2<\gamma '<\gamma$, see \eqref{e:gammarange}. We now make an explicit
choice as follows. Let
$\varepsilon_0=\frac{1}{2}\left((1-\gamma)\wedge(\gamma-\frac{1}{2})\right)$,
and define $\gamma'= \gamma-\varepsilon_0$, such that in particular
\begin{gather}
	1-\gamma \geq 2\varepsilon_0, \label{eq:epsilon0} \\
	\gamma-\gamma'=\varepsilon_0. \label{eq:gamma1}
\end{gather}

The main result of this section is the following proposition.

\begin{proposition}\label{prop:conclocal}
  For every fixed $t\ge 0$, $\PP$-a.s.~for $N$ large enough,
  \begin{equation*}
    P^{\tau}_{\nu}\left[\left| L_N(t) - t\right|
      \geq 2^{-\frac{1}{5}\varepsilon_0N}\right]
    \leq c2^{-\frac{1}{10}\varepsilon_0N}.
  \end{equation*}
\end{proposition}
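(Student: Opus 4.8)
The plan is to run a second-moment argument: we estimate $E^{\tau}_{\nu}[L_N(t)]$ and $E^{\tau}_{\nu}[L_N(t)^2]$, show that $E^{\tau}_{\nu}[L_N(t)]=t+O(2^{-cN})$ and $\Var^{\tau}_{\nu}[L_N(t)]=O(2^{-\varepsilon_0 N/2})$ for some $c>0$, and conclude by Chebyshev's inequality. The virtue of the definition \eqref{def:rN} of $R_N$ is that the first moment then becomes automatic: it suffices to prove the \emph{single-trap estimate}, uniformly in $x\in\cD_N$,
\[
  E^{\tau}_{\nu}\big[\ell_{tR_N}(x)^{\alpha}\big] = \frac{tR_N}{E^{\tau}_{\nu}[H_x]}\,E^{\tau}_x\big[\ell_{\tx}(x)^{\alpha}\big]\,\big(1+O(2^{-cN})\big),
\]
together with the corresponding bound for the exponent $2\alpha$ (only as an upper bound, with an extra constant), for then summing over $x\in\cD_N$ and inserting \eqref{def:rN} gives $E^{\tau}_{\nu}[L_N(t)]=2^{(\gamma'-\gamma)N}tR_N\sum_{x\in\cD_N}E^{\tau}_x[\ell_{\tx}(x)^{\alpha}]/E^{\tau}_{\nu}[H_x]\cdot(1+o(1))=t(1+o(1))$. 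Throughout we work on the intersection of the $\PP$-a.s.\ events of \lemref{lem:sparsetop}, \propref{prop:spectralgap}, \propref{prop:mixtime}, \propref{prop:meanhit} and \lemref{lem:boundrn}, so in particular the deep traps are pairwise at distance $\geq\delta N$, the strong stationary time $\tx$ exists with law $m_N\times\mathrm{Geometric}(e^{-1})$, $2^{N-N^{1-\delta}}\leq E^{\tau}_{\nu}[H_x]\leq 2^{N+N^{1-\delta}}$, $R_N\in[2^{(\gamma-\varepsilon)N},2^{(\gamma+\varepsilon)N}]$ for any fixed $\varepsilon>0$, and $E^{\tau}_x[\ell_{\tx}(x)^{\alpha}],E^{\tau}_x[\ell_{\tx}(x)^{2\alpha}]\in[2^{-\varepsilon N},\mathrm{poly}(N)]$. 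All $O(\cdot),o(\cdot)$ are uniform over $x,y\in\cD_N$, and we assume $t>0$.

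To prove the single-trap estimate, note that $\ell_{tR_N}(x)=0$ unless $H_x\leq tR_N$, so by the strong Markov property $E^{\tau}_{\nu}[\ell_{tR_N}(x)^{\beta}]=E^{\tau}_{\nu}[\Ind{H_x\leq tR_N}\psi_{\beta}(tR_N-H_x)]$ with $\psi_{\beta}(s)=E^{\tau}_x[\ell_s(x)^{\beta}]$. Since $R_N\ll E^{\tau}_{\nu}[H_x]$, the chain started from $x$ returns to $x$ after its strong stationary time $\tx$, before time $tR_N$, only with probability $O(tR_N/E^{\tau}_{\nu}[H_x])=o(1)$; feeding this into a self-bounding argument for $\psi_{\beta}$ (using sub-additivity of $r\mapsto r^{\alpha}$ when $\beta=\alpha$, convexity when $\beta=2\alpha$) shows $\psi_{\alpha}(s)=E^{\tau}_x[\ell_{\tx}(x)^{\alpha}](1+o(1))$ and $\psi_{2\alpha}(s)\leq C\,E^{\tau}_x[\ell_{\tx}(x)^{2\alpha}]$ for all $m_N^2\leq s\leq tR_N$, the event $\{\tx>m_N^2\}$ being super-polynomially unlikely and the range $s<m_N^2$ entering only through $P^{\tau}_{\nu}[H_x\in(tR_N-m_N^2,tR_N]]=O(m_N^2/E^{\tau}_{\nu}[H_x])$, negligible against the main term. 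It then remains to evaluate $P^{\tau}_{\nu}[H_x\leq tR_N]$: invoking a suitable version of the exponential approximation for hitting times of small sets by fast-mixing reversible chains (cf.\ the estimates in \cite{AFb}), which uses the polynomial spectral-gap bound \propref{prop:spectralgap} and the mean-hitting-time estimate \propref{prop:meanhit}, one gets $P^{\tau}_{\nu}[H_x>s]=e^{-s/E^{\tau}_{\nu}[H_x]}(1+O(2^{-cN}))$ uniformly, and since $tR_N/E^{\tau}_{\nu}[H_x]=2^{-(1-\gamma)N+o(N)}\to0$ this gives $P^{\tau}_{\nu}[H_x\leq tR_N]=\tfrac{tR_N}{E^{\tau}_{\nu}[H_x]}(1+O(2^{-cN}))$. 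Combining the three facts yields the estimate.

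For the second moment we write $L_N(t)^2=2^{2(\gamma'-\gamma)N}\big(\sum_x\ell_{tR_N}(x)^{2\alpha}+\sum_{x\neq y}\ell_{tR_N}(x)^{\alpha}\ell_{tR_N}(y)^{\alpha}\big)$. The diagonal sum is controlled by the $2\alpha$-version of the single-trap estimate: with $E^{\tau}_x[\ell_{\tx}(x)^{2\alpha}]\leq\mathrm{poly}(N)$, $|\cD_N|\leq c2^{(1-\gamma')N}$ and $\gamma-\gamma'=\varepsilon_0$ (see \eqref{eq:gamma1}), it is at most $2^{2(\gamma'-\gamma)N}\cdot t\,2^{(\gamma-\gamma')N+o(N)}=t\,2^{-\varepsilon_0 N+o(N)}$. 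For the off-diagonal sum the key point is the near-factorisation, uniform in $x\neq y\in\cD_N$,
\[
  E^{\tau}_{\nu}\big[\ell_{tR_N}(x)^{\alpha}\ell_{tR_N}(y)^{\alpha}\big] = E^{\tau}_{\nu}\big[\ell_{tR_N}(x)^{\alpha}\big]\,E^{\tau}_{\nu}\big[\ell_{tR_N}(y)^{\alpha}\big]\,\big(1+O(2^{-cN})\big),
\]
which is where the mixing of $Y$ enters decisively. The left side vanishes unless both $x$ and $y$ are hit; conditioning on which is hit first, say $H_x<H_y$, and running the strong stationary time $\tx$ from the instant $H_x$, by time $H_x+\tx$ the chain has accumulated at $x$ a local time distributed as $\ell_{\tx}(x)$ under $P^{\tau}_x$, independent of the past and equal to $\ell_{tR_N}(x)$ up to $O(2^{-cN})$ (returns to $x$ being unlikely), and is again exactly in the stationary state $\nu$; from there it must still reach $y$ within the remaining time, which by the exponential approximation has probability $\tfrac{tR_N-H_x}{E^{\tau}_{\nu}[H_y]}(1+o(1))$, whereupon it collects at $y$ an independent local time distributed as $\ell_{\tx}(y)$ under $P^{\tau}_y$. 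The disjointness $A_x\cap A_y=\emptyset$ of the balls $A_x=B(x,\rho_x)$ (from the $\delta N$-separation of \lemref{lem:sparsetop} and $\rho_x\leq N^{3\delta}$ of \lemref{lem:spheres}) guarantees the two cluster visits do not overlap. Carrying out the elementary integral over $H_x$ — using that, conditionally on $H_x\leq tR_N$, $H_x$ is asymptotically uniform on $[0,tR_N]$, so $E^{\tau}_{\nu}[(tR_N-H_x)\Ind{H_x\leq tR_N}]=\tfrac{(tR_N)^2}{2E^{\tau}_{\nu}[H_x]}(1+o(1))$ — and adding the symmetric term $H_y<H_x$ reconstructs $\tfrac{(tR_N)^2}{E^{\tau}_{\nu}[H_x]E^{\tau}_{\nu}[H_y]}E^{\tau}_x[\ell_{\tx}(x)^{\alpha}]E^{\tau}_y[\ell_{\tx}(y)^{\alpha}]$, which by the single-trap estimate is the product on the right.

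Assembling, and using $E^{\tau}_{\nu}[\ell_{tR_N}(x)^{2\alpha}]\geq E^{\tau}_{\nu}[\ell_{tR_N}(x)^{\alpha}]^2$ (Cauchy--Schwarz), $\Var^{\tau}_{\nu}[L_N(t)]$ equals $2^{2(\gamma'-\gamma)N}$ times a sum over $x\in\cD_N$ of the non-negative terms $E^{\tau}_{\nu}[\ell_{tR_N}(x)^{2\alpha}]-E^{\tau}_{\nu}[\ell_{tR_N}(x)^{\alpha}]^2$ plus a sum over $x\neq y$ of the factorisation errors; the first is bounded by the diagonal estimate $t\,2^{-\varepsilon_0 N+o(N)}$, and the second by $2^{2(1-\gamma')N}\cdot 2^{2(\gamma'-\gamma)N}\cdot 2^{2(\gamma-1)N+o(N)}\cdot O(2^{-(1-\gamma)N})=2^{-(1-\gamma)N+o(N)}$. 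Since $1-\gamma\geq 2\varepsilon_0$ by \eqref{eq:epsilon0}, both terms are at most $\tfrac{1}{2}2^{-\varepsilon_0 N/2}$ for $N$ large, whence $\Var^{\tau}_{\nu}[L_N(t)]\leq 2^{-\varepsilon_0 N/2}$; together with $|E^{\tau}_{\nu}[L_N(t)]-t|\leq\tfrac{1}{2}2^{-\varepsilon_0 N/5}$ (the first-moment error being $O(2^{-(1-\gamma)N+o(N)})$), Chebyshev's inequality gives $P^{\tau}_{\nu}[|L_N(t)-t|\geq 2^{-\varepsilon_0 N/5}]\leq P^{\tau}_{\nu}[|L_N(t)-E^{\tau}_{\nu}[L_N(t)]|\geq\tfrac{1}{2}2^{-\varepsilon_0 N/5}]\leq 4\cdot 2^{2\varepsilon_0 N/5}\,\Var^{\tau}_{\nu}[L_N(t)]\leq 4\cdot 2^{-\varepsilon_0 N/10}$, which is the claim. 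The main obstacle is the sharp, uniform control of the single-trap hitting law — the exponential approximation $P^{\tau}_{\nu}[H_x>s]\approx e^{-s/E^{\tau}_{\nu}[H_x]}$ with a relative error that stays summably small over the $\sim 2^{(1-\gamma')N}$ deep traps — on which both the single-trap estimate and, through it, the two-trap factorisation rest, and which is precisely why the polynomial spectral gap \propref{prop:spectralgap} and the mean hitting times \propref{prop:meanhit} were needed; the combinatorial decoupling of two cluster visits via the strong stationary time is the second delicate point.
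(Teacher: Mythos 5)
Your overall scheme --- estimate $E^{\tau}_{\nu}[L_N(t)]$ and $\Var^{\tau}_{\nu}[L_N(t)]$, then apply Chebyshev --- is the same as the paper's, but you work directly with the moments of $L_N(t)$ while the paper first replaces $L_N(t)$ by a time-blocked, truncated approximation $\sum_{k=1}^K U_N^k(t)$ (Lemma~\ref{lem:ltxk}) and only then computes moments of the truncated sum. That substitution is not cosmetic: it is the technical device that makes the moment calculations feasible, and skipping it creates a genuine gap. The quantity $\ell_{tR_N}(x)^{\alpha}$ is unbounded, and your single-trap estimate with $1+O(2^{-cN})$ relative error is not established by the argument you give. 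On the event of a second visit after mixing, whose probability is of order $2^{2(\gamma-1)N+o(N)}$, you only have the trivial bound $\ell_{tR_N}(x)\le tR_N$; this gives a contribution of order $(tR_N)^{\alpha}\,2^{2(\gamma-1)N+o(N)}=2^{(\alpha\gamma+2\gamma-2)N+o(N)}$, which exceeds the main term $2^{(\gamma-1)N+o(N)}$ whenever $\gamma(\alpha+1)>1$ --- a region of parameters allowed by \eqref{cond:alphabeta}. The paper's $\ell^k_{t,x}$ is deterministically bounded by $N^2m_N$ by construction, and Lemma~\ref{lem:ltxk} (using the events in \eqref{e:520} and Lemma~\ref{lem:nolargelocaltimes}) shows that this truncation is harmless; without an analogous truncation the first-moment error simply cannot be made exponentially small as you assert.

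The second gap is in the two-trap factorisation. You claim $E^{\tau}_{\nu}[\ell_{tR_N}(x)^{\alpha}\ell_{tR_N}(y)^{\alpha}]=E^{\tau}_{\nu}[\ell_{tR_N}(x)^{\alpha}]E^{\tau}_{\nu}[\ell_{tR_N}(y)^{\alpha}]\bigl(1+O(2^{-cN})\bigr)$ \emph{uniformly} in $x\neq y$, but the decoupling error is governed by $P^{\tau}_x[H_y\le\tx]$, which is not controlled for individual pairs $(x,y)$; the paper (Lemma~\ref{lem:Wtx}) controls only the sum $\sum_{y\neq x}P^{\tau}_x[H_y\le\tx]=E^{\tau}_x[\cW^x_\tx]\le 2^{\varepsilon N}$, and its variance estimate is carefully arranged so that only this sum is needed. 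Moreover, the independence you invoke --- that after running $\tx$ from $H_x$ the accumulated local time at $x$ is ``independent of the past'' and the chain restarts from $\nu$ independently --- overstates what Proposition~\ref{prop:mixtime} delivers: $Y_\tx$ is $\nu$-distributed and independent of $\tx$, but $\ell_\tx(x)$ is \emph{not} independent of $Y_\tx$ (the acceptance probability in the construction depends on the endpoint), so the pre-$\tx$ local time and the post-$\tx$ trajectory are not independent. The paper never needs such independence: its covariance control splits into (i) a union-bound upper estimate on $P_{\nu}^{\tau}[H_x,H_y\le tR_N/K]$ using only the one-sided consequence $E^{\tau}_x[P^{\tau}_{Y_\tx}[\,\cdot\,]]=P^{\tau}_{\nu}[\,\cdot\,]$ of the strong Markov property at $\tx$, and (ii) a TV-mixing bound \eqref{e:cova} for covariances across time blocks. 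Your proposal would need both the time-block decomposition and the sum-over-$y$ formulation of Lemma~\ref{lem:Wtx} to close these gaps; at that point it essentially becomes the paper's proof.
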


\begin{proof}
  We approximate $L_N(t)$ by the sum of essentially independent
  random variables as follows. Let
  $K=\floor{2^{\varepsilon_0N}}$. For a fixed $t>0$, define
  \begin{equation*}
    t_k=\frac{tR_N}{K} k, \qquad k=0,\dots,K.
  \end{equation*}
  Recall the notation \eqref{e:theta}. For
  every $x\in\cD_N$ and
  $k=1,\dots,K$, define $H^k_x=t_{k-1}+H_x\circ \theta_{t_{k-1}}$ to be the
  time of the first visit to $x$ after $t_{k-1}$, and  set
  \begin{equation*}
    \ell_{t,x}^k = \left(\int_{H^k_x\wedge (t_{k}-2N^2m_N)}^{(H^k_x+N^2m_N)\wedge (t_{k}-N^2m_N)}
      \Ind{Y_s=x}ds\right)^{\alpha}.
  \end{equation*}
  The random variable $\ell_{t,x}^k$
  gives `roughly' the $\alpha$-th power of the time that $Y$ spends in $x$
  between $t_{k-1}$ and $t_k-N^2m_N$, with some suitable truncations.
  Let further
  \begin{equation*}
    U_N^k(t) = 2^{(\gamma'-\gamma)N}\sum_{x\in\cD_N} \ell_{t,x}^k.
  \end{equation*}

  The next lemma, which we prove later, shows that the sum of the
  $U_N^k(t)$'s is a good approximation for $L_N(t)$.
  \begin{lemma}
    \label{lem:ltxk}
    For every $t>0$, $\PP$-a.s.~for $N$ large enough,
    \begin{equation*}
      P_{\nu}^{\tau}\bigg[L_N(t)\neq\sum_{k=1}^K U_N^k(t)\bigg]
      \leq c2^{-\frac{1}{2}\varepsilon_0N}.
    \end{equation*}
  \end{lemma}

  With \lemref{lem:ltxk}, the proof of the proposition reduces to
  understanding of the approximating sum $\sum_{k=1}^K U_N^k(t)$. We will
  compute its expectation and variance
  under $P_{\nu}^{\tau}$. In particular, we will show that there is
  $c<\infty$ such that for every $t>0$,
  \begin{equation} \label{eq:expsumk}
    \bigg | E_{\nu}^{\tau}\Big[\sum_{k=1}^K U_N^k(t)\Big]
    - t \bigg| \le  c2^{-2\varepsilon_0N}, \qquad \mathbb P\text{-a.s.~as
      $N\to\infty$},
  \end{equation}
  and
  \begin{equation}
    \label{e:varest}
    \Var_{\nu}^{\tau}\bigg(\sum_{k=1}^K U_N^k(t)\bigg)
    \le c2^{-\frac 12 \varepsilon_0 N}, \qquad \mathbb P\text{-a.s.~as
      $N\to\infty$}.
  \end{equation}
  The statement of the proposition then follows from \lemref{lem:ltxk},
  \eqref{eq:expsumk} and \eqref{e:varest} by routine application of the
  Chebyshev inequality. Indeed, $\PP$-a.s.~for $N$ large enough,
  \begin{equation*}
    \begin{split}
      P^{\tau}_{\nu}&\bigg[\left| L_N(t) - t\right|
        \geq 2^{-\frac{1}{5}\varepsilon_0N}\bigg]\\
      &\leq P_{\nu}^{\tau}\bigg[L_N(t)\neq\sum_{k=1}^K U_N^k(t)\bigg]
      + P_{\nu}^{\tau}\bigg[\bigg|\sum_{k=1}^K U_N^k(t)-E_{\nu}^{\tau}
        \bigg[\sum_{k=1}^K U_N^k(t)\bigg]\bigg|
        \geq 2\cdot 2^{-\frac{1}{5}\varepsilon_0N}\bigg]\\
      &\leq c2^{-\frac{1}{2}\varepsilon_0N} + c' 2^{-\frac{1}{10}\varepsilon_0N}
      \leq c'' 2^{-\frac{1}{10}\varepsilon_0N},
    \end{split}
  \end{equation*}
  which is the claim of the proposition.

  We proceed by computing the expectation \eqref{eq:expsumk}. We will need two lemmas
  which we show later. The first lemma estimates the probability that a deep
  trap is visited by the process $Y$.

  \begin{lemma} \label{lem:approxhit}
    For every $t_N$ such that $1\leq t_N\leq2^N$, for every $\varepsilon>0$,
		$\PP$-a.s.~for $N$ large enough, for
    all $x\in\cD_N$,
    \begin{equation*}
      P^{\tau}_{\nu}[H_x\leq t_N ]
      = \frac{t_N} {E^{\tau}_{\nu}[H_x]} + O\big(t_N^2 2^{2(\varepsilon-1)N}\big)
      +O\big(2^{(\varepsilon-1)N}\big)\\
      \le c t_N 2^{(\varepsilon-1)N}.
    \end{equation*}
  \end{lemma}

  The second lemma then gives the expected contribution of a single
	$\ell_{t,x}^k$ to $\sum_{k=1}^K U_N^k(t)$.

  \begin{lemma} \label{lem:explocalalpha}
    For every fixed $t>0$, $k=1,\dots,K$ and $\varepsilon>0$, $\PP$-a.s.~for $N$ large enough, for
    all $x\in\cD_N$,
    \begin{equation*}
      E_{\nu}^{\tau}\big[\ell_{t,x}^k\big]
      = \frac{tR_N}{KE_{\nu}^{\tau}[H_x]}
      E_{x}^{\tau}\big[\ell_{\tx}(x)^{\alpha}\big]
      + O\big(2^{(2\gamma+3\varepsilon-2\varepsilon_0-2)N}\big).
	\end{equation*}
  \end{lemma}

  With Lemma~\ref{lem:explocalalpha} it is easy to compute the
  expectation \eqref{eq:expsumk}. Using that
  $|\cD_N| \leq c2^{(1-\gamma')N}$ by \eqref{eq:sizetop},
  and the definition \eqref{def:rN} of $R_N$,
  for every $\varepsilon>0$, $\PP$-a.s.~for $N$ large enough,
  \begin{align*}
    E^{\tau}_{\nu}\bigg[\sum_{k=1}^K U_N^k(t)\bigg]
    &= 2^{(\gamma'-\gamma) N}\sum_{x\in \cD_N}\sum_{k=1}^K
    \left(\frac{tR_N }{KE^{\tau}_{\nu}[H_x]}E^{\tau}_x[\ell_{\tx}(x)^{\alpha}]
      + O\big(2^{(2\gamma+3\varepsilon-2\varepsilon_0-2) N}\big)\right)\\
    &= t + O\big(2^{(\gamma'-\gamma)N}2^{(1-\gamma')N}
      2^{(2\gamma-2+3\varepsilon-\varepsilon_0) N}\big)\\
    &= t + O\big(2^{(\gamma-1+3\varepsilon-\varepsilon_0) N}\big).
  \end{align*}
  Choosing $\varepsilon<\varepsilon_0/3$ and recalling \eqref{eq:epsilon0}
  implies \eqref{eq:expsumk}.

  Next, we estimate the variance \eqref{e:varest}. Since $\nu $ is the stationary measure for
  $Y$, the random variables $U_N^k(t)$, $k=1,\dots,K$, are identically
  distributed under $P_{\nu}^{\tau}$. Hence
  \begin{equation} \label{eq:varsumL}
    \Var_{\nu}^{\tau}\bigg(\sum_{k=1}^K U_N^k(t)\bigg)
    = K\Var_{\nu}^{\tau}\big(U_N^1(t)\big)
    + 2 \sum_{1\le k< j\le K}\Cov_{\nu}^{\tau}\big(U_N^k(t),U_N^j(t)\big).
  \end{equation}

  The covariances can be  neglected easily. Indeed, since by definition
  $U_N^k(t)$ depends on the trajectory of $Y$ between times $t_{k-1}$ and
  $t_{k}-N^2 m_N$ only, we can use the Markov property at the later time to write
  \begin{equation}
    \label{e:cova}
    \Cov_\nu^\tau \big(U_N^k(t),U_N^j(t)\big) =
    E_\nu^\tau \big[\big(U^k_N(t)-E_\nu^\tau U^k_N(t)\big)
      E^\tau [U^j_N(t)- E_\nu^\tau  U^j_N(t)|Y_{t_k-N^2 m_N}]\big].
  \end{equation}
  By Lemma~\ref{l:tmixprop},
  $\big|P^\tau [Y_{t_k}=y|Y_{t_k-N^2 m_N}]-\nu_y\big| \le e^{-c N^2}$. Using in
  addition that $U^N_j\le e^{c'N}$ for some sufficiently large $c'$, we see
  that the inner expectation satisfies
  \begin{equation*}
    \big|E^\tau [U^j_N(t)- E_\nu^\tau  U^j_N(t)|Y_{t_k-N^2 m_N}]\big|
    \le e^{-c N^2 /2}.
  \end{equation*}
  Inserting this inequality back to \eqref{e:cova} and summing over $k<j$
  then implies that the second term in \eqref{eq:varsumL} is
  $O(e^{-cN^2})$ and thus can be neglected when proving \eqref{e:varest}.

  To control the variance of $U_N^1(t)$ in \eqref{eq:varsumL}, it is enough to bound its second
  moment, which is
  \begin{equation*}
    E_{\nu}^{\tau}\big[U_N^1(t)^2\big]
    = 2^{2(\gamma'-\gamma)N}\left(\sum_{x\in\cD_N}
      E_{\nu}^{\tau}\big[(\ell_{t,x}^1)^2\big] + \sum_{x\neq y \in \cD_N}
      E_{\nu}^{\tau}[\ell_{t,x}^1 \ell_{t,y}^1]\right).
  \end{equation*}
  Since, by definition, $\ell^1_{t,x}\le N^2 m_N$ and $\ell^1_{t,x}\neq 0$
  implies $H_x\le tR_N/K$,
  \begin{align}
    E_{\nu}^{\tau}\big[U_N^1(t)^2\big]
    &\leq 2^{2(\gamma'-\gamma)N}N^{4\alpha}m_N^{2\alpha}
    \Bigg(\sum_{x\in\cD_N}P_{\nu}^{\tau}\bigg[H_x\leq \frac{tR_N}{K}\bigg]
      + \sum_{x\neq y \in \cD_N} P_{\nu}^{\tau}
      \bigg[H_x,H_y\leq \frac{tR_N}{K}\bigg]\Bigg).
    \label{eq:varL12}
  \end{align}

  By \lemref{lem:approxhit} and \lemref{lem:boundrn},
  for every $\varepsilon>0$, $\PP$-a.s.~as $N\to\infty$,
  \begin{equation}\label{eq:hitK}
    P_{\nu}^{\tau}\bigg[H_x\leq \frac{tR_N}{K}\bigg]
    \le c2^{(\gamma-1+\varepsilon-\varepsilon_0)N}.
  \end{equation}
  Moreover, by \eqref{eq:sizetop}, $|\cD_N| \le c2^{(1-\gamma')N}$, and
  by \eqref{e:mN}, $N^{4\alpha}m_N^{2\alpha}\leq 2^{\varepsilon N}$, for every
  $\varepsilon>0$ and $N$
  large enough. It follows that the contribution of the first sum in
  \eqref{eq:varL12} to the variance, including the prefactor $K=2^{\varepsilon_0N}$ from
  \eqref{eq:varsumL}, can be bounded by
  \begin{equation*}
    c2^{(2(\gamma'-\gamma) +1-\gamma' + \gamma-1+2\varepsilon )N}
    = c 2^{(\gamma'-\gamma+2\varepsilon)N}.
  \end{equation*}
  By \eqref{eq:gamma1},
  $\gamma'-\gamma+2\varepsilon\leq-\varepsilon_0+2\varepsilon<-\frac{1}{2}\varepsilon_0$
  for $\varepsilon<\varepsilon_0/4$, and hence this contribution is
  smaller than
  $c2^{-\frac 12 \varepsilon_0 N}$ as required for \eqref{e:varest}.

  For the second summation in \eqref{eq:varL12} we write
  \begin{equation*}
    P_{\nu}^{\tau}\bigg[H_x,H_y\leq \frac{tR_N}{K}\bigg]
    \leq P_{\nu}^{\tau}\bigg[H_x < H_y\leq \frac{tR_N}{K}\bigg]
    + P_{\nu}^{\tau}\bigg[H_y < H_x \leq \frac{tR_N}{K}\bigg].
  \end{equation*}
  By the Markov property, each of these two probabilities can be bounded by
  \begin{align*}
    P_{\nu}^{\tau}\bigg[H_x < H_y\leq \frac{tR_N}{K}\bigg]
    &= \int_0^{\frac{tR_N}{K}} P_{\nu}^{\tau}[H_x\in du]
    P_x^{\tau}\bigg[H_y< \frac{tR_N}{K}-u\bigg]\\
    &\leq \int_0^{\frac{tR_N}{K}} P_{\nu}^{\tau}[H_x\in du]
    \left(P_x^{\tau}[H_y\leq \tx]
      + P_{\nu}^{\tau}\bigg[H_y\leq \frac{tR_N}{K}\bigg] \right)\\
    &\leq P_{\nu}^{\tau}\bigg[H_x\leq \frac{tR_N}{K}\bigg]
    \left(P_x^{\tau}[H_y\leq \tx]
      + P_{\nu}^{\tau}\bigg[H_y\leq \frac{tR_N}{K}\bigg]\right).
  \end{align*}
  Using \eqref{eq:hitK} and \eqref{eq:sizetop} again,
  the second sum in \eqref{eq:varL12} is bounded by
  \begin{equation}
    \label{e:blabla}
    c 2^{(\gamma -1+\varepsilon -\varepsilon_0)N}
    \Big( 2^{2(1-\gamma ')N} 2^{(\gamma -1+\varepsilon -\varepsilon_0)N}
      + \sum_{x\neq y \in \mathcal D_N}
      P^\tau_x[H_y\leq\tx]\Big).
  \end{equation}
  The first term in the parentheses of \eqref{e:blabla} together with the
  prefactors  $K$ from \eqref{eq:varsumL} and
  $2^{2(\gamma'-\gamma)N}N^{4\alpha}m_N^{2\alpha} \leq 2^{(2(\gamma'-\gamma)+\varepsilon)N}$
  from \eqref{eq:varL12}, contributes to the variance by at most
  \begin{equation*}
    c2^{(\varepsilon_0+2(\gamma'-\gamma)+\varepsilon + 2(1-\gamma')
          + 2(\gamma -1 +\varepsilon-\varepsilon_0))N}
      = c2^{(3\varepsilon - \varepsilon_0)N}\le c2^{-\frac 12
        \varepsilon_0 N}
  \end{equation*}
  if $\varepsilon $ is small enough, as required by \eqref{e:varest}.

  For the second term in the parentheses of \eqref{e:blabla} we need the
  following lemma whose proof is again postponed.

  \begin{lemma} \label{lem:Wtx}
    Let $\cW^x_t=\sum_{y\in\cD_N,y\neq x} \Ind{H_y\leq t}$. Then for every
    $\varepsilon>0$, $\PP$-a.s.~for $N$ large enough, for every $x\in \cD_N$,
    \begin{equation*}
      E^{\tau}_x[\cW^x_{\tx}] \leq 2^{\varepsilon N}.
    \end{equation*}
  \end{lemma}

  Using \lemref{lem:Wtx}, and including all the prefactors as before, the
  contribution of the second term in \eqref{e:blabla} to the variance \eqref{eq:varsumL}
  is bounded by
  \begin{equation*}
    c2^{(\varepsilon_0 + 2(\gamma'-\gamma)+\epsilon + 1-\gamma'
          + \gamma-1+\varepsilon-\varepsilon_0 + \varepsilon )N}
      = c \, 2^{ (\gamma'-\gamma + 3\varepsilon )N}
      \le 2^{-\frac{1}{2}\varepsilon_0N},
  \end{equation*}
  where for the last inequality we used \eqref{eq:gamma1} again, and
  choose $\varepsilon $ small enough.
  This completes the proof of \eqref{e:varest} and thus of the
  proposition.
\end{proof}

We proceed by proving the lemmas used in the above proof.

\begin{proof}[Proof of \lemref{lem:approxhit}]
  By \cite[Theorem~1]{AB92} the hitting time $H_x$ is
  approximately exponential in the sense that
  \begin{equation*}
    \left|P^{\tau}_{\nu}[H_x>t]-e^{-\frac{t}{E^{\tau}_{\nu}[H_x]}}\right|
    \leq \frac{1}{\lambda_Y E^{\tau}_{\nu}[H_x]}.
  \end{equation*}
  Hence, using Propositions~\ref{prop:spectralgap} and \ref{prop:meanhit}
  to bound $\lambda_Y$ and $E^{\tau}_{\nu}[H_x]$ respectively, we
  have for every $\varepsilon>0$, $\PP$-a.s.~for $N$ large enough,
  \begin{equation*}\begin{split}
      P^{\tau}_{\nu}[H_x\leq t_N ] &= (1-e^{-\frac{t_N}{E^{\tau}_{\nu}[H_x]}})
      + O\big(2^{(\varepsilon-1)N}\big)\\
      &= \frac{t_N}{E^{\tau}_{\nu}[H_x]}
      + O\big(t_N^2 2^{2(\varepsilon-1)N}\big)
      + O\big(2^{(\varepsilon-1)N}\big).
  \end{split}\end{equation*}
  Finally, if $1\leq t_N\leq 2^N$ this is bounded by $ct_N2^{(\varepsilon-1)N}$,
  which proves the lemma.
\end{proof}

\begin{proof}[Proof of \lemref{lem:explocalalpha}]
  By the strong Markov property and the definition of $\ell^k_{t,x}$,
  \begin{equation}
    \label{eq:expltxk}
    \begin{split}
      E_{\nu}^{\tau}\big[\ell_{t,x}^k\big] &\ge
      P_{\nu}^{\tau}\big[H_x\in[t_{k-1},t_k-2N^2m_N]\big]
      E_x^{\tau}\big[\ell_{N^2m_N}(x)^{\alpha}\big],\\
      E_{\nu}^{\tau}\big[\ell_{t,x}^k\big]
      &\leq P_{\nu}^{\tau}\big[H_x\in[t_{k-1},t_k-N^2m_N]\big]
      E_x^{\tau}\big[\ell_{N^2m_N}(x)^{\alpha}\big].
  \end{split}\end{equation}
  We will now give approximations of the expressions appearing in
  \eqref{eq:expltxk}.

  Observe that for every $s,t>0$,
  \begin{equation*}
    \ell_{t}(x)^{\alpha} \leq \ell_{s}(x)^{\alpha}
    + (\ell_{t}(x)-\ell_s(x))^{\alpha}.
  \end{equation*}
  Using this inequality with $t=N^2m_N$ and $s=\tx$ and applying the strong Markov
  property at $\tx$, observing that
  $Y_\tx$ is $\nu $-distributed,
  \begin{equation*}
    E_x^{\tau}\big[\ell_{N^2m_N}(x)^{\alpha}\big]
    \leq E_x^{\tau}\big[\ell_{\tx}(x)^{\alpha}\big]
    + E_{\nu}^{\tau}\big[\ell_{N^2m_N}(x)^{\alpha}\big].
  \end{equation*}
  By \lemref{lem:approxhit}, using also that by \eqref{e:mN},
  $\ell_{N^2m_N}(x)^{\alpha}\leq N^{2\alpha}m_N^{\alpha} \leq 2^{\varepsilon N}$
  for every $\varepsilon>0$
  and $N$ large enough,
  \begin{equation*}
    E_{\nu}^{\tau}\big[\ell_{N^2m_N}(x)^{\alpha}\big]
    \leq P_{\nu}^{\tau}\big[H_x\leq N^2m_N] 2^{\varepsilon N}
    \leq c2^{(3\varepsilon-1)N}.
  \end{equation*}
  Hence we obtain the upper bound
  \begin{equation}\label{eq:blubbup}
    E_x^{\tau}\big[\ell_{N^2m_N}(x)^{\alpha}\big]
    \leq E_x^{\tau}\big[\ell_{\tx}(x)^{\alpha}\big]
    + c2^{(3\varepsilon-1)N}.
  \end{equation}
  For a matching lower bound, note that
  \begin{equation*}
    E_x^{\tau}\big[\ell_{N^2m_N}(x)^{\alpha}\big]
    \geq E_x^{\tau}\big[\ell_{\tx}(x)^{\alpha}
      \Ind{\tx\leq N^2m_N}\big].
  \end{equation*}
  But from Proposition~\ref{prop:mixtime} it follows that
  \begin{equation*}
    E^{\tau}_x[\ell_{\tx}(x)^{\alpha}\Ind{\tx>N^2m_N}]
    \leq E^{\tau}_x[\tx^{\alpha}\Ind{\tx>N^2m_N}]
    \leq \sum_{k=N^2}^\infty(km_N)^{\alpha}e^{-k} \leq c e^{-c'N^2},
  \end{equation*}
  so that
  \begin{equation}\label{eq:blubblow}
    E_x^{\tau}\big[\ell_{N^2m_N}(x)^{\alpha}\big]
    \geq E_x^{\tau}\big[\ell_{\tx}(x)^{\alpha}\big]
    - ce^{-cN^2}.
  \end{equation}
  Combining \eqref{eq:blubbup} and \eqref{eq:blubblow}, we obtain
  \begin{equation}\label{eq:ltinterval}
    E_x^{\tau}\big[\ell_{N^2m_N}(x)^{\alpha}\big]
    = E_{x}^{\tau}\big[\ell_{\tx}(x)^{\alpha}\big]
    + O\big(2^{(3\varepsilon-1)N}\big).
  \end{equation}
  Note also that by \eqref{e:mN}, for every $\varepsilon>0$ and $N$ large enough,
  \begin{equation}
    \label{e:yub}
    E^{\tau}_x[\ell_{\tx}(x)^{\alpha}]
    \leq E^{\tau}_x[\tx^{\alpha}]\leq cm_N^{\alpha} \leq 2^{\varepsilon N}.
  \end{equation}

  To approximate the probabilities in \eqref{eq:expltxk}, we apply
  \lemref{lem:approxhit} for $t_N=t_{k-1}$ and
  $t_N=t_k-iN^2m_N$, for a fixed $t>0$ and $i=1,2$. Using \lemref{lem:boundrn} to
  bound $R_N$ and \propref{prop:meanhit} to bound $E^{\tau}_{\nu}[H_x]$,
  for every $\varepsilon>0$, $\PP$-a.s.~for $N$ large enough, for both
  $i=1,2$,
  \begin{equation}\label{eq:hitinterval}
    P_{\nu}^{\tau}\big[H_x\in[t_{k-1},t_k-iN^2m_N]\big]
    =	\frac{tR_N}{KE_{\nu}^{\tau}[H_x]}
    + O\big(2^{2(\gamma+\varepsilon-\varepsilon_0-1)N}\big)
    = O\big(2^{(\gamma+\varepsilon-\varepsilon_0-1)N}\big).
  \end{equation}

  Inserting both \eqref{eq:hitinterval} and \eqref{eq:ltinterval} in
  \eqref{eq:expltxk}, and using \eqref{e:yub}, for every $\varepsilon>0$,
  $\PP$-a.s.~for $N$ large enough,
  \begin{equation*}
    E_{\nu}^{\tau}\big[\ell_{t,x}^k\big]
    = \frac{tR_N}{KE_{\nu}^{\tau}[H_x]}
    E_{x}^{\tau}\big[\ell_{\tx}(x)^{\alpha}\big]
    + O\big(2^{(2\gamma+3\varepsilon-2\varepsilon_0-2)N}\big).
  \end{equation*}
  This proves the lemma.
\end{proof}

\begin{proof}[Proof of \lemref{lem:ltxk}]
  Note first that
  \begin{equation*}
    \bigg\{L_N(t)\neq\sum_{k=1}^K U_N^k(t)\bigg\} \subseteq
    \bigg\{\exists x\in\cD_N:~\ell_{tR_N}(x)^{\alpha}\neq
      \sum_{k=1}^K \ell_{t,x}^k\bigg\}.
  \end{equation*}
  To control the probability of this event, we introduce some more notation.
  Set $H_x^{(0)}=0$, $H_x^{(1)}=H_x$, and for $k\ge 2$ define the time of the
	`$k$-th visit after mixing' inductively as
  \begin{equation*}
    H_x^{(k)}=\inf\{t>\tx\circ\theta_{H_x^{(k-1)}}+H_x^{(k-1)}:~Y_t=x\}.
  \end{equation*}
  Let $\cN_t^x=\min\{k\geq0,~H_x^{(k)}\leq t\}$ be the number of `visits after
  mixing' to $x$ before time $t$. Finally, let
  $I_k=[t_k-2N^2m_N, t_k]$. Then
  \begin{equation}
    \label{e:520}
    \begin{split}
      P^{\tau}_{\nu}\bigg[\exists x\in\cD_N:~\ell_{tR_N}(x)^{\alpha}\neq
        \sum_{k=1}^K \ell_{t,x}^k\bigg]
      &\leq P^{\tau}_{\nu}\bigg[Y_s\in \mathcal D_N
        \text{ for some }s\in \bigcup_{k=1}^K I_k\bigg]
      \\
      &\quad+P^{\tau}_{\nu}\big[\exists x\in\cD_N:~\cN_{tR_N}^x\geq2\big]
      \\
      &\quad+P^{\tau}_{\nu}\big[\exists x\in\cD_N:~\tx\circ\theta_{H_x}>
				N^2m_N\big].
    \end{split}
  \end{equation}
  We show that each of the three terms on the right-hand side is smaller than
  $c2^{-\frac{1}{2}\varepsilon_0 N}$, which will prove the lemma.

  For the first term in \eqref{e:520}, using the stationarity of $\nu $
  and the Markov property
  \begin{equation}
    \label{e:iuy}
     P^{\tau}_{\nu}\bigg[Y_s\in \mathcal D_N \text{ for some }
       s\in  \bigcup_{k=1}^K I_k\bigg]
     \le  K \sum_{x\in \mathcal D_N}
     P^{\tau}_{\nu}\big[H_x \le 2 N^2 m_N\big].
  \end{equation}
  By \lemref{lem:approxhit}, $\PP$-a.s.~for all $x\in \cD_N$, for
  $\varepsilon>0$ small and $N$ large enough,
  \begin{equation*}
    P^{\tau}_{\nu}[H_x \leq 2 N^2m_N] \leq 2^{(\varepsilon-1)N}.
  \end{equation*}
  Since
  $|\cD_N| \le c2^{(1-\gamma ')N}$ by \eqref{eq:sizetop},
  the right hand side of \eqref{e:iuy} is bounded by
  $c 2^{\varepsilon_0 N} 2^{(\varepsilon-\gamma')N}$.
  Since $\gamma'>1/2$ and by definition $\varepsilon_0\leq 1/4$, when
  $\varepsilon $ is small enough this is smaller
  than $c 2^{-\frac12 \varepsilon_0 N}$ as required.

  For the second term in \eqref{e:520}, by \lemref{lem:approxhit} and
  the strong Markov property at $\tx$, for every $\varepsilon>0$, $\PP$-a.s.~for
  $N$ large enough,
  \begin{equation*}
    P^{\tau}_{\nu}[H_x^{(2)} \leq tR_N]
    \leq P^{\tau}_{\nu}[H_x\leq tR_N]^2 \leq c2^{2(\gamma-1+\varepsilon)N}.
  \end{equation*}
  Together with \eqref{eq:sizetop} to bound $|\cD_N|$, and using
  \eqref{eq:epsilon0} and \eqref{eq:gamma1}, $\PP$-a.s.~for $N$ large enough,
  \begin{align*}
    P^{\tau}_{\nu}\big[\exists x\in\cD_N:~\cN_{tR_N}^x\geq2\big]
    &\leq c2^{(1-\gamma')N} 2^{2(\gamma-1+ \varepsilon)N}\\
    &= c2^{(\gamma-\gamma')N + (\gamma-1)N + \varepsilon N}\\
    &\leq c 2^{(-\varepsilon_0+\varepsilon) N} \le c2^{-\frac 12
      \varepsilon_0N}
  \end{align*}
  as required.

  Finally we give a bound on the third term in \eqref{e:520}. By
	\propref{prop:mixtime},
  $P^{\tau}_x[\tx> N^2m_N] \leq e^{-cN^2 }$. Thus, with \eqref{eq:sizetop}
  to bound $|\cD_N|$, $\PP$-a.s.~for $N$ large enough,
  \begin{align*}
    P^{\tau}_{\nu}\big[\exists x\in\cD_N:~\tx\circ\theta_{H_x}> N^2m_N\big]
    &\leq c 2^{(1-\gamma')N} P^{\tau}_x[\tx> N^2m_N]\\
    &\leq c'2^{-\frac{1}{2}\varepsilon_0N}.
  \end{align*}
  Together with the previous estimates, this implies that the right-hand side
	of \eqref{e:520} is bounded by $c2^{-\frac{1}{2}\varepsilon_0N}$, and
  concludes the proof of the lemma.
\end{proof}

\begin{proof}[Proof of \lemref{lem:Wtx}]

  Let $\mathcal H_0=0$ and define recursively for $i\ge 1$
  \begin{equation*}
    \mathcal H_i = \inf\{t\ge \mathcal H_{i-1}: Y_t\in \mathcal
      D_N\setminus\{Y_{\mathcal H_{i-1}}\}\}.
  \end{equation*}
  By \eqref{eq:disttop}, $\PP$-a.s.~for $N$ large enough, the vertices in
  $\cD_N$ are at least distance $\delta N$ from each other. In particular
  the balls $A_x = B(x,\rho_x)$, $x\in \mathcal D_N$, constructed in
  \lemref{lem:spheres} are disjoint. Hence, when on $y\in \mathcal D_N$,
  the random walk $Y$ should first leave $A_y$ in order to visit
  $\mathcal D_N\setminus\{y\}$.
  The strong Markov property and
  Corollary~\ref{cor:green} then imply that $\mathcal H_i$ stochastically
  dominates a Gamma random variable with parameters $i$ and
  $\mu :=c2^{N^{1-\delta }}$.

  If $\mathcal W^x_t\ge i$, then $\mathcal H_i\le t$. Hence, for
  $t\ge \mu $,
  \begin{equation*}
    E^\tau_x \big[\mathcal W^x_t\big]
    =\sum_{i\ge 1} P^\tau_x[\mathcal W^x_t\ge i]
    \le\sum_{i\ge 1} P^\tau_x[\mathcal H_i\le t]
    \le \sum_{i\ge 1}
    \int_0^t \mu^i u^{i-1} e^{-\mu u }\Gamma (i)^{-1} du
    = \mu  t.
  \end{equation*}
  It follows that
  \begin{equation*}
    E^\tau_x \big[\mathcal W_\tx^x\big] \le E^\tau_x \big[\mathcal W_{N^2 m_N}^x\big] +
    |\mathcal D_N| P^\tau_x[\tx \ge N^2 m_N]
    \le \mu N^2 m_N  + c 2^{(\gamma '-1)N}e^{-cN^2}  \le 2^{\varepsilon N}
  \end{equation*}
  by \eqref{eq:sizetop}, \eqref{e:mN} and Proposition~\ref{prop:mixtime}.
  This completes the proof.
\end{proof}

For later applications, we state two further consequences of the proof of
\lemref{lem:ltxk}.

\begin{lemma}\label{lem:nolargelocaltimes}
  $\PP$-a.s.~for $N$ large enough,
  \begin{equation*}
    P^{\tau}_{\nu}\big[\exists x\in\cD_N:~\ell_{t R_N}(x)> N^2m_N\big]
    \leq c2^{-\frac{1}{2}\varepsilon_0N},
  \end{equation*}
  and
  \begin{equation*}
    P^{\tau}_{\nu}\Big[\big|\{x\in\cD_N:~H_x\leq tR_N\}\big|
      \geq 2^{\frac{3}{2}\varepsilon_0N}\Big]
    \leq c2^{-\frac{1}{4}\varepsilon_0 N}.
  \end{equation*}
\end{lemma}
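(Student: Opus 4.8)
The plan is to read off both inequalities from the material already assembled in the proof of \lemref{lem:ltxk}, adding only an elementary first-moment argument for the second one.

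For the first inequality I would establish the event inclusion
\[
  \{\exists x\in\cD_N:~\ell_{tR_N}(x) > N^2m_N\}
  \subseteq
  \{\exists x\in\cD_N:~\cN_{tR_N}^x\ge 2\}
  \cup
  \{\exists x\in\cD_N:~\tx\circ\theta_{H_x} > N^2m_N\},
\]
using the times $H_x^{(k)}$ and the counter $\cN_t^x$ introduced in the proof of \lemref{lem:ltxk}. Indeed, if $\ell_{tR_N}(x)>0$ then $H_x\le tR_N$, so $\cN_{tR_N}^x\ge 1$; and if moreover $\cN_{tR_N}^x=1$, i.e.\ $H_x^{(2)}>tR_N$, then by construction of $H_x^{(2)}$ the chain $Y$ does not visit $x$ during $[H_x+\tx\circ\theta_{H_x},\,tR_N]$, so all local time at $x$ before $tR_N$ is collected in the window $[H_x,\,H_x+\tx\circ\theta_{H_x}]$, whence $\ell_{tR_N}(x)\le\tx\circ\theta_{H_x}$. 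Thus a local time exceeding $N^2m_N$ forces one of the two events on the right. But these are exactly the second and third terms on the right-hand side of \eqref{e:520}, each of which was bounded by $c2^{-\frac12\varepsilon_0N}$, $\PP$-a.s.\ for $N$ large, inside the proof of \lemref{lem:ltxk}; a union bound then finishes the first claim.

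For the second inequality I would apply Markov's inequality to the counting variable $W=\sum_{x\in\cD_N}\Ind{H_x\le tR_N}$. By \lemref{lem:boundrn}, $\PP$-a.s.\ for $N$ large one has $1\le tR_N\le 2^N$ (this is where $\gamma<1$ is used), so \lemref{lem:approxhit} with $t_N=tR_N$, together again with \lemref{lem:boundrn}, gives uniformly in $x\in\cD_N$ that $P^{\tau}_{\nu}[H_x\le tR_N]\le c\,tR_N\,2^{(\varepsilon-1)N}\le c\,2^{(\gamma+\varepsilon-1)N}$, $\PP$-a.s.\ for $N$ large. Combining this with $|\cD_N|\le c2^{(1-\gamma')N}$ from \eqref{eq:sizetop} and $\gamma-\gamma'=\varepsilon_0$ from \eqref{eq:gamma1} yields $E^{\tau}_{\nu}[W]\le c2^{(\varepsilon_0+\varepsilon)N}$, and then
\[
  P^{\tau}_{\nu}\big[W\ge 2^{\frac32\varepsilon_0N}\big]
  \le 2^{-\frac32\varepsilon_0N}\,E^{\tau}_{\nu}[W]
  \le c\,2^{(\varepsilon-\frac12\varepsilon_0)N}
  \le c\,2^{-\frac14\varepsilon_0N}
\]
once $\varepsilon$ is taken small enough.

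I do not expect any serious obstacle. The only points that require a little care are the event inclusion in the first part — one must check that a single block of visits to a deep trap between two consecutive mixings cannot by itself accumulate local time larger than $N^2m_N$ unless $\tx\circ\theta_{H_x}$ is already large — and verifying that $tR_N$ lies in the range $[1,2^N]$ demanded by \lemref{lem:approxhit}, which is precisely where the condition $\gamma\in(1/2,1)$ enters via \lemref{lem:boundrn}.
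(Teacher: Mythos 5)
Your proof is correct and follows essentially the same route as the paper's: the first inequality is read off from the bounds on the second and third terms of \eqref{e:520} via the event inclusion (which the paper invokes implicitly with the remark that the local time in a vertex visited only once after mixing is bounded by $\tx\circ\theta_{H_x}$), and the second inequality is exactly the paper's first-moment/Markov argument with the same estimate on $E^\tau_\nu[W]$ from Lemma~\ref{lem:approxhit}, Lemma~\ref{lem:boundrn} and \eqref{eq:sizetop}.
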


\begin{proof}
  The first claim follows directly from the bounds on the second and third
	term on the right hand side of \eqref{e:520} in the proof of
	\lemref{lem:ltxk}, since the local time in a vertex that is only `visited
	once after mixing' is bounded by $\tx\circ\theta_{H_x}$.

  The second assertion can be seen in the following way.
  Using \lemref{lem:approxhit} to bound the probability of a single vertex
  $x\in \cD_N $ to be visited before time $tR_N$ and \eqref{eq:sizetop} to
  bound the size of $\cD_N$, for every $\varepsilon>0$, $\PP$-a.s.~for $N$ large
  enough,
  \begin{equation*}
    E^{\tau}_{\nu}\big[|\{x\in\cD_N:~H_x\leq tR_N\}|\big]
    \leq c2^{(1-\gamma')N}2^{(\gamma-1+\varepsilon)N} \leq
    c2^{(\gamma-\gamma'+\varepsilon)N}.
  \end{equation*}
  By \eqref{eq:gamma1} this is equal to
  $c2^{(\varepsilon_0+\varepsilon)N}$, so choosing $\varepsilon<\varepsilon_0/4$
  this is smaller than $c2^{\frac{5}{4}\varepsilon_0 N}$. Then by the Markov
  inequality the probability that there are more than
  $2^{\frac{3}{2}\varepsilon_0 N}$ vertices visited is smaller than
  $c2^{-\frac{1}{4}\varepsilon_0 N}$.
\end{proof}

%------------------------------------------------------------------------------
%------------------------------------------------------------------------------
%------------------------------------------------------------------------------
%------------------------------------------------------------------------------

\section{Clock process of the deep traps} %<<<1
\label{sec:deep}

This section contains the main steps leading to the proof of \thmref{thm:main}.
Recall from \eqref{e:SD} that the `clock process of deep traps'
$S_\mathcal D$ is given by
\begin{equation*}
  S_{\cD}(t) = \int_0^{t} (1\vee\tau_{Y_s})\Ind{Y_s\in \cD_N}ds =
  \int_0^{t} \tau_{Y_s}\Ind{Y_s\in \cD_N}ds.
\end{equation*}
We now show that $S_\mathcal D$ converges to a stable process.

\begin{proposition} \label{prop:deep}
Under the assumptions of \thmref{thm:main}, the rescaled clock processes of
the deep traps $g_N^{-1}S_{\cD}(tR_N)$ converge in $\PP$-probability as
$N\to\infty$, in $P^{\tau}_{\nu}$ distribution on the space $D([0,T],\RR)$
equipped	with the Skorohod $M_1$-topology, to an $\alpha$-stable
subordinator $V_{\alpha}$.
\end{proposition}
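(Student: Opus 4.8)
The plan is to establish \propref{prop:deep} in three stages: first, to compute the finite-dimensional \emph{quasi-annealed} Laplace transforms of the increments of $g_N^{-1}S_{\cD}(\cdot\,R_N)$ conditionally on the trajectory of $Y$, exploiting the fact that on the separation event $\mathscr S$ of \lemref{lem:sparsetop} the law of $Y$ does not depend on the depths of the deep traps; second, to identify the limit using the concentration of the local time functional from \secref{sec:localtimes}; third, to upgrade the conclusion to $M_1$-convergence and from the quasi-annealed to the quenched (in $\PP$-probability) mode. To set up the conditioning, note that on $\mathscr S$ every $x\in\cD_N$ has only non-deep neighbours, so $q_{xy}=\tau_y\Ind{x\sim y}$ for $x\in\cD_N$, and both $Z_N$ and the law of the trajectory of $Y$ are measurable with respect to the $\sigma$-algebra $\cG$ generated by the positions of the deep traps and by $(E_x)_{x\notin\cD_N}$, while $(\tau_x)_{x\in\cD_N}$ are i.i.d.\ given $\cG$ and independent of the trajectory. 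Fixing a grid $0=t_0<t_1<\dots<t_n\le T$, reals $\lambda_1,\dots,\lambda_n\ge 0$, and writing $\Delta_j(x)=\ell_{t_jR_N}(x)-\ell_{t_{j-1}R_N}(x)$, $a_x=\sum_j\lambda_j\Delta_j(x)\ge 0$, one has $S_{\cD}(t_jR_N)-S_{\cD}(t_{j-1}R_N)=\sum_{x\in\cD_N}\Delta_j(x)\tau_x$ and hence, by conditional independence,
\begin{equation*}
  \EE_{\cD}\Big[\exp\Big\{-\frac1{g_N}\sum_{j=1}^n\lambda_j\big(S_{\cD}(t_jR_N)-S_{\cD}(t_{j-1}R_N)\big)\Big\}\,\Big|\,Y\Big]
  =\prod_{x\in\cD_N}\EE_{\cD}\big[e^{-a_x\tau_x/g_N}\big].
\end{equation*}

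The core estimate is that, $\PP$-a.s.\ for $N$ large, uniformly over $x\in\cD_N$ and over $a\in[0,\operatorname{poly}(N)]$,
\begin{equation*}
  \EE_{\cD}\big[e^{-a\tau_x/g_N}\big]=1-\cK\,a^{\alpha}\,2^{-(\gamma-\gamma')N}\,(1+o(1)),\qquad \cK=\Gamma(1-\alpha),
\end{equation*}
which follows from the Gaussian tail \eqref{eq:gaussapprox}: by \eqref{e:tautail} and \eqref{eq:density}, the conditional law of $\tau_x/g_N$ given $x\in\cD_N$ has tail $u^{-\alpha}2^{-(\gamma-\gamma')N}(1+o(1))$ at scales $u$ of order one, whereas its bulk sits at the scale $g'_N/g_N=2^{-(\gamma-\gamma')N/\alpha}$ and contributes only $O(a\,2^{-(\gamma-\gamma')N/\alpha})$ to $\EE_{\cD}[1-e^{-a\tau_x/g_N}]$, which is negligible against the main term because $a$ is only polynomial in $N$. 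On the $P^{\tau}_{\nu}$-event of probability $1-o(1)$ where $\ell_{TR_N}(x)\le N^2m_N$ for all $x\in\cD_N$ (\lemref{lem:nolargelocaltimes}) we have $a_x\le (\sum_j\lambda_j)N^2m_N$, and by \propref{prop:conclocal} applied at $t=T$ also $\sum_{x\in\cD_N}a_x^{\alpha}\le C\,2^{(\gamma-\gamma')N}$ on a further $P^{\tau}_{\nu}$-event of probability $1-o(1)$; taking logarithms and summing the single-factor estimate over $x\in\cD_N$ (the error from $\log(1-z)=-z+O(z^2)$ being then $o(1)$) therefore yields
\begin{equation*}
  \EE_{\cD}\Big[\exp\Big\{-\frac1{g_N}\sum_{j=1}^n\lambda_j\big(S_{\cD}(t_jR_N)-S_{\cD}(t_{j-1}R_N)\big)\Big\}\,\Big|\,Y\Big]
  =\exp\Big\{-\cK\,2^{(\gamma'-\gamma)N}\sum_{x\in\cD_N}a_x^{\alpha}\Big\}\,(1+o(1)).
\end{equation*}

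To identify the right-hand side, recall from the proof of \lemref{lem:ltxk} (and \lemref{lem:nolargelocaltimes}) that $\PP$-a.s.\ with $P^{\tau}_{\nu}$-probability $1-o(1)$ every $x\in\cD_N$ contributes to the increment over only one of the intervals $(t_{j-1},t_j]$, so that $a_x^{\alpha}=\sum_j\lambda_j^{\alpha}\Delta_j(x)^{\alpha}$ and, up to boundary corrections from straddling excursions, $2^{(\gamma'-\gamma)N}\sum_{x\in\cD_N}a_x^{\alpha}=\sum_j\lambda_j^{\alpha}\big(L_N(t_j)-L_N(t_{j-1})\big)+o(1)$. By \propref{prop:conclocal}, $L_N(t_j)\to t_j$ in $P^{\tau}_{\nu}$-probability for $\PP$-a.e.\ $\tau$, hence the conditional Laplace transform above converges in $P^{\tau}_{\nu}$-probability to $\exp\{-\cK\sum_j\lambda_j^{\alpha}(t_j-t_{j-1})\}$; since it is bounded by $1$, dominated convergence and Fubini (the trajectory law being $\cG$-measurable) give, for $\PP$-a.e.\ $\tau$, convergence of the quasi-annealed finite-dimensional Laplace transforms to $\prod_j e^{-\cK\lambda_j^{\alpha}(t_j-t_{j-1})}$, which are those of the increments of an $\alpha$-stable subordinator $V_{\alpha}$. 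As $g_N^{-1}S_{\cD}(\cdot R_N)$ is non-decreasing and continuous and $V_{\alpha}$ has no fixed discontinuities, this promotes to quasi-annealed convergence in distribution on $(D([0,T],\RR),M_1)$ by the standard criterion for $M_1$-convergence of monotone processes ($J_1$-convergence being excluded by the continuity of the prelimit). Finally, to pass from the $\EE_{\cD}$-average to convergence in $\PP$-probability, fix a bounded $M_1$-continuous functional $F$ from a countable convergence-determining class; by the above $\EE_{\cD}[E^{\tau}_{\nu}[F]\mid\cG]\to\bar F:=\EE[F(V_{\alpha})]$, so it suffices to show $\EE_{\cD}[(E^{\tau}_{\nu}[F])^2\mid\cG]\to\bar F^2$. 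Running the computation for two independent copies $Y^{(1)},Y^{(2)}$ of $Y$ under $P^{\tau}_{\nu}$ (with the \emph{same} deep-trap energies), the conditional Laplace transform of the pair factorizes as soon as $Y^{(1)}$ and $Y^{(2)}$ visit disjoint subsets of $\cD_N$; by the first-moment bound $P^{\tau}_{\nu}[H_x\le TR_N]\le c\,2^{(\gamma-1+\varepsilon)N}$ (from \lemref{lem:approxhit} and \lemref{lem:boundrn}) and \eqref{eq:sizetop} they fail to do so with probability at most $c\,2^{(1-\gamma')N}2^{2(\gamma-1+\varepsilon)N}=c\,2^{(\gamma-1+\varepsilon_0+2\varepsilon)N}\le c\,2^{-\frac12\varepsilon_0N}$, which is negligible; on the complement each factor converges as before, so $\EE_{\cD}[(E^{\tau}_{\nu}[F])^2\mid\cG]\to\bar F^2$, and Chebyshev's inequality together with ranging over the determining class completes the proof.

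I expect the main obstacle to be the single-factor expansion $\EE_{\cD}[e^{-a\tau_x/g_N}]=1-\cK a^{\alpha}2^{-(\gamma-\gamma')N}(1+o(1))$: it must hold \emph{uniformly} in $x\in\cD_N$ and over the whole polynomially-bounded range of $a$, and all the error terms---coming from the fact that \eqref{eq:gaussapprox} is only asymptotically Pareto, from the conditioning on $\{x\in\cD_N\}$ which truncates the bulk of $\tau_x$ at $g'_N$, from the passage $\log(1-z)\to-z$, and from the low-probability events of \lemref{lem:nolargelocaltimes} where some local time is atypically large or too many deep traps are visited---must be shown to vanish after summing over the $\asymp 2^{(1-\gamma')N}$ deep traps. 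Keeping track of the interplay of the exponential scales $g_N$, $g'_N$, $R_N$, $m_N$ and $2^{\varepsilon_0N}$ at this point is exactly where the explicit choice $\gamma'=\gamma-\varepsilon_0$ of \secref{sec:localtimes} and the concentration \propref{prop:conclocal} become indispensable; everything else is fairly routine once this estimate and the single-visit structure of the deep traps are available.
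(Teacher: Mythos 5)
Your proposal is correct in substance but organizes the argument along a genuinely different route from the paper's. The paper proceeds in four stages: quasi-annealed convergence of the one-dimensional Laplace transform (Lemma~\ref{lem:annealed}), quenched (in $\PP$-probability) convergence of the one-dimensional Laplace transform by a second-moment computation with two independent copies (Lemma~\ref{lem:onedim}), \emph{separate} decoupling of increments over disjoint time-windows by inserting mixing blocks of length $N^2 m_N$ (Lemma~\ref{lem:increments}), and explicit verification of the Whitt $M_1$-tightness criterion (Lemma~\ref{lem:tight}). You instead compute the quasi-annealed \emph{multi-time-point} Laplace transform in one stroke, writing $\sum_j\lambda_j(S_\cD(t_jR_N)-S_\cD(t_{j-1}R_N))=\sum_{x\in\cD_N}a_x\tau_x$, factoring the $\overE$-expectation over the i.i.d.\ deep-trap energies, and identifying $2^{(\gamma'-\gamma)N}\sum_x a_x^\alpha$ with $\sum_j\lambda_j^\alpha(L_N(t_j)-L_N(t_{j-1}))$ via the single-excursion structure of deep traps (each $x\in\cD_N$ is visited once and within a window of length $\le N^2m_N$, so with $P^\tau_\nu$-probability $1-o(1)$ it contributes to exactly one increment). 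This removes the need for a separate increment-decoupling lemma, which is a real simplification; the price is that you must make the straddling-boundary corrections explicit (they follow from the first-term bound of \eqref{e:520} applied to the $n$ fixed boundary windows rather than to $2^{\varepsilon_0 N}$ of them, so they are even smaller), and that your second-moment step is run at the level of general bounded $M_1$-continuous $F$, which implicitly requires joint $M_1\times M_1$ convergence of the quasi-annealed law of two independent copies; this does follow from the factorized joint fdd convergence on the disjoint-visit event plus marginal tightness, but you should say so. Two smaller checks: your constant $\cK=\Gamma(1-\alpha)$ is the simplification of the paper's $\cK=\alpha\beta\int_{\RR}e^{-\alpha\beta z}(1-e^{-e^{\beta z}})\,dz$ under the substitution $w=e^{\beta z}$, and your closing remark that the main obstacle is the uniform single-factor expansion over $a\in[0,\operatorname{poly}(N)]$ is precisely the content of \eqref{eq:squarexp}--\eqref{eq:convint} and Remark~\ref{rem:convmode}: it is exactly this $\log^2 u$ error term that forces $m_N$ to be polynomial and hence requires the improved spectral-gap bound of \propref{prop:spectralgap}.
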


The proof of \propref{prop:deep} consists of three steps. In a first step,
we show convergence in distribution of one-dimensional marginals by
showing that the Laplace transform of one-dimensional marginals
converges. This step contains, to some extent, the principal insight of this
paper and is split in two parts: We first show
the quasi-annealed convergence mentioned in the introduction, which is
then strengthened to convergence in probability with respect to the
environment. The second and third step of
the proof of Proposition~\ref{prop:deep} are rather standard  and
deal with the joint convergence of increments and the tightness.

%------------------------------------------------------------------------------

\subsection{Quasi-annealed convergence}

We establish here the connection between the Lap\-lace transform of the
clock process of deep traps and the local time functional $L_N$ studied in
Section~\ref{sec:localtimes}. The key observation is that the depths of
the deep traps are in some sense independent of the fast chain $Y$,
and can be thus averaged out easily.

To formalize this, we introduce a two-step procedure to sample the environment
$\tau$. Let $\xi=(\xi_x)_{x\in\HH_N}$ be i.i.d.~Bernoulli random variables
such that, cf.~\eqref{eq:density},
\begin{equation*}
  \PP[\xi_x=1]=1-\PP[\xi_k=0]=\PP[x\in\cD_N]= 2^{-\gamma 'N}(1+o(1)).
  \end{equation*}
Further, let
$\overline{E}=(\overline{E}_x)_{x\in\HH_N}$ be i.i.d.~standard Gaussian
random variables
conditioned to be larger than $\frac{1}{\beta\sqrt{N}}g'_N$, and
$\underline{E}=(\underline{E}_x)_{x\in\HH_N}$ i.i.d.~standard Gaussian
random variables
conditioned to be smaller than $\frac{1}{\beta\sqrt{N}}g'_N$. The collections
$\xi$, $\overline{E}$ and $\underline{E}$ are mutually independent.
The Hamiltonian of the REM can be obtained
by setting
\begin{equation}
  \label{e:Experc}
  E_x=\overline{E}_x \Ind{\xi_x=1} + \underline{E}_x \Ind{\xi_x=0}.
\end{equation}
From now on, we always assume that $E_x$ are given by \eqref{e:Experc}.
Observe that in this procedure the set $\mathcal D_N$ coincides with the set
$\{x\in \mathbb H_N:\xi_x=1\}$.

We use $\cG=\sigma(\xi,\underline{E})$ to denote the $\sigma $-algebra generated by
the $\xi $'s and $\underline E$'s. In particular, the
number and positions of deep traps and all the $\tau_y$, $y\notin\cD_N$, are
$\cG$-measurable. The depths of deep traps are however
independent of $\cG$.

In the next lemma we compute the quasi-annealed Laplace transform of
$S_{\mathcal D}$. The term `quasi-annealed' refers to the fact that we
average over the energies of the deep traps $\overline{E}_x$ (and over the
  law of the process), but we keep quenched the positions of the deep
traps $\xi_x$ and the energies of remaining traps $\underline{E}_x$.

\begin{lemma}
  \label{lem:annealed}
  There is a constant $\mathcal K\in (0,\infty)$ such that
  for every $\lambda >0$ and $t\ge 0$,
  \begin{equation*}
    \EE\Big[E^{\tau}_{\nu}\big[
        e^{-\frac{\lambda}{g_N}S_{\cD}(tR_N)}\big] \Big|\,\cG \Big]
    \xrightarrow{N\to\infty} e^{-\cK\lambda^{\alpha} t}, \qquad \mathbb P\text{-a.s.}
  \end{equation*}
\end{lemma}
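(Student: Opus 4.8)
The plan is to exploit the two‑step sampling \eqref{e:Experc} of the environment together with the separation of deep traps, reduce the conditional Laplace transform to a product over deep traps, compute the Laplace transform of a single (conditioned) Gibbs weight, and finally recognise the local‑time functional $L_N$ of \secref{sec:localtimes} and invoke its concentration.

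\emph{Step 1: conditional factorisation.} Write $S_{\cD}(tR_N)=\sum_{x\in\cD_N}\ell_{tR_N}(x)\tau_x$, and recall that in the sampling \eqref{e:Experc} one has $\cD_N=\{x\in\HH_N:\xi_x=1\}$ and $\cG=\sigma(\xi,\underline E)$. I would first check that the law $P^{\tau}_{\nu}$ of $Y$ is $\cG$‑measurable. Indeed, on the separation event $\mathscr S$ of \lemref{lem:sparsetop}, which holds $\PP$‑a.s.\ for $N$ large, no two deep traps are neighbours, so for $x\in\cD_N$ and $y\sim x$ we have $\tau_y<g'_N\le\tau_x$, hence in \eqref{def:Y} $\tau_x\wedge\tau_y=\tau_y$ and $1\wedge\tau_x=1$; thus $q_{xy}=\tau_y$ and $q_{yx}=\tau_y/(1\wedge\tau_y)$ are $\cG$‑measurable, as are all other rates (which involve only non‑deep weights), as well as $Z_N$ and $\nu$. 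Since moreover the family $(\tau_x)_{x\in\cD_N}$ is i.i.d.\ and independent of $\cG$ (and of the driving randomness of $Y$), conditionally on $\cG$ the trajectory of $Y$ and the depths $(\tau_x)_{x\in\cD_N}$ are independent. Writing $\psi_N(\mu)=\EE\big[e^{-\mu\tau_x}\bigm| x\in\cD_N\big]$ for the common Laplace transform of a deep‑trap weight, this gives
\begin{equation*}
  \EE\Big[E^{\tau}_{\nu}\big[e^{-\frac{\lambda}{g_N}S_{\cD}(tR_N)}\big]\,\Big|\,\cG\Big]
  = E^{\tau}_{\nu}\Big[\prod_{x\in\cD_N}\psi_N\big(\tfrac{\lambda}{g_N}\ell_{tR_N}(x)\big)\Big].
\end{equation*}

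\emph{Step 2: Laplace transform of one weight.} From $1-\psi_N(\mu)=\int_0^\infty\mu e^{-\mu s}\,\PP[\tau_x>s\mid x\in\cD_N]\,ds$, the Gaussian tail \eqref{eq:gaussapprox} (which upgrades to $\PP[\tau_x>ug_N]=u^{-\alpha}2^{-\gamma N}(1+o(1))$ uniformly over $u$ with $|\log u|=o(\sqrt N)$, and to a uniform bound $\PP[\tau_x>s]\le c\,s^{-\alpha}g_N^\alpha 2^{-\gamma N}$ for all $s$), and $\PP[\tau_x\ge g'_N]=2^{-\gamma'N}(1+o(1))$ from \eqref{eq:density}, I would derive
\begin{equation*}
  1-\psi_N(\mu)=\cK\,(\mu g_N)^\alpha\,2^{(\gamma'-\gamma)N}\,(1+o(1)),\qquad \cK=\Gamma(1-\alpha)=\int_0^\infty v^{-\alpha}e^{-v}\,dv,
\end{equation*}
uniformly over $\mu$ with $\mu g_N$ in a suitable range, the constant $\cK$ emerging after the substitution $v=\mu s$ from the combination of the cut‑off $e^{-v}$ with the power‑law tail; outside that range the crude upper bound $1-\psi_N(\mu)\le c\,(\mu g_N)^\alpha 2^{(\gamma'-\gamma)N}$, valid whenever $\mu g_N$ is at most polynomial in $N$, suffices.

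\emph{Step 3: assembling and concentration.} On the event of \lemref{lem:nolargelocaltimes} the arguments $\tfrac{\lambda}{g_N}\ell_{tR_N}(x)\cdot g_N=\lambda\ell_{tR_N}(x)$ are at most $\lambda N^2m_N$, so each $1-\psi_N(\tfrac{\lambda}{g_N}\ell_{tR_N}(x))$ is small; expanding $\log\psi_N=-(1-\psi_N)+O\big((1-\psi_N)^2\big)$ and summing, Step 2 (for deep traps with local time of typical order) and the crude bound together with the bound on the number of visited traps in \lemref{lem:nolargelocaltimes} (for the remaining ones) yield
\begin{equation*}
  \sum_{x\in\cD_N}\big(1-\psi_N(\tfrac{\lambda}{g_N}\ell_{tR_N}(x))\big)
  = \cK\,\lambda^\alpha\,2^{(\gamma'-\gamma)N}\sum_{x\in\cD_N}\ell_{tR_N}(x)^\alpha\,(1+o(1))
  = \cK\,\lambda^\alpha\,L_N(t)\,(1+o(1)),
\end{equation*}
while the sum of squares is negligible by the same inputs. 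By \propref{prop:conclocal}, $L_N(t)\to t$ in $P^{\tau}_{\nu}$‑probability for $\PP$‑a.e.\ environment, so $\prod_{x\in\cD_N}\psi_N(\tfrac{\lambda}{g_N}\ell_{tR_N}(x))\to e^{-\cK\lambda^\alpha t}$ in $P^{\tau}_{\nu}$‑probability; since this product lies in $[0,1]$, bounded convergence gives $E^{\tau}_{\nu}\big[\prod_{x}\psi_N(\cdots)\big]\to e^{-\cK\lambda^\alpha t}$ for $\PP$‑a.e.\ environment, which is the assertion.

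The step I expect to be the main obstacle is making the asymptotics of $\psi_N$ in Step 2 uniform over exactly the range of local times that occurs with non‑negligible probability, and controlling in Step 3 the contribution to $\sum_{x}\big(1-\psi_N(\cdots)\big)$ of deep traps visited for an atypically short time; this is where the quantitative estimates of \secref{sec:localtimes}, in particular \lemref{lem:nolargelocaltimes} and the bounds underlying \propref{prop:conclocal}, are essential.
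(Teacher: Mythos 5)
Your proposal is correct and follows essentially the same line of reasoning as the paper's proof: factorise the conditional Laplace transform over deep traps using the $\cG$-measurability of the law of $Y$ on the separation event $\mathscr S$ (the paper calls this exchanging $\EE[\,\cdot\mid\cG]$ and $E^\tau_\nu$), compute the Laplace transform $\psi_N$ of a single conditioned Gibbs weight, recognise the local-time functional $L_N$, and invoke \propref{prop:conclocal} together with \lemref{lem:nolargelocaltimes} and bounded convergence.

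The one genuine, if minor, difference is in Step~2. You derive the asymptotics of $1-\psi_N(\mu)$ from the tail-integral representation $1-\psi_N(\mu)=\int_0^\infty\mu e^{-\mu s}\PP[\tau_x>s\mid x\in\cD_N]\,ds$, the Gaussian tail asymptotics \eqref{eq:gaussapprox}, and a uniform tail bound, arriving at $\cK=\Gamma(1-\alpha)$. The paper instead writes $\vartheta(u)=1-\overE\big[e^{-\lambda u\, e^{\beta\sqrt N\overline E_x}/g_N}\big]$ directly as a Gaussian integral, changes variables, and obtains $\cK=\alpha\beta\int_{\RR}e^{-\alpha\beta z}(1-e^{-e^{\beta z}})\,dz$; a substitution $v=e^{\beta z}$ and integration by parts show this equals $\Gamma(1-\alpha)$, so the two constants agree. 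Your route is the classical ``tail $\Rightarrow$ stable Laplace exponent'' computation and makes the origin of $\Gamma(1-\alpha)$ transparent, whereas the paper's route is self-contained and aligns the error analysis with the explicit dependence on $u$ (the $\log^2 u/N$ term in \eqref{eq:squarexp}); this matters in \remref{rem:convmode}. The concern you flag at the end — uniformity of the one-trap asymptotics over atypically small local times, controlled via the crude bound $1-\psi_N(\mu)\le c(\mu g_N)^\alpha 2^{(\gamma'-\gamma)N}$ and the count of visited deep traps — is exactly the delicate point, and your treatment of it is no weaker than the paper's, which works on the event $\cA$ of \eqref{eq:eventA} and relies on the upper bound $\ell_{tR_N}(x)\le N^2m_N$.
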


\begin{proof}
  Recall the separation event $\mathscr{S}$ defined in \eqref{eq:disttop}.
  This event depends only on $\xi$ and is therefore $\cG$-measurable, and
  by \lemref{lem:sparsetop} it occurs $\PP$-a.s.~for $N$ large enough.
  On $\mathscr{S}$, no deep traps $x\in\cD_N$ are neighbors. Since
  moreover $\tau_x\geq1$ for $x\in\cD_N$, all the transition rates
  \begin{equation*}
    q_{xy} \mathbf{1}_{\mathscr{S}}
    = \frac{\tau_x\wedge\tau_y}{1\wedge\tau_x}\mathbf{1}_{\mathscr{S}}
    ,\qquad x,y\in \mathbb H_N,
  \end{equation*}
  are $\cG$-measurable. That is, on the event $\mathscr{S}$, the law of
  the chain $Y$ is
  in fact $\cG$-measurable. Therefore, on $\mathscr{S}$, the order of taking
  expectations over the depth of the deep traps and the chain $Y$ can be
  exchanged. Namely, denoting by $\overE$ the expectation over the random
  variables $\overline{E}_x$, on $\mathscr{S}$,
  \begin{equation}	\label{eq:exchange}
    \begin{split}
      \EE\left[E^{\tau}_{\nu}\big[
          e^{-\frac{\lambda}{g_N}S_{\cD}(tR_N)}\big]\, \Big |\, \cG  \right]
      &=E^{\tau}_{\nu}\left[\overE\left[
          e^{-\frac{\lambda}{g_N}S_{\cD}(tR_N)}\right] \right]
      \\&=E^{\tau}_{\nu}\left[\overE\Big[
          \exp\Big\{-\frac{\lambda}{g_N}
            \int_0^{tR_N}\tau_{Y_s}\Ind{Y_s\in \mathcal D_N}ds\Big\}\Big] \right]
      \\&=E^{\tau}_{\nu}\left[\overE\Big[
          \exp\Big\{-\frac{\lambda}{g_N}
            \sum_{x\in \mathcal D_N}\ell_{tR_N}(x)\tau_x\Big\}\Big] \right].
    \end{split}
  \end{equation}

  We next approximate the inner expectation on the right-hand side of
  \eqref{eq:exchange}. Since its argument is bounded by one, it will be
  sufficient to control it on an event of $P^{\tau}_{\nu}$-probability
  tending to $1$ as $N\to\infty$. Define the event
  \begin{equation}\label{eq:eventA}
    \cA = \big\{\text{for all }x\in\cD_N,~\ell_{tR_N}(x) \leq N^2 m_N\big\}\\
    \cap \Big\{\big|L_N(t) - t\big| \leq 2^{-\frac{1}{5}\varepsilon_0N}\Big\}.
  \end{equation}
	By \propref{prop:conclocal} and \lemref{lem:nolargelocaltimes},
  $\PP$-a.s.~for $N$ large enough, $P^{\tau}_{\nu}[\cA^c]\leq e^{-cN}$.

  When performing the inner expectation of \eqref{eq:exchange}, the local
  times $\ell_{tR_N}(x)$ of $Y$ as well as $\cD_N$ are fixed, the
  expectation is taken only over the energies of the deep traps. By
  independence of the $\overline{E}_x$ it follows that
  \begin{equation}\label{eq:lapcond}\begin{split}
      \overE\left[
        e^{-\frac{\lambda}{g_N}S_{\cD}(tR_N)}\right]
      &= \prod_{x\in \cD_N}
      \overE\left[\exp\left\{- \frac{\lambda}{g_N}\ell_{tR_N}(x)
          e^{\beta\sqrt N \,\overline{E}_x}\right\}\right]\\
      &= \exp\left\{\sum_{x\in \cD_N} \log \overE\left[
          \exp\left\{- \frac{\lambda }{g_N}\ell_{tR_N}(x)
            e^{\beta\sqrt N \,\overline{E}_x} \right\}\right]\right\}.
  \end{split}\end{equation}

  For $u\in [0,N^2 m_N]$, let
  \begin{equation*}
    \vartheta(u) = 1- \overE\left[\exp\left\{-\frac{\lambda}{g_N}
        u e^{\beta\sqrt N \,\overline{E}_x}\right\}  \right].
  \end{equation*}
  Since $(\overline{E}_x)$ has standard Gaussian distribution conditioned
  on being larger than $\frac{1}{\beta\sqrt{N}}\log g'_N$, using that by
  \eqref{eq:density},
  \begin{equation*}
    \PP\Big[E_x > \frac{1}{\beta\sqrt{N}}\log g'_N\Big] = \PP[x\in\cD_N]
    = 2^{-\gamma' N}(1+o(1)),
  \end{equation*}
  it follows that
  \begin{equation*}
    \vartheta(u) = \frac{2^{\gamma' N}}{\sqrt{2\pi}}\, (1+o(1))
    \int_{\frac{1}{\beta\sqrt{N}}\log g'_N}^{\infty} e^{-\frac{s^2}{2}}
    \left(1-e^{-\frac{\lambda u}{g_N}e^{\beta\sqrt N s}} \right) ds.
  \end{equation*}
  We use the substitution $s=\frac{1}{\beta\sqrt{N}} ( \beta z + \log g_N - \log
    \lambda-\log u)$. The lower limit of the integral then becomes
  \begin{equation*}
    \frac{1}{\beta}(\log g'_N-\log g_N+\log\lambda+\log u)
    =: \omega(N).
  \end{equation*}
  For $u\le N^2 m_N$, $\omega_N$ is asymptotically dominated by
  $\log g'_N-\log g_N \leq -cN$, and thus $\lim_{N\to\infty}\omega(N)=-\infty$.
  After the substitution,
  \begin{equation}
    \label{e:thetaa}
    \vartheta(u)=\frac{2^{\gamma' N}}{\sqrt{2\pi}}  (1+o(1))
    \int_{\omega(N)}^{\infty}e^{-\frac{1}{2\beta^2 N}
      (\beta z + \log g_N - \log \lambda- \log u)^2}
    \left(1-e^{-e^{\beta z}} \right) \frac{1}{\sqrt{N}}\, dz.
  \end{equation}

  For $u\in [0,N^2 m_N]$, using the definition \eqref{def:gn} of $g_N$,
  the exponent of the first exponential satisfies
  \begin{equation}\label{eq:squarexp}\begin{split}
    -\frac{1}{2\beta^2 N}&(\beta z+\log g_N-\log\lambda-\log u)^2 \\
    &= -\frac{1}{2\beta^2 N}(\beta z + \alpha\beta^2 N -\frac{1}{\alpha}
      \log(\alpha\beta\sqrt{2\pi N}) - \log \lambda - \log u )^2 \\
    &= -\frac{\alpha^2\beta^2}{2}N + \alpha\log\lambda + \alpha \log u
    +\log(\alpha\beta\sqrt{2\pi N})
    - \alpha\beta z + \err(z)+ o(1).
  \end{split}\end{equation}
  Here, $o(1)$ is an error independent of the variable $z$. Note that for the
	$\log^2 u$ part to be $o(1)$ it is important that $m_N$ defined in \eqref{e:mN}
	is not too large, see also \remref{rem:convmode}. The second error term is
  \begin{equation*}
    \err(z)=-\frac{1}{2N} z^2 + \frac{1}{\beta N} z \bigg(
      \frac{1}{\alpha}\log(\alpha\beta\sqrt{2\pi N}) + \log \lambda
      + \log u\bigg).
  \end{equation*}
  Observe that  $\lim_{N\to\infty} \err(z)=0$ for every $z\in \mathbb R$,
  and that for every $\varepsilon $ there is $N_0$ large enough, so that
  for $N\ge N_0$ and all $z\in \mathbb R$
  \begin{equation}
    \label{e:errbound}
   \err(z) \leq \varepsilon |z|.
  \end{equation}

  Inserting the results of the computation \eqref{eq:squarexp} back into
  \eqref{e:thetaa}, using that ${\alpha^2\beta^2}/{2}=\gamma\log2$, we
  obtain
  \begin{equation}
    \label{eq:thetamid}
    \vartheta(u) = \alpha\beta  2^{(\gamma'-\gamma) N}
    \lambda^{\alpha}u^{\alpha}
    \int_{\omega(N)}^{\infty} e^{-\alpha\beta z + \err(z)}
    \left(1-e^{-e^{\beta z}} \right) dz\, (1+o(1)).
  \end{equation}

  We now claim that
  \begin{equation}
    \label{eq:convint}
    \int_{\omega(N)}^{\infty} e^{-\alpha\beta z + \err(z)}
    \left(1-e^{-e^{\beta z}}\right)dz
    \xrightarrow{N\to\infty}
    \int_{\mathbb R} e^{-\alpha\beta z }
    \left(1-e^{-e^{\beta z}}\right)dz=:C.
  \end{equation}
  Indeed, the
  integrand converges point-wise on $\RR$ to
  $e^{-\alpha\beta z}(1-e^{-e^{\beta z}})$ which is integrable if
  $\alpha<1$. Moreover, by \eqref{e:errbound},
  the integrand is bounded by $e^{-\alpha\beta z+\varepsilon |z|}(1-e^{-e^{\beta z}})$,
  which is integrable
  if we choose $\varepsilon<\beta(1-\alpha)\wedge\alpha\beta$. The claim
  \eqref{eq:convint} follows by the dominated convergence theorem.

  We now come back to \eqref{eq:lapcond}. Since on $\mathcal A$,
  $\ell_{tR_N}(x)\le N^2 m_N$ for all $x\in \mathcal D_N$, and
  $\gamma '<\gamma $, we see that $\vartheta(\ell_{tR_N}(x)) = o(1)$
  uniformly in $x\in \mathcal D_N$ on $\cA$.
  With $\log(1-x)= -x(1+O(x))$ as $x\to0$ this yields
  \begin{equation*}
    \begin{split}
      \overE\left[
        e^{-\frac{\lambda}{g_N}S_{\cD}(tR_N)}\right]
      &= \exp\Big\{\sum_{x\in \mathcal
          D_N}\log \big(1-\vartheta(\ell_{tR_N}(x))\big)\Big\}
      \\&= \exp\Big\{-\sum_{x\in \mathcal
          D_N}\vartheta(\ell_{tR_N}(x))(1+o(1))\Big\}.
    \end{split}
  \end{equation*}
  The inner sum can be easily computed from \eqref{eq:thetamid}. Recalling
  that on $\mathcal A$ the
  local time functional $L_N(t)$ converges,
  denoting $\cK=\alpha\beta C$, we obtain on $\cA$,
  \begin{equation}
    \label{eq:theta}
    \begin{split}
      \sum_{x\in \cD_N} \vartheta(\ell_{tR_N}(x)) &= \alpha\beta C \lambda^{\alpha}
      2^{(\gamma'-\gamma) N}\sum_{x\in \cD_N} \ell_{tR_N}(x)^{\alpha}
      (1+o(1)) \\
      &=\alpha\beta C \lambda^{\alpha} L_N(t) (1+o(1))\\
      &= \cK \lambda^{\alpha} t +o(1) \quad \text{as }N\to\infty.
    \end{split}
  \end{equation}
  It follows that on $\mathcal A$
  \begin{equation*}
    \overE\Big[e^{-\frac{\lambda}{g_N}S_{\cD}(tR_N)}\Big]
    = e^{-\cK t \lambda^{\alpha}(1+o(1))}
    = e^{-\cK t \lambda^{\alpha}} + o(1)\quad \text{as }N\to\infty.
  \end{equation*}
  Inserting this into \eqref{eq:exchange}, using that
  $P^{\tau}_{\nu}[\cA^c]=O(e^{-cN})$, we conclude that, on $\mathscr{S}$,
  $\PP$-a.s.~as $N\to\infty$,
  \begin{equation*}
    \EE\left[E^{\tau}_{\nu}\left[
        e^{-\frac{\lambda}{g_N}S_{\cD}(tR_N)}\right]\,\Big | \,\cG \right]
    = E^{\tau}_{\nu}\left[\overE\left[e^{-\frac{\lambda}{g_N}S_{\cD}(tR_N)}
        \right] \mathbf{1}_{\cA}\right] + O(e^{-cN})\\
    = e^{-\cK t \lambda^{\alpha}} + o(1).
  \end{equation*}
  Since $\mathscr{S}$ occurs $\PP$-a.s.~for $N$ large enough, this completes
the proof.
\end{proof}

%------------------------------------------------------------------------------

\subsection{Quenched convergence}

We strengthen the convergence in \lemref{lem:annealed} in the following way.

\begin{lemma} \label{lem:onedim}
The one-dimensional marginals of the rescaled clock processes
$g_N^{-1}S_{\cD}(tR_N)$ converge in $\PP$-probability as $N\to\infty$, in
$P^{\tau}_{\nu}$-distribution to an $\alpha$-stable law, that is for every
$t>0$ and $\lambda>0$,
\begin{equation*}
	E^{\tau}_{\nu}\left[e^{-\frac{\lambda}{g_N}S_{\cD}(tR_N)}\right]
		\xrightarrow{N\to\infty}
		e^{-\cK\lambda^{\alpha} t} \qquad \text{in }\PP\text{-probability.}
\end{equation*}
\end{lemma}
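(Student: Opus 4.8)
The plan is to upgrade the $\cG$-conditional (quasi-annealed) convergence of \lemref{lem:annealed} to convergence in $\PP$-probability by a second moment argument over the environment. Write $F_N:=E^{\tau}_{\nu}\big[e^{-\frac{\lambda}{g_N}S_{\cD}(tR_N)}\big]$, which is an $\cF$-measurable random variable with values in $[0,1]$, and which is a function of $\cG$ and of the family $\overline E=(\overline E_x)_{x\in\cD_N}$ only, with $\overline E$ independent of $\cG$; hence $\EE[F_N\mid\cG]=\overE[F_N]$. By \lemref{lem:annealed} and bounded convergence, $\EE_{\PP}[F_N]=\EE_{\PP}\big[\EE[F_N\mid\cG]\big]\to e^{-\cK\lambda^{\alpha}t}$. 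Thus it suffices to prove $\EE_{\PP}[F_N^{2}]\to e^{-2\cK\lambda^{\alpha}t}$, for then $\Var_{\PP}(F_N)\to0$, so $F_N\to e^{-\cK\lambda^{\alpha}t}$ in $L^2(\PP)$ and a fortiori in $\PP$-probability. Using bounded convergence once more, it is enough to show
\[
  \EE[F_N^{2}\mid\cG]=\overE[F_N^{2}]\xrightarrow{N\to\infty}e^{-2\cK\lambda^{\alpha}t},\qquad\PP\text{-a.s.}
\]

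To compute $\overE[F_N^{2}]$ I would work, exactly as in \lemref{lem:annealed}, on the separation event $\mathscr{S}$, which is $\cG$-measurable and holds $\PP$-a.s.\ for $N$ large by \lemref{lem:sparsetop}; on $\mathscr{S}$ the law of $Y$, the set $\cD_N$ and the scale $R_N$ are all $\cG$-measurable since no transition rate $q_{xy}$ involves the depths of deep traps. Writing $F_N^{2}$ as an expectation over two independent copies $Y^{(1)},Y^{(2)}$ of $Y$ started from $\nu$ but carrying the \emph{same} depths $\overline E$, and abbreviating $\ell^{(i)}(x)=\ell^{(i)}_{tR_N}(x)$, the computation \eqref{eq:exchange}--\eqref{eq:lapcond} goes through verbatim and yields
\[
  \overE[F_N^{2}]=E^{(1)}_{\nu}\otimes E^{(2)}_{\nu}\Big[\prod_{x\in\cD_N}\big(1-\vartheta(\ell^{(1)}(x)+\ell^{(2)}(x))\big)\Big],
\]
with $\vartheta$ the same function analysed there.

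The decisive new point — and what I expect to be the main obstacle — is to show that with high $P^{(1)}_{\nu}\otimes P^{(2)}_{\nu}$-probability the two copies visit \emph{disjoint} sets of deep traps before time $tR_N$. By independence and \lemref{lem:approxhit},
\[
  P^{(1)}_{\nu}\otimes P^{(2)}_{\nu}\big[\exists x\in\cD_N:\ H^{(1)}_x\le tR_N,\ H^{(2)}_x\le tR_N\big]\le\sum_{x\in\cD_N}P^{\tau}_{\nu}[H_x\le tR_N]^{2},
\]
and with $|\cD_N|\le c2^{(1-\gamma')N}$ (\eqref{eq:sizetop}), $R_N\le 2^{(\gamma+\varepsilon)N}$ (\lemref{lem:boundrn}) and $\gamma-\gamma'=\varepsilon_0$ (\eqref{eq:gamma1}), the right-hand side is at most $c2^{(\gamma-1+\varepsilon_0+c'\varepsilon)N}$, which tends to $0$ because $\varepsilon_0\le\tfrac12(1-\gamma)$ and $\varepsilon$ is small. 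On this disjointness event at least one of $\ell^{(1)}(x),\ell^{(2)}(x)$ vanishes for every $x\in\cD_N$, hence $\big(\ell^{(1)}(x)+\ell^{(2)}(x)\big)^{\alpha}=\ell^{(1)}(x)^{\alpha}+\ell^{(2)}(x)^{\alpha}$ and therefore
\[
  2^{(\gamma'-\gamma)N}\sum_{x\in\cD_N}\big(\ell^{(1)}(x)+\ell^{(2)}(x)\big)^{\alpha}=L^{(1)}_N(t)+L^{(2)}_N(t).
\]
Intersecting with the events $\cA^{(1)},\cA^{(2)}$ of \eqref{eq:eventA} for the two copies (all relevant local times $\le N^2m_N$, and $|L^{(i)}_N(t)-t|\le 2^{-\varepsilon_0N/5}$), which have $P^{(i)}_{\nu}$-probability $1-O(e^{-cN})$ by \propref{prop:conclocal} and \lemref{lem:nolargelocaltimes}, the arguments of $\vartheta$ become uniformly $o(1)$, and repeating the computation \eqref{eq:theta} of \lemref{lem:annealed} with $L_N(t)$ replaced by $L^{(1)}_N(t)+L^{(2)}_N(t)$ gives, on this good event, $\prod_{x\in\cD_N}\big(1-\vartheta(\cdots)\big)=\exp\{-\cK\lambda^{\alpha}\big(L^{(1)}_N(t)+L^{(2)}_N(t)\big)(1+o(1))\}=e^{-2\cK\lambda^{\alpha}t}+o(1)$. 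Since the integrand is bounded by $1$ and the complementary event has $P^{(1)}_{\nu}\otimes P^{(2)}_{\nu}$-probability $o(1)$, this shows $\overE[F_N^{2}]=e^{-2\cK\lambda^{\alpha}t}+o(1)$, $\PP$-a.s.\ on $\mathscr{S}$, and hence $\PP$-a.s., which is what was needed. The one point requiring care is that the two-copy hitting bound factorises \emph{exactly} only because both copies are started from the stationary measure $\nu$; it is also here that the sparseness of $\cD_N$, i.e.\ the left inequality in \eqref{cond:alphabeta}, re-enters.
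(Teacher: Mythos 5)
Your proof is correct and follows the same overall second-moment strategy as the paper's: reduce to showing $\overE[F_N^2]\to e^{-2\cK\lambda^\alpha t}$ $\PP$-a.s., introduce two independent copies of $Y$ sharing the same depths, and use a disjointness event to make the contributions of the two copies effectively independent. The paper, however, handles the disjointness event $\cC$ slightly differently: rather than your direct union bound $\sum_{x\in\cD_N}P^\tau_\nu[H_x\le tR_N]^2$, it invokes \lemref{lem:nolargelocaltimes} to first control the \emph{number} of distinct deep traps that $Y^{(1)}$ visits (at most $2^{3\varepsilon_0 N/2}$ with high probability), and then bounds the probability that $Y^{(2)}$ hits any of these via \lemref{lem:approxhit}. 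Your union bound is actually more elementary and gives the needed exponential decay just as well (the exponent $\gamma-1+\varepsilon_0+4\varepsilon$ is negative by \eqref{eq:epsilon0}), so nothing is lost; the paper's route through \lemref{lem:nolargelocaltimes} is presumably used because that lemma is needed elsewhere anyway. A second cosmetic difference: you exploit disjointness by observing that $(\ell^{(1)}+\ell^{(2)})^\alpha$ is additive when one summand vanishes and feed $L^{(1)}_N(t)+L^{(2)}_N(t)$ into the $\vartheta$-analysis, whereas the paper uses disjointness at the level of the $\overE$-average, factorizing $\overE[e^{-\frac{\lambda}{g_N}\sum_x(\ell^{(1)}(x)+\ell^{(2)}(x))\tau_x}]$ into a product of two single-copy Laplace transforms because the relevant $\tau_x$'s are hit by at most one copy; the two bookkeeping choices are equivalent. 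One small point worth making explicit in your write-up: on $\hat\cA$ the argument of $\vartheta$ can be as large as $2N^2m_N$ rather than $N^2m_N$, so you are implicitly using that the asymptotics \eqref{e:thetaa}--\eqref{eq:thetamid} hold uniformly on $[0,2N^2 m_N]$; this is true (the doubling changes $\omega(N)$ only by an additive constant), but since \lemref{lem:annealed} states them only for $u\le N^2m_N$ it deserves a sentence.
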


\begin{proof}
It will be enough to show that
$\PP$-a.s.~for $N$ large enough,
\begin{equation} \label{eq:quenched}
  \EE\Big[E^{\tau}_{\nu}\big[
      e^{-\frac{\lambda}{g_N}S_{\cD}(tR_N)}\big]^2 \Big |\,\cG \Big]
  = e^{-2\cK\lambda^{\alpha} t} +o(1).
\end{equation}
Indeed, if \eqref{eq:quenched} holds, then the conditional variance
\begin{equation*}
  \Var\Big[E^{\tau}_{\nu}\big[
      e^{-\frac{\lambda}{g_N}S_{\cD}(tR_N)}\big]\Big |\,\mathcal G \Big]
  \xrightarrow{N\to\infty} 0, \qquad \mathbb P\text{-a.s.},
\end{equation*}
and the claim follows by an application of the Chebyshev inequality and
Lemma~\ref{lem:annealed}.

To show \eqref{eq:quenched}, we rewrite
\begin{equation*}
  \EE\Big[E^{\tau}_{\nu}\big[
      e^{-\frac{\lambda}{g_N}S_{\cD}(tR_N)}\big]^2 \Big|\,\cG\Big]
  = \EE\Big[\hat{E}^{\tau}_{\nu}\big[
      e^{-\frac{\lambda}{g_N}\sum_{x\in \cD_N}
        (\ell_{tR_N}^{(1)}(x)+\ell_{tR_N}^{(2)}(x))\tau_x}\big]\Big |
    \,\mathcal G\Big],
\end{equation*}
where $\ell^{(1)}$ and $\ell^{(2)}$ are the local times of two independent
Markov chains $Y^{(1)}$ and $Y^{(2)}$, both having law $P^{\tau}_{\nu}$, and
$\hat{E}^{\tau}_{\nu}$ is the expectation with respect to the joint law
$\hat{P}^{\tau}_{\nu}$ of these chains. Again $\PP$-a.s.~for $N$ large enough
the separation event $\mathscr{S}$ holds, and on this event
the law $\hat{P}^{\tau}_{\nu}$ is $\mathcal G$-measurable. Therefore
we can exchange the expectations similarly as before. As in \lemref{lem:annealed}, it
will be enough to control the expression on an event of
$\hat{P}^{\tau}_{\nu}$-probability tending to $1$ as $N\to\infty$. We thus
set
$\hat{\cA}=\cA^{(1)}\cap\cA^{(2)}$ where $\cA^{(i)}$
are defined for both chains $Y^{(i)}$ as in \eqref{eq:eventA}.
Applying \propref{prop:conclocal} and
\lemref{lem:nolargelocaltimes} for both
independent chains, we have that $\PP$-a.s.~as $N\to\infty$,
$\hat{P}^{\tau}_{\nu}[\hat{\cA}^c]=O(e^{-cN})$.

Let $\mathcal C$ be the event that $Y^{(1)}$ and $Y^{(2)}$ visit
disjoint sets of deep traps,
\begin{equation*}
  \cC=\left\{\{x\in \cD_N:~\ell_{tR_N}^{(1)}(x)>0\}
    \cap\{x\in \cD_N:~\ell_{tR_N}^{(2)}(x)>0\}=\emptyset\right\}.
\end{equation*}
We claim that $\hat{P}^{\tau}_{\nu}[\cC^c]= O(e^{-cN})$, $\PP$-a.s.~as
$N\to\infty$. Indeed, by
\lemref{lem:nolargelocaltimes},
with probability larger than $1-c2^{-\frac{1}{4}\varepsilon_0 N}$, the chain
$Y^{(1)}$ visits at most $2^{\frac{3}{2}\varepsilon_0N}$ different vertices in
$\cD_N$. By \lemref{lem:approxhit}, each of those vertices has probability
smaller than $c2^{(\gamma-1+\varepsilon)N}$ of being hit by $Y^{(2)}$, for every
$\varepsilon>0$, $\PP$-a.s.~for $N$ large enough. Therefore by the choice
\eqref{eq:epsilon0} of $\varepsilon_0$, $\PP$-a.s.~for $N$ large enough,
\begin{equation*}
	\hat{P}^{\tau}_{\nu}[\cC^c] \leq c2^{-\frac{1}{4}\varepsilon_0 N}
		+ 2^{\frac{3}{2}\varepsilon_0 N} c'2^{(\gamma-1+\varepsilon)N}
		\leq c2^{-\frac{1}{4}\varepsilon_0 N} + c'2^{-\frac{1}{2}\varepsilon_0N
			+\varepsilon N},
\end{equation*}
which decays exponentially if $\varepsilon<\varepsilon_0/2$.

Since on $\cC$ the $\tau_x$ of the vertices $x\in \cD_N$ visited by
$Y^{(1)}$ and $Y^{(2)}$ are independent, and since the integrand is bounded
by 1, we have on the separation event $\mathscr{S}$,
\begin{align*}
	\EE&\left[E^{\tau}_{\nu}\left[
			e^{-\frac{\lambda}{g_N}S_{\cD}(tR_N)}\right]^2 \mid\cG\right]\\
	&=\hat{E}^{\tau}_{\nu}\left[\overline{\EE}\left[
		e^{-\frac{\lambda}{g_N}\sum_{x\in \cD_N}
		(\ell_{tR_N}^{(1)}(x)+\ell_{tR_N}^{(2)}(x))\tau_x}\right]\right]\\
	&=\hat{E}^{\tau}_{\nu}\left[\overline{\EE}\left[
		e^{-\frac{\lambda}{g_N}\sum_{x\in \cD_N}
		(\ell_{tR_N}^{(1)}(x)+\ell_{tR_N}^{(2)}(x))\tau_x}\right]
		\mathbf{1}_{\hat{\cA}\cap\cC} \right]	+ O(e^{-cN})\\
	&= \hat{E}^{\tau}_{\nu}\left[\overline{\EE}\left[
		e^{-\frac{\lambda}{g_N}\sum_{x\in \cD_N}
		\ell_{tR_N}^{(1)}(x)\tau_x}\right]
		\overline{\EE}\left[
		e^{-\frac{\lambda}{g_N}\sum_{x\in \cD_N}
		\ell_{tR_N}^{(2)}(x)\tau_x}\right]
		\mathbf{1}_{\hat{\cA}\cap\cC} \right]	+ O(e^{-cN}).
\end{align*}
Using the same procedure as in the proof of \lemref{lem:annealed}, on
the event $\hat{\cA}$, the two inner expectations,
$x\in\cD_N$, both converge to
\begin{equation*}
	\exp\bigg\{-\cK \lambda^{\alpha} 2^{(\gamma'-\gamma)N}
		\sum_{x\in \cD_N} \ell_{tR_N}^{(i)}(x)^{\alpha}\bigg\}
	=\exp\big\{-\cK\lambda^{\alpha}L_N^{(i)}(t)\big\}, \quad i=1,2.
\end{equation*}
Moreover, on $\hat{\cA}$, the
local time functionals $L_N^{(i)}(t)$ concentrate on $t$ simultaneously.
It follows that on $\mathscr{S}$, $\PP$-a.s.~as $N\to\infty$,
\begin{equation*}
  \EE\left[E^{\tau}_{\nu}\left[
      e^{-\frac{\lambda}{g_N}S_{\cD}(tR_N)}\right]^2 \Big |\, \cG \right]
  = e^{-2\cK \lambda^{\alpha} t} +o(1).
\end{equation*}
Noting again that $\mathscr{S}$ occurs $\PP$-a.s.~for $N$ large enough,
this shows \eqref{eq:quenched}, and hence the lemma.
\end{proof}

\begin{remark}\label{rem:convmode}
  (a) Inspecting the last proof carefully, it follows that
  Lemma~\ref{lem:onedim} can be slightly strengthened. Namely, the stated
  convergence holds a.s.~with respect to $\xi$ and
  $\underline{E}$, and in probability only with respect to
  $\overline{E}$. The same remark then applies to Theorem~\ref{thm:main}.

  (b) A closer analysis of the errors made in the computation of the quasi-annealed
  Laplace transform, in particular in \eqref{eq:squarexp}, shows that the error
  in \lemref{lem:annealed} and \eqref{eq:quenched} is of order
  $O(N^{-1}\log^2N)$, where the logarithmic part comes from the
  $\log^2 u$ part in \eqref{eq:squarexp}, $u$ being bounded by $N^2m_N$, and
  $m_N$ being polynomial in $N$.
  Therefore the variance decay is not enough to apply the
  Borel-Cantelli lemma and obtain $\PP$-a.s.~convergence.

  (c) Note also that the previous proof, more precisely bounding the
  $\log^2 u$ part of \eqref{eq:squarexp}, requires that
  $\log (N^2 m_N)\ll N^{1/2}$. This is where our improved techniques to
  estimate the spectral gap in \propref{prop:spectralgap} are necessary.
  As we already remarked, the techniques of \cite{FIKP98} show roughly
  that $m_N \le e^{\sqrt {N \log N}}$ only, which is not sufficient.
\end{remark}

%------------------------------------------------------------------------------

\subsection{Joint convergence of increments}

In the next step, we extend the convergence to joint convergence of increments.

\begin{lemma} \label{lem:increments}
The increments of the rescaled clock processes $g_N^{-1}S_{\cD}(tR_N)$
converge jointly in $\PP$-probability in $P^{\tau}_{\nu}$-distribution to the
increments of an $\alpha$-stable subordinator.
\end{lemma}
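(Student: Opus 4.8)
The plan is to prove joint convergence of increments via the joint Laplace transform, following the same two-stage scheme (a quasi-annealed computation, then quenching by a second-moment estimate) used for \lemref{lem:onedim}. Fix $0=t_0<t_1<\dots<t_n\le T$ and $\lambda_1,\dots,\lambda_n>0$; it suffices to show that
\begin{equation*}
  E^{\tau}_{\nu}\Big[\exp\Big\{-\sum_{i=1}^n \tfrac{\lambda_i}{g_N}\big(S_{\cD}(t_iR_N)-S_{\cD}(t_{i-1}R_N)\big)\Big\}\Big]
  \xrightarrow{N\to\infty} \exp\Big\{-\cK\sum_{i=1}^n\lambda_i^{\alpha}(t_i-t_{i-1})\Big\}
\end{equation*}
in $\PP$-probability, since the right-hand side is the joint Laplace transform of the increments over $[t_{i-1},t_i]$ of an $\alpha$-stable subordinator. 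Writing the exponent as $\frac{1}{g_N}\sum_{x\in\cD_N}a_x(Y)\tau_x$ with $a_x(Y)=\sum_{i=1}^n\lambda_i(\ell_{t_iR_N}(x)-\ell_{t_{i-1}R_N}(x))$, I would first condition on $\cG$; on the separation event $\mathscr{S}$, which holds $\PP$-a.s.~for $N$ large by \lemref{lem:sparsetop}, the law of $Y$ is $\cG$-measurable, so the $\overE$-expectation over the depths of the deep traps can be exchanged with $E^{\tau}_{\nu}$ and factorized over $x\in\cD_N$ exactly as in \eqref{eq:lapcond}. The per-site factor is of the form $1-\vartheta_{\lambda}(\cdot)$ with argument $a_x(Y)\le (\textstyle\sum_i\lambda_i)N^2m_N$ on the event $\cA$ of \eqref{eq:eventA} (extended to all the times $t_i$), and the asymptotics $\vartheta_\lambda(u)=\alpha\beta C\,2^{(\gamma'-\gamma)N}\lambda^{\alpha}u^{\alpha}(1+o(1))$ established in the proof of \lemref{lem:annealed} apply verbatim.

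The genuinely new ingredient is to replace the concentration statement $2^{(\gamma'-\gamma)N}\sum_{x\in\cD_N}\ell_{tR_N}(x)^{\alpha}\to t$ of \propref{prop:conclocal} by its increment version. To this end I would first show that, $\PP$-a.s.~for $N$ large, with $P^{\tau}_{\nu}$-probability at least $1-e^{-cN}$ no deep trap is visited in two distinct intervals $[t_{i-1}R_N,t_iR_N)$: since $\PP$-a.s.~each visited deep trap is ``visited only once after mixing'' with an excursion of length at most $N^2m_N$ (as in the proof of \lemref{lem:ltxk}) and at most $2^{\frac32\varepsilon_0N}$ deep traps are visited before $t_nR_N$ (\lemref{lem:nolargelocaltimes}), a straddling trap must be visited in some window $[t_iR_N-N^2m_N,t_iR_N]$; by stationarity and \lemref{lem:approxhit}, $P^{\tau}_{\nu}[H_x\le N^2m_N]\le c2^{(2\varepsilon-1)N}$, so a union bound over $x\in\cD_N$ and over the $n$ boundaries gives probability $\le cn2^{(-\gamma'+2\varepsilon)N}\le c2^{-N/3}$ because $\gamma'>1/2$ — this is exactly the analogue of the first term in \eqref{e:520}. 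On the complement, for each visited deep trap $x$ there is a unique interval index $j(x)$ with $a_x(Y)=\lambda_{j(x)}\ell_{t_nR_N}(x)$, hence
\begin{equation*}
  2^{(\gamma'-\gamma)N}\sum_{x\in\cD_N} a_x(Y)^{\alpha}
  =\sum_{j=1}^n\lambda_j^{\alpha}\,2^{(\gamma'-\gamma)N}\sum_{x:\,j(x)=j}\ell_{t_nR_N}(x)^{\alpha}
  =\sum_{j=1}^n\lambda_j^{\alpha}\big(L_N(t_j)-L_N(t_{j-1})\big),
\end{equation*}
which converges to $\sum_{j=1}^n\lambda_j^{\alpha}(t_j-t_{j-1})$ by \propref{prop:conclocal} applied at $t_{j-1}$ and $t_j$. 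Substituting this into the factorized Laplace transform and using $\log(1-\vartheta)=-\vartheta(1+o(1))$ as in \eqref{eq:theta} gives, on $\mathscr{S}\cap\cA$ and hence (since $P^{\tau}_{\nu}[\cA^c\cup\{\text{straddling}\}]=O(e^{-cN})$) $\PP$-a.s.~as $N\to\infty$, the quasi-annealed limit $\exp\{-\cK\sum_j\lambda_j^{\alpha}(t_j-t_{j-1})\}$.

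Finally I would upgrade this quasi-annealed convergence to convergence in $\PP$-probability by the second-moment argument of \lemref{lem:onedim}: introduce two independent copies $Y^{(1)},Y^{(2)}$ with law $P^{\tau}_{\nu}$, observe that on $\mathscr{S}$ their joint law is $\cG$-measurable, and that by \lemref{lem:nolargelocaltimes} and \lemref{lem:approxhit} they visit disjoint sets of deep traps with $\hat P^{\tau}_{\nu}$-probability $1-O(e^{-cN})$, exactly as for the event $\cC$ there; on that event the depths governing the two copies are independent, so the $\cG$-conditional second moment of the joint Laplace transform factorizes and converges to the square of its $\cG$-conditional first moment. The $\cG$-conditional variance therefore vanishes $\PP$-a.s., and Chebyshev's inequality combined with the quasi-annealed limit yields the lemma. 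The only point requiring real care is the ``no deep trap straddles an interval boundary'' estimate; all the remaining steps are a direct transcription of the arguments of \lemref{lem:annealed} and \lemref{lem:onedim}.
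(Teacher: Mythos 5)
Your proposal is correct but takes a genuinely different route from the paper. The paper's proof of Lemma~\ref{lem:increments} does \emph{not} redo the quasi-annealed Laplace transform computation for the joint marginal: instead it cuts each interval $[t_{i-1}R_N,t_iR_N]$ short by $N^2m_N$, introduces the events $\cV(\cup_i I^i)$ (no deep trap visited in the windows $I^i=[t_iR_N-N^2m_N,t_iR_N]$, which makes the truncation harmless) and $\cM$ (the chain has mixed in each window, by \propref{prop:mixtime} and \lemref{l:tmixprop}, so $Y_{t_iR_N}$ is $\nu$-distributed), and then observes that on $\cV\cap\cM$ the increments of $S_\cD$ are \emph{distributed} as the clock processes of $k$ independent $\nu$-started copies of $Y$. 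This reduces the joint convergence to a product of one-dimensional Laplace transforms, each handled by \lemref{lem:onedim}, with no new quasi-annealed analysis. You instead keep the full interval, write the exponent as $g_N^{-1}\sum_{x\in\cD_N}a_x(Y)\tau_x$, and rerun the entire machinery of \lemref{lem:annealed} (conditioning on $\cG$, factorizing over deep traps, the $\vartheta$-asymptotics) plus the second-moment quenching of \lemref{lem:onedim}. The crucial new step you supply is the ``no straddling'' event: since $a_x(Y)^\alpha$ does not split linearly over subintervals for $\alpha<1$, you must first show that with high $P^\tau_\nu$-probability each visited deep trap is visited in only one $[t_{j-1}R_N,t_jR_N)$, which plays the structural role that $\cV$ plays in the paper (and your probability estimate via \lemref{lem:approxhit}, the uniqueness of visits ``after mixing'' from \lemref{lem:ltxk}, and the $|\cD_N|$ bound is correct, giving $\le c\,2^{(-\gamma'+2\varepsilon)N}$). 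On that event you correctly get $2^{(\gamma'-\gamma)N}\sum_x a_x(Y)^\alpha=\sum_j\lambda_j^\alpha\big(L_N(t_j)-L_N(t_{j-1})\big)$ and invoke \propref{prop:conclocal} at the endpoints. Both arguments are sound; the paper's is shorter and cleaner because it isolates the renewal structure (mixing resets the chain at each $t_iR_N$) and avoids touching the quasi-annealed computation again, whereas yours is more direct but duplicates a substantial amount of the analysis from Sections~5 and 6.1--6.2 and requires care with the $\alpha$-power non-additivity that the paper sidesteps entirely.
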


\begin{proof}
Fix $k\geq1$ and $0=t_0<t_1<\cdots<t_k $. We will show that for every
$\lambda_1,\dots,\lambda_k \in (0,\infty)$ and $\PP$-a.e.~environment
$\tau $,
\begin{equation}
  \label{eq:increments}
  \lim_{N\to\infty}E^{\tau}_{\nu}\left[e^{-\frac{1}{g_N} \sum_{i=1}^k
      \lambda_i(S_{\cD}(t_iR_N)-S_{\cD}(t_{i-1}R_N))} \right]
  = \lim_{N\to\infty} \prod_{i=1}^k E^{\tau}_{\nu}\left[e^{-\frac{\lambda_i}{g_N}
      S_{\cD}((t_i-t _{i-1})R_N)} \right].
\end{equation}
Then the lemma follows by using the above proved convergence in
$\PP$-probability in $P^{\tau}_{\nu}$-distribution of the one-dimensional
marginals.

Let $ I^i=[t_{i}R_N-N^2m_N,t_iR_N]$. For a set $I\subset[0,\infty)$, let
$\mathcal V(I)$ be the event
\begin{equation*}
	\cV(I) = \{Y_s\notin \cD_N \text{ for all }s\in I\}.
\end{equation*}
On the event $\cV\left(\cup_{i=1}^k I^i\right)$, for every $i\le k$,
\begin{equation}\label{eq:eventv}
	S_{\cD}(t_iR_N)-S_{\cD}(t_{i-1}R_N)
		= S_{\cD}(t_iR_N-N^2m_N) - S_{\cD}(t_{i-1}R_N).
\end{equation}
Moreover, by \lemref{lem:approxhit},  $\PP$-a.s.~for all $x\in \cD_N$, for
$\varepsilon>0$ small and $N$ large enough,
\begin{equation*}
  P^{\tau}_{\nu}[H_x \leq N^2m_N] \leq 2^{(\varepsilon-1)N}.
\end{equation*}
By \eqref{eq:sizetop}, $|\cD_N|\leq c2^{(1-\gamma')N}$, hence the expected
number of vertices $x\in \cD_N$ visited in a time-interval of length $N^2m_N$
is smaller than $c2^{(\varepsilon-\gamma')N}$, $\PP$-a.s.~for $N$ large enough.
This still holds for a finite union of intervals of length $N^2 m_N$, and so
we conclude that by the Markov inequality,
$P^{\tau}_{\nu}\left[\cV\left(\cup_{i=1}^k I^i\right)\right]\to1$,
$\PP$-a.s.~as $N\to\infty$.

The reason to shorten the time intervals as above is to give the Markov chain
$Y$ the time it needs to mix. Define the event
\begin{equation*}
  \cM = \{\tx\circ\theta_{t_iR_N-N^2m_N}\leq N^2m_N\,\forall i=1,\dots,k\}.
\end{equation*}
It is easy to see using Proposition~\ref{prop:mixtime} that
 $P^{\tau}_{\nu}[\cM] \to 1$, $\PP$-a.s.~as $N\to\infty$. On
the event $\cM$ the Markov chain $Y$ always mixes between $t_iR_N-N^2m_N$
and $t_i R_N$ and thus, by \lemref{l:tmixprop}, for every $i=1,\dots,k$
and $y\in\HH_N$,
\begin{equation*}
  P^{\tau}_\nu [Y_{t_iR_N} = y\mid \cM]=\nu_y.
\end{equation*}
Therefore, on $\cM$,
\begin{equation} \label{eq:eventm}
	\left(S_{\cD}(t_iR_N-N^2m_N)-S_{\cD}(t_{i-1}R_N)\right)_{i=1,\dots,k}
	\stackrel{d}{=} \left( S_{\cD}^{(i)}((t_i-t _{i-1})R_N-N^2m_N)
	\right)_{i=1,\dots,k},
\end{equation}
where the $S_{\cD}^{(i)}$ are the clock processes of the deep traps of
independent stationary started processes $Y^{(i)}$ having the same law as $Y$.

Combining observations \eqref{eq:eventv} and \eqref{eq:eventm}, with
the estimates on the probabilities of
$\cV\left(\cup_{i=1}^k I^i\right)$ and $\cM$, since the integrand is
bounded by 1, we obtain that $\PP$-a.s.~as $N\to\infty$,
\begin{align*}
	E^{\tau}_{\nu}&\left[e^{-\frac{1}{g_N} \sum_{i=1}^k
			\lambda_i(S_{\cD}(t_iR_N)-S_{\cD}(t_{i-1}R_N))} \right]\\
	&= E^{\tau}_{\nu}\left[e^{-\frac{1}{g_N} \sum_{i=1}^k
			\lambda_i(S_{\cD}(t_iR_N-N^2m_N)-S_{\cD}(t_{i-1}R_N))}
			\mathbf{1}_{\cV\left(\cup_{i=1}^k I^i\right) \cap \cM} \right] +o(1)\\
	&= E^{\tau}_{\nu}\left[\prod_{i=1}^k E^{\tau}_{\nu}\left[
			e^{-\frac{\lambda_i}{g_N} S_{\cD}^{(i)}((t_i-t _{i-1})R_N-N^2m_N)}\right]
			\mathbf{1}_{\cV\left(\cup_{i=1}^k I^i\right) \cap \cM} \right] +o(1)\\
	&= \prod_{i=1}^k E^{\tau}_{\nu}\left[
			e^{-\frac{\lambda_i}{g_N} S_{\cD}^{(i)}((t_i-t _{i-1})R_N-N^2m_N)}
			\right] +o(1).
\end{align*}
Using analogous arguments it can be shown that
for every $i=1,\dots,k$, $\PP$-a.s.~as $N\to\infty$,
\begin{align*}
	E^{\tau}_{\nu}\left[
			e^{-\frac{\lambda_i}{g_N} S_{\cD}^{(i)}((t_i-t _{i-1})R_N-N^2m_N)}
			\right]
	= E^{\tau}_{\nu}\left[e^{-\frac{\lambda_i}{g_N}
			S_{\cD}^{(i)}((t_i-t _{i-1})R_N)}\right] +o(1).
\end{align*}
Combining the last two equations proves \eqref{eq:increments} and hence the lemma.
\end{proof}

%------------------------------------------------------------------------------

\subsection{Tightness in the Skorohod topology}

The last step in the proof of \propref{prop:deep} is to show tightness.

\begin{lemma} \label{lem:tight}
The sequence of probability measures
$P^{\tau}_{\nu}\big[g_N^{-1}S_{\cD}(tR_N)\in\,\cdot\,\big]$ is
$\PP$-a.s.~tight with respect to the Skorohod $M_1$-topology on $D([0,T],\RR)$.
\end{lemma}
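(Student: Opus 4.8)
The plan is to exploit the fact that $t\mapsto g_N^{-1}S_\cD(tR_N)$ is nondecreasing and vanishes at $t=0$: for monotone paths, $M_1$-tightness is governed entirely by the one-dimensional marginals, which we have already controlled.

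First I would invoke the standard characterization of tightness in the $M_1$-topology on $D([0,T],\RR)$ (see e.g.\ Whitt's monograph \emph{Stochastic-Process Limits}): a sequence of laws is $M_1$-tight provided the sup-norms are tight, the $M_1$ oscillation modulus $w_s(\cdot,\delta)$ is uniformly small as $\delta\downarrow0$, and the endpoint oscillations at $0$ and $T$ are uniformly small. For a nondecreasing $x$ one has $x(t_2)\in[x(t_1)\wedge x(t_3),x(t_1)\vee x(t_3)]$ whenever $t_1\le t_2\le t_3$, so $w_s(x,\delta)\equiv0$; and since $X_N:=g_N^{-1}S_\cD(\cdot R_N)$ satisfies $X_N\ge0$ and $X_N(0)=0$, we have $\|X_N\|_{[0,T]}=X_N(T)$ and the endpoint conditions reduce to control of $X_N(\delta)$ and of the increment $X_N(T)-X_N(T-\delta)$. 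This leaves three statements to prove, each $\PP$-a.s.: for every $\varepsilon>0$, $\lim_{a\to\infty}\limsup_N P^\tau_\nu[X_N(T)>a]=0$, $\lim_{\delta\downarrow0}\limsup_N P^\tau_\nu[X_N(\delta)>\varepsilon]=0$, and the same with $X_N(\delta)$ replaced by $X_N(T)-X_N(T-\delta)$.

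Next I would derive these from the Laplace-transform estimates already in hand. The crude bound $1-e^{-y}\le y^\alpha$ ($0<\alpha<1$) is not enough here because of the heavy tails of the $\tau_x$; instead I would use the quasi-annealed computation behind \lemref{lem:annealed}, which shows $\PP$-a.s.\ (on the separation event $\mathscr S$) that $\EE[E^\tau_\nu[1-e^{-\lambda X_N(t)}]\mid\cG]\le\cK\lambda^\alpha t+o(1)$ for each fixed $\lambda>0$ and $t$, and likewise for increments via \lemref{lem:increments}, together with the vanishing of the conditional variance established in the proof of \lemref{lem:onedim}. A Chebyshev argument then gives $\limsup_N E^\tau_\nu[1-e^{-\lambda X_N(t)}]\le\cK\lambda^\alpha t$ (along $\PP$-a.s.\ subsequences, after a diagonal over rational $\lambda$), whence $\limsup_N P^\tau_\nu[X_N(t)>a]\le\cK\lambda^\alpha t/(1-e^{-\lambda a})$; letting $a\to\infty$ and then $\lambda\downarrow0$ kills this, and the analogous estimates give the two endpoint conditions. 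Equivalently, one may simply quote \lemref{lem:onedim} and \lemref{lem:increments}: each marginal and increment of $X_N$ converges in $P^\tau_\nu$-distribution to the corresponding marginal of an $\alpha$-stable subordinator $V_\alpha$, which is a.s.\ finite, tends to $0$ as its time argument tends to $0$, and has no fixed-time jump at $T$.

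The only delicate point is the bookkeeping of the mode of convergence: since \lemref{lem:onedim} provides convergence only in $\PP$-probability, the marginal tightness is extracted along $\PP$-a.s.\ convergent subsequences (equivalently, via the $\PP$-a.s.\ quasi-annealed bound above plus a diagonal over rational $\lambda$), which is exactly the form in which this lemma is combined with \lemref{lem:onedim} and \lemref{lem:increments} in the proof of \propref{prop:deep}. Beyond this I expect no real obstacle: the $M_1$-topology is tailor-made for monotone paths, and the vanishing of the oscillation modulus makes the argument essentially immediate once the marginals are under control.
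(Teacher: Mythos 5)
Your proof takes essentially the same route as the paper: both invoke Whitt's $M_1$-tightness criterion, observe that monotonicity of the clock process annihilates the oscillation modulus $w_f$, and then reduce the sup-norm and endpoint conditions to the already-established convergence of one-dimensional marginals (Lemma~\ref{lem:onedim}). You are in fact slightly more careful than the paper about the mode of convergence — the paper's proof quietly relies on marginal convergence that Lemma~\ref{lem:onedim} only delivers in $\PP$-probability while the tightness statement is claimed $\PP$-a.s. — but this is a bookkeeping refinement, not a different argument.
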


\begin{proof}
  The proof is standard but we include it for the sake
  of completeness. By \cite[Theorem~12.12.3]{W02}, the tightness in the
  Skorohod $M_1$-topology on $D([0,T],\RR)$ is characterized in the
  following way: For $f\in D([0,T],\RR)$, $\delta>0$, $t\in[0,T]$, let
  \begin{align*}
    w_f(\delta) &= \sup\left\{\inf_{\alpha\in[0,1]}|f(t)-(\alpha f(t_1)
        +(1-\alpha)f(t_2))|:~t_1\leq t\leq t_2\leq T,~t_2-t_1
      \leq \delta\right\},\\
    v_f(t,\delta) &= \sup\left\{|f(t_1)-f(t_2)|:~t_1,t_2\in[0,T]\cap
      (t-\delta,t+\delta)\right\}.
  \end{align*}
  The sequence of probability measures
  $P_N=P^{\tau}_{\nu}\big[g_N^{-1}S_{\cD}(tR_N)\in\,\cdot\,\big]$ on
  $D([0,T],\RR)$ is tight in the $M_1$-topology, if
  \begin{enumerate}[(i)]
    \item For every $\varepsilon>0$ there is $c$ such that
    \begin{equation}\label{eq:tight1}
      P_N[f:~\|f\|_{\infty}>c]\leq \varepsilon, \quad N\geq1.
    \end{equation}
    \item For every $\varepsilon>0$ and $\eta>0$, there exist $\delta\in(0,T)$
    and $N_0$ such that
    \begin{equation}\label{eq:tight2}
      P_N[f:~w_f(\delta)\geq\eta]\leq\varepsilon, \quad N\geq N_0,
    \end{equation}
    and
    \begin{equation} \label{eq:tight3}
      P_N[f:~v_f(0,\delta)\geq\eta]\leq\varepsilon \text{ and }
      P_N[f:~v_f(T,\delta)\geq\eta]\leq\varepsilon, \quad N\geq N_0.
    \end{equation}
  \end{enumerate}

Since the clock processes are increasing, \eqref{eq:tight1} is equivalent
to convergence of the distribution of $g_N^{-1}S_{\cD}(TR_N)$, which
follows from the convergence of the Laplace transform of the marginal at time
$T$. \eqref{eq:tight2} is immediate from the fact that the oscillating
function $w_f(\delta)$ is always zero since the processes
$g_N^{-1}S_{\cD}(tR_N)$ are increasing. To check \eqref{eq:tight3},
again by the monotonicity of the $g_N^{-1}S_{\cD}(tR_N)$ it is enough to
check that for $\delta$ small enough and $N\geq N_0$,
$P^{\tau}_{\nu}[g_N^{-1}S_{\cD}(\delta R_N)\geq\eta]\leq \varepsilon$. By the
convergence of the marginal at time $\delta$, we may take $\delta$ such that
$\PP[V_{\alpha}(\delta)\geq\eta]\leq \frac{\varepsilon}{2}$ and $N_0$ such that
for $N\geq N_0$,
\begin{equation*}
	\left|P^{\tau}_{\nu}\left[\frac{1}{g_N}S_{\cD}(\delta R_N)\geq\eta\right]
		-\PP\left[V_{\alpha}(\delta)\geq\eta\right]\right| \leq \frac{\varepsilon}{2}.
\end{equation*}
The reasoning for $v_f(T,\delta)$ is similar.
\end{proof}

%------------------------------------------------------------------------------
%------------------------------------------------------------------------------
%------------------------------------------------------------------------------
%------------------------------------------------------------------------------

\section{Shallow traps}%<<<1
\label{sec:shallow}

In this section we show that the convergence of the clock process of the deep
traps shown in \secref{sec:deep} is enough for convergence of the clock
process itself.

\begin{proposition} \label{prop:shallow}
Under the assumptions of \thmref{thm:main}, the clock process of the deep
traps approximates the clock process, namely, for every $t\geq0$,
\begin{equation*}
  \frac{1}{g_N}\big(S(tR_N)-S_{\cD}(tR_N)\big)
  \xrightarrow{N\to\infty} 0 \qquad
	\PP\text{-a.s.~in $P^{\tau}_{\nu}$-probability}.
\end{equation*}
\end{proposition}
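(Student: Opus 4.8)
The plan is to show that the difference $S(tR_N)-S_\cD(tR_N)=\int_0^{tR_N}(1\vee\tau_{Y_s})\Ind{Y_s\notin\cD_N}\,ds$, which is the ``clock process of the shallow traps'', is $o(g_N)$ in $P^\tau_\nu$-probability. Since this quantity is monotone in $t$ it suffices to work at a fixed $t$. The natural first move is to bound its expectation under $P^\tau_\nu$. Using stationarity of $\nu$,
\begin{equation*}
  E^\tau_\nu\big[S(tR_N)-S_\cD(tR_N)\big]
  = tR_N\sum_{x\notin\cD_N}\nu_x(1\vee\tau_x)
  = \frac{tR_N}{Z_N}\sum_{x\notin\cD_N}(1\wedge\tau_x)(1\vee\tau_x)
  = \frac{tR_N}{Z_N}\sum_{x\notin\cD_N}\tau_x .
\end{equation*}
By the law of large numbers $\sum_{x\notin\cD_N}\tau_x=\sum_{x\in\HH_N}\tau_x\Ind{\tau_x<g_N'}$ is, with overwhelming $\PP$-probability, comparable to $2^N\EE[\tau_0\Ind{\tau_0<g_N'}]$, which is at most $2^N g_N'{}^{\,1}$ times a harmless factor (more precisely one computes $\EE[\tau_0\Ind{\tau_0<g_N'}]$ via the Gaussian integral; the dominant contribution comes from $\tau_x$ of order $g_N'$, giving something like $g_N' 2^{-\gamma' N}$ up to polynomial factors, but the crude bound $\le c\, g_N' 2^{-\gamma'N}\cdot\text{poly}(N)$ already suffices). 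Combining with $Z_N\ge\kappa 2^N$ from \eqref{e:boundzN} and $R_N\le 2^{(\gamma+\varepsilon)N}$ from \lemref{lem:boundrn}, this gives
\begin{equation*}
  \frac{1}{g_N}E^\tau_\nu\big[S(tR_N)-S_\cD(tR_N)\big]
  \le c\,\frac{g_N'}{g_N}\,2^{(\gamma-\gamma')N}\,\text{poly}(N).
\end{equation*}
Now $g_N'/g_N = e^{(\alpha'-\alpha)\beta^2 N}\cdot\text{poly}(N)$ with $\alpha'<\alpha$ (since $\gamma'<\gamma$ and $\alpha'=\tfrac{\beta_c}{\beta}\sqrt{\gamma'}$), so $g_N'/g_N$ decays exponentially, while $2^{(\gamma-\gamma')N}=2^{\varepsilon_0 N}$ grows only sub-exponentially compared to it \emph{provided} $(\alpha-\alpha')\beta^2 > \varepsilon_0\log 2$; one must check, using \eqref{e:gamma}, \eqref{e:gammarange} and the explicit choice $\gamma'=\gamma-\varepsilon_0$, that the exponential gain from $g_N'/g_N$ indeed beats $2^{(\gamma-\gamma')N}$. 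This is a calculus check on the exponents: writing everything in terms of $\beta_c^2=2\log 2$ one has $(\alpha-\alpha')\beta^2=\beta_c^2(\gamma-\sqrt{\gamma\gamma'})\cdot\tfrac{1}{\text{?}}$\,—more carefully, $\alpha\beta=\beta_c\sqrt\gamma$ so $\alpha\beta^2=\beta\beta_c\sqrt\gamma$ and $(\alpha-\alpha')\beta^2=\beta\beta_c(\sqrt\gamma-\sqrt{\gamma'})$, and one needs $\beta\beta_c(\sqrt\gamma-\sqrt{\gamma'})>(\gamma-\gamma')\log 2=\tfrac12\beta_c^2(\gamma-\gamma')$, i.e.\ $\beta>\tfrac12\beta_c(\sqrt\gamma+\sqrt{\gamma'})$; this may fail in general, so the honest approach is to \emph{choose $\gamma'$ even closer to $\gamma$} for this particular estimate, or equivalently to exploit the fact that $\EE[\tau_0\Ind{\tau_0<g_N'}]$ is actually of order $g_N'2^{-\gamma'N}\text{poly}(N)$ with the extra $2^{-\gamma'N}$ factor saving the day: then the bound becomes $c\,g_N'2^{-\gamma'N}2^N Z_N^{-1}R_N g_N^{-1}\text{poly}(N)\le c\,(g_N'/g_N)2^{(\gamma-\gamma')N}\text{poly}(N)$ and since $g_N'/g_N=2^{-(\gamma-\gamma')N}\cdot(\text{poly})$ by the very definitions of $g_N,g_N'$ in terms of $\alpha,\alpha'$—indeed $\log(g_N/g_N') = (\alpha-\alpha')\beta^2 N+\text{poly}$ and by the scaling relation this equals exactly $(\gamma-\gamma')\cdot\tfrac{\alpha^2\beta^2}{2\gamma}N$... the cleanest route is to observe directly from \thmref{thm:main}'s own heuristic $\PP[\tau_x>u g_N]=u^{-\alpha}2^{-\gamma N}(1+o(1))$ that $\EE[\tau_x\Ind{\tau_x\le g_N'}]\sim \tfrac{\alpha}{1-\alpha}g_N 2^{-\gamma N}(g_N'/g_N)^{1-\alpha}$, and then the exponent bookkeeping closes.

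Given the expectation bound, the conclusion follows by Markov's inequality: $P^\tau_\nu[g_N^{-1}(S(tR_N)-S_\cD(tR_N))\ge\eta]\le\eta^{-1}g_N^{-1}E^\tau_\nu[\cdot]\to0$, $\PP$-a.s.\ as $N\to\infty$, which is exactly the claimed $\PP$-a.s.\ convergence in $P^\tau_\nu$-probability. So the \textbf{main obstacle} is purely the exponent accounting: one must verify that the product $g_N^{-1}\cdot R_N\cdot Z_N^{-1}\cdot 2^N\cdot\EE[\tau_0\Ind{\tau_0<g_N'}]$ tends to $0$. The safe way to guarantee this, and the way I would write it up, is to compute $\EE[\tau_0\Ind{\tau_0<g_N'}]$ precisely using the Gaussian density and the substitution already used in \lemref{lem:annealed} (the integral $\int e^{-s^2/2}e^{\beta\sqrt N s}ds$ truncated at $\tfrac{1}{\beta\sqrt N}\log g_N'$), obtaining $\EE[\tau_0\Ind{\tau_0<g_N'}]= c\, g_N' 2^{-\gamma' N}(1+o(1))$ up to a polynomial factor coming from the $1/(t\sqrt{2\pi})$ prefactor in \eqref{eq:gaussapprox}, since $\alpha'<1$ makes the truncated integral dominated by its upper endpoint. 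Plugging this in, the bound on $g_N^{-1}E^\tau_\nu[S-S_\cD]$ becomes $c\,(g_N'/g_N)\,2^{(\gamma-\gamma')N}\,\text{poly}(N)$, and since by the defining formulas \eqref{def:gn} and the formula for $g_N'$ one has $g_N'/g_N=e^{-(\alpha-\alpha')\beta^2 N}\text{poly}(N)$ while the relation $\alpha\beta=\beta_c\sqrt{\gamma}$ (equivalently $\alpha^2\beta^2/2=\gamma\log 2$) gives $(\alpha-\alpha')\beta^2 = \beta\beta_c(\sqrt\gamma-\sqrt{\gamma'})$; using $\sqrt\gamma-\sqrt{\gamma'}\ge(\gamma-\gamma')/(2\sqrt\gamma)$ and $\beta\beta_c/(2\sqrt\gamma)=\beta^2/(2\alpha)\ge\beta^2/2>\tfrac14\beta_c^2\cdot\tfrac{1}{?}$—for the range $\gamma\in(1/2,1)$ and $\beta$ allowed, one checks $\beta\beta_c(\sqrt\gamma-\sqrt{\gamma'})>(\gamma-\gamma')\log 2$ holds \emph{after possibly shrinking $\varepsilon_0$}, which is harmless since $\gamma'$ only needs to satisfy $1/2<\gamma'<\gamma$; thus the exponential factor wins and the whole expression $\to 0$. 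I would present the argument in the order: (1) reduce to fixed $t$ by monotonicity; (2) compute $E^\tau_\nu[S-S_\cD]$ exactly via stationarity; (3) estimate the environment sum $\sum_{x\notin\cD_N}\tau_x$ by a first-moment/LLN argument using the Gaussian integral; (4) collect exponents using \lemref{lem:boundrn}, \eqref{e:boundzN} and the definitions of $g_N,g_N'$, possibly re-choosing $\gamma'$ closer to $\gamma$; (5) apply Markov's inequality. The tricky exponent step (4) is the heart of it and is where I would be most careful to keep all polynomial-in-$N$ corrections under control, as warned in \remref{rem:convmode}(c).
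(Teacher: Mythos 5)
Your approach is correct in outline and leads to the same exponent condition as the paper; the main difference is in how you control the environment sum $\sum_{x\notin\cD_N}\tau_x$. The paper does not attempt a direct law-of-large-numbers bound on this sum. Instead it splits $\cS_N=\HH_N\setminus\cD_N$ into a ``very shallow'' part $\{\tau_x\le h_N=e^{\delta\alpha\beta^2N}\}$ (where $\sum\tau_x\le 2^N h_N$ trivially) and a sequence of slices $\cS^i_N=\{\tau_x\in[2^{-i}g'_N,2^{-i+1}g'_N)\}$, for each of which it bounds $|\cS^i_N|$ a.s.\ via a binomial Markov bound and Borel--Cantelli, and then uses the crude bound $\tau_x\le 2^{-i+1}g'_N$ inside the slice. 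Summing over slices produces the same $c\,(g'_N/g_N)\,2^{(\gamma-\gamma')N}\cdot\mathrm{poly}(N)$ that you get. The slicing is a cheap way to avoid any second-moment computation for the raw sum $\sum_x\tau_x\Ind{\tau_x<g'_N}$. Your one-shot route would require checking that this truncated sum concentrates; the variance estimate $\EE[\tau_0^2\Ind{\tau_0<g'_N}]\le c\,g_N'^2\,2^{-\gamma'N}\cdot\mathrm{poly}(N)$ (endpoint-dominated since $\alpha'<2$) gives $\Var/(\EE)^2\le c\,2^{-(1-\gamma')N}\cdot\mathrm{poly}(N)$, so Chebyshev plus Borel--Cantelli does close the gap, but you should write this out rather than invoke the LLN, since $\tau_x$ is heavy-tailed and only the truncation saves the argument.

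On the exponent accounting, you are over-hedging. The inequality $\beta\beta_c(\sqrt\gamma-\sqrt{\gamma'})>(\gamma-\gamma')\log 2$, i.e.\ $\tfrac{2\beta}{\beta_c}(\sqrt\gamma-\sqrt{\gamma'})>\gamma-\gamma'$, i.e.\ $\tfrac{2\beta}{\beta_c}>\sqrt\gamma+\sqrt{\gamma'}$, holds automatically for all admissible parameters: $\sqrt\gamma+\sqrt{\gamma'}<2\sqrt\gamma=2\alpha\beta/\beta_c<2\beta/\beta_c$ since $\alpha<1$. There is no need to re-choose $\gamma'$ or shrink $\varepsilon_0$, and in fact $\gamma'$ is fixed globally (it enters $\cD_N$, $R_N$, etc.) so it cannot be moved for this proposition only. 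Also note a small algebraic slip: $\beta\beta_c/(2\sqrt\gamma)=\beta_c^2/(2\alpha)$, not $\beta^2/(2\alpha)$. Finally, your alternative estimate of $\EE[\tau_x\Ind{\tau_x\le g'_N}]$ via the tail $\PP[\tau_x>ug_N]=u^{-\alpha}2^{-\gamma N}$ is not valid: that approximation holds only for $\tau_x$ of order $g_N$, whereas here the truncation point $g'_N$ is exponentially smaller, so the relevant tail asymptotics involve $\alpha'$ and $\gamma'$; your direct Gaussian computation, which correctly gives $\EE[\tau_0\Ind{\tau_0<g'_N}]\sim\tfrac{\alpha'}{1-\alpha'}g'_N 2^{-\gamma'N}$, is the right one and should be the only route you present.
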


\begin{proof}
  We will split the set of shallow traps $\cS_N:=\mathbb H_N\setminus \cD_N$
  into two parts and separately deal with the corresponding contributions
  to the clock process.

  We start with `very shallow traps'. Let
  $\delta>0$ be a small constant which will be fixed later and
  $h_N= e^{\delta\alpha\beta^2 N}$. Define the set of very shallow
  traps as
  \begin{equation*}
    \overline{\cS}_N = \{x\in\HH_N:~\tau_x \leq h_N\}.
  \end{equation*}

  The contribution of this set to the clock process can easily be
  neglected as follows. Write
  \begin{equation*}
    \begin{split}
      E^{\tau}_{\nu}\Bigg[\frac{1}{g_N}\int_0^{tR_N}(1\vee\tau_{Y_s})
        \Ind{Y_s\in \overline{\cS}_N}ds\Bigg]
      &=\frac{1}{g_N} \sum_{x\in\overline{\cS}_N} (1\vee\tau_x)
      E_{\nu}^{\tau}\big[\ell_{tR_N}(x)\big]
    \end{split}
  \end{equation*}
  Note that
  $E^{\tau}_{\nu}[\ell_{tR_N}(x)] = \nu_x tR_N = Z_N^{-1}(1\wedge\tau_x)tR_N$,
  and $(1\vee\tau_x)(1\wedge\tau_x)=\tau_x\leq h_N$ on $\overline{\cS}_N$.
  With \eqref{e:boundzN} for $Z_N$, and \lemref{lem:boundrn} for $R_N$,
  for every $\epsilon>0$, $\PP$-a.s.~for $N$ large enough, the right-hand
  side of the last equation can be bounded from above by
  \begin{equation*}
    g_N^{-1} 2^N h_N Z_N^{-1} tR_N
    \leq c g_N^{-1} e^{\delta\alpha\beta^2 N} 2^{(\gamma+\epsilon)N}.
  \end{equation*}
  To obtain exponential decay of this expression, it is enough to take account
  of the exponential part of $g_N$, which is $e^{\alpha\beta^2N}$. Then, up to
  sub-exponential factors, using that
  $\gamma=\frac{\alpha^2\beta^2}{2\log2}$, the above is bounded by
  \begin{equation*}
    \exp\big\{((\delta-1)\alpha\beta^2 + \frac{1}{2}\alpha^2\beta^2
        + \epsilon\log2) N\big\}.
  \end{equation*}
  Since $\alpha<1$, by choosing $\epsilon$ and $\delta$ small enough this can
  be made smaller than $e^{-cN}$ for some $c>0$. Applying the Markov
  inequality and  the Borel-Cantelli lemma,
  \begin{equation} \label{eq:veryshallow}
    \frac{1}{g_N}\int_0^{tR_N}(1\vee\tau_{Y_s})\Ind{Y_s\in \overline{\cS}_N}ds
    \xrightarrow 0 \qquad \PP\text{-a.s.~in $P_{\nu}^{\tau}$-probability.}
  \end{equation}

  To control the contribution of the remaining shallow traps
  $\mathcal S_N\setminus \overline {\mathcal S}_N$, we first split this
  set into slices $\cS^i_N$ as follows. Set
  \begin{equation*}
    I_N = \ceil{\frac{1}{\log2}(\log g'_N - \log h_N)}.
  \end{equation*}
  Note that by definition of $g'_N$ and $h_N$, for $\delta$ small
  as fixed above, $I_N = cN + O(1)$ for some $c>0$.
  For $i=1,\dots,I_N$, let
  \begin{equation*}
    \cS^i_N=\big\{x\in\mathcal S_N\setminus\overline{\mathcal
        S}_N:~\tau_x \in [2^{-i}g'_N,2^{-i+1}g'_N)\big\},
  \end{equation*}
  so that
  $\mathcal S_N\setminus\overline{\mathcal S}_N= \cup_{i=1}^{I_N} \mathcal S_N^i$.

  We next control the sizes of the slices $\cS^i_N$.
  By the tail approximation \eqref{eq:gaussapprox},
  for all $i=1,\dots,I_N$,
  \begin{equation}\label{eq:probslice}\begin{split}
      \PP[y\in \cS^i_N]
      &\leq \PP\Big[E_x > \frac{1}{\beta\sqrt{N}}(\log g'_N - i \log2) \Big]\\
      &= f_{N,i}^{(1)}	\exp\Big\{-\frac{1}{2} \alpha'^2\beta^2N
        + \alpha'i\log2 - f_{N,i}^{(2)} - o(1)\Big\}(1+o(1)).
  \end{split}\end{equation}
  We separately control the two expressions $f_{N,i}^{(1)}$ and
  $f_{N,i}^{(2)}$. The first one equals
  \begin{equation*}
    f_{N,i}^{(1)} = \frac{\alpha'\beta\sqrt{2\pi N}}
    {\frac{\sqrt{2\pi}}{\beta\sqrt{N}}(\log g'_N - i \log2)}.
  \end{equation*}
  To control this, note that by definition of $I_N$,
  for all $i=1,\dots,I_N$,
  \begin{equation*}
    \log g'_N - i \log2 \geq \log h_N - \log2 = \delta\alpha\beta^2N -\log2.
  \end{equation*}
  It follows that, for all $i=1,\dots,I_N$, $f_{N,i}^{(1)}$ is bounded
  by some constant $c>0$, which can be chosen to be independent of $i$.
  The second expression to control in \eqref{eq:probslice} is
  \begin{equation*}
    f_{N,i}^{(2)} = \frac{i^2 \log^2 2}{2\beta^2 N}
    + \frac{i\log2}{\alpha'\beta^2 N}\log(\alpha'\beta\sqrt{2\pi N}).
  \end{equation*}
  This is strictly positive, so it can be omitted in \eqref{eq:probslice} in
  order to obtain an upper bound.
  Using the obtained control on $f_{N,i}^{(1)}$ and $f_{N,i}^{(2)}$
  in \eqref{eq:probslice}, as well as the
  fact that $\gamma'=\frac{\alpha'^2\beta^2}{2\log2}$, we conclude that
  for all $i=1,\dots,I_N$,
  \begin{equation*}
    \PP[y\in \cS^i_N] \leq c 2^{-\gamma'N} 2^{\alpha'i}.
  \end{equation*}
  In particular, the size $|\cS^i_N|$ of the $i$-th slice is dominated by a
  binomial random variable with parameters $n=2^N$ and
  $p=c2^{\alpha' i}2^{-\gamma'N}$. Then it follows by the
  Markov inequality that for every $\epsilon>0$,
  \begin{equation*}
    \PP\big[|\cS^i_N|> 2^{\epsilon N} c2^{\alpha' i}2^{(1-\gamma')N}\big]
    \leq 2^{-\epsilon N}.
  \end{equation*}
  Since $I_N = cN+O(1)$, a union bound and the Borel-Cantelli lemma imply that
  for every $\epsilon>0$, $\PP$-a.s.~for $N$ large enough,
  \begin{equation} \label{eq:sizeslice}
    |\cS_N^i|\leq 2^{\epsilon_N} c2^{\alpha' i} 2^{(1-\gamma')N},
    \text{ for all }i=1,\dots,I_N.
  \end{equation}

  Coming back to the contribution of the intermediate traps
  $\mathcal S_N\setminus \overline {\mathcal S}_N$ to the clock process,
  we use as before that
  $E^{\tau}_{\nu}[\ell_{tR_N}(y)] = \nu_y tR_N = \frac{1\wedge\tau_y}{Z_N}tR_N$,
  and $(1\vee\tau_y)(1\wedge\tau_y)=\tau_y\leq 2^{-i+1}g'_N$ on $\cS_N^i$.
  With \eqref{e:boundzN} for $Z_N$, \lemref{lem:boundrn} for $R_N$, and
  \eqref{eq:sizeslice} for the size of $\cS_N^i$, we obtain that for every
  $\varepsilon>0$, $\PP$-a.s.~for $N$ large enough, for all $i=1,\dots,I_N$,
  \begin{align*}
    E^{\tau}_{\nu}\Bigg[\frac{1}{g_N}\int_0^{tR_N}(1\vee\tau_{Y_s})
      \Ind{Y_s\in \cS^i_N}ds\Bigg]
    &= \frac{1}{g_N}\sum_{y\in\cS_N^i} (1\vee \tau_y)
    E^{\tau}_{\nu}[\ell_{tR_N}(y)]\\
    &\leq g_N^{-1}|\cS_N^i| 2^{-i+1}g'_N Z_N^{-1} t R_N\\
    &\leq c \frac{g'_N}{g_N} 2^{(\alpha-1)i} 2^{(\gamma-\gamma'+2\varepsilon)N}.
  \end{align*}
  Summing over $i=1,\dots,I_N$, $\PP$-a.s.~for $N$ large enough,
  \begin{equation} \label{eq:expshallow}
    E^{\tau}_{\nu}\Bigg[\frac{1}{g_N}\int_0^{tR_N}(1\vee\tau_{Y_s})
      \Ind{Y_s\in \bigcup_{i=1}^{I_N}\cS_N^i}ds\Bigg]\leq c' \frac{g'_N}{g_N}
    2^{(\gamma-\gamma'+2\varepsilon)N}.
  \end{equation}

  We claim that the right hand side of \eqref{eq:expshallow} decays
  exponentially in $N$ for $\varepsilon>0$ small enough. To this end, as
  before, it is enough to take account of the exponential parts in both
  $g_N$ and $g'_N$, which contribute to the right hand side of
  \eqref{eq:expshallow} by
  \begin{equation*}
    e^{(\alpha' - \alpha)\beta^2 N}
    = 2^{(\sqrt{\gamma'}-\sqrt{\gamma})\frac{2\beta}{\beta_c} N}.
  \end{equation*}
  Hence, to show the exponential decay on the right hand side of
  \eqref{eq:expshallow}, it is sufficient to prove that we can choose
  $\varepsilon>0$ small enough, such that
  \begin{equation} \label{eq:sqrtgamma}
    (\sqrt{\gamma'}-\sqrt{\gamma})\frac{2\beta}{\beta_c}
    + \gamma-\gamma'+2\varepsilon <0.
  \end{equation}
  With a first order approximation of the concave function $\sqrt{x}$ at
  $\gamma$,
  \begin{equation*}
    \frac{1}{2\sqrt{\gamma}} (\gamma-\gamma')
    < \sqrt{\gamma} - \sqrt{\gamma'}.
  \end{equation*}
  Since,
  $\frac{1}{2\sqrt{\gamma}}= \frac{\beta_c}{2\alpha\beta}>\frac{\beta_c}{2\beta}$
  and $\alpha<1$, this implies
  \begin{equation*}
    \frac{\beta_c}{2\beta} (\gamma-\gamma')
    < \sqrt{\gamma} - \sqrt{\gamma'},
  \end{equation*}
  and  \eqref{eq:sqrtgamma} thus holds for $\varepsilon>0$ small enough.
  The right hand side of \eqref{eq:expshallow} then
  decays exponentially, and with Markov inequality we conclude that
  \begin{equation*}
    \frac{1}{g_N}\int_0^{tR_N}(1\vee\tau_{Y_s})
    \Ind{Y_s\in \bigcup_{i=1}^{I_N}\cS_N^i}ds
    \xrightarrow{N\to\infty} 0 \quad \PP\text{-a.s.~in
		$P_{\nu}^{\tau}$-probability.}
  \end{equation*}
  This together with \eqref{eq:veryshallow} finishes the proof of the
  proposition.
\end{proof}

\section{Conclusion} %<<<1
Theorem~\ref{thm:main} is a direct consequence of
Propositions~\ref{prop:deep}, \ref{prop:shallow} and
Lemma~\ref{lem:boundrn}.

\bigskip
\bigskip

%------------------------------------------------------------------------------
%------------------------------------------------------------------------------
%------------------------------------------------------------------------------
%------------------------------------------------------------------------------

\appendix %<<<1
\section{Extremal characterization of mean hitting time}\label{appendix}

In this appendix we give the proof of the formula \eqref{eq:boundbycond}
which gives a lower bound on the mean hitting time of a set when starting
from stationarity. This formula is a
continuous-time version of (a half of) Proposition~3.2 from \cite{CTW11}. This
proposition, as well
as the underlying result \cite[Proposition~3.41]{AFb}, are stated for a
continuous-time Markov chain whose waiting times are mean-one exponential
random variables.
We were not able to find analogous statements for general continuous-time
Markov chains in the literature, so we provide short proofs here,
for the sake of completeness.

We start by introducing some notation.
Let $Y$ be a reversible continuous-time Markov chain on a finite state space $\cS$
with transition rates $q_{xy}$ and invariant probability measure $\nu_x$, denote by
$P_{\nu}$ and $P_x$ the laws of $Y$ started stationary and from $x$
respectively, and by $E_{\nu}$, $E_x$ the corresponding expectations.
Define the conductances as $c_{xy}=\nu_x q_{xy} = \nu_y q_{yx}$. Let
$q_x=\sum_y q_{xy}$ and $c_x = \sum_y c_{xy}$. The transition
probability from $x$ to $y$ is
$p_{xy}=\frac{q_{xy}}{q_x}=\frac{c_{xy}}{c_x}$.
In the same way as in \secref{sec:defs}, we define the hitting time $H_x$ and the
return time $H^+_x$ to $x$ by $Y$, and similarly $H_A$ and $H^+_A$ for sets
$A\subset\cS$.

A function $g$ on $\cS$ is called harmonic in $x$, if
$\sum_{y}g(y)p_{xy} = g(x)$. For $x\in\cS$ and $B\subset\cS\setminus\{x\}$, the
equilibrium potential $g^{\star}_{x,B}$ is defined as the unique function on
$\cS$ that is harmonic on $(x\cup B)^c$, 1 on $x$ and $0$ on $B$. It is
well known that
\begin{equation*}
  g^{\star}_{x,B}(y) = P_y[H_x \leq H_B].
\end{equation*}
For a function $g:\cS\to\RR$, the Dirichlet form is defined as
\begin{equation}\label{def:dirichlet}
  D(g,g) = \frac{1}{2} \sum_{z\in\cS} \sum_{y\sim z} \nu_z q_{zy}
  (g(z)-g(y))^2,
\end{equation}
where $y\sim z$ means that $y$ and $z$ are neighbors in the sense that
$q_{zy}>0$.

The following proposition is the required generalization of
Proposition~3.2 of \cite{CTW11}.

\begin{proposition}
  \label{prop:appendix}
  For every $x\in \mathcal S$ and $B\subset \mathcal S\setminus\{x\}$
  \begin{equation}
    \label{e:appendix}
    \frac{1}{E_{\nu}[H_x]} \leq D(g^{\star}_{x,B},g^{\star}_{x,B})
    \nu(B)^{-2} = c_x P_x[H^+_x> H_B]\nu (B)^{-2}.
  \end{equation}
\end{proposition}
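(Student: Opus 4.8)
The plan is to first prove a general variational lower bound, $E_\nu[H_A]\ge \nu(g)^2/D(g,g)$ for every function $g$ vanishing on a set $A$, and then to specialise it to $A=\{x\}$ with a test function built from the equilibrium potential $g^{\star}_{x,B}$. Throughout we assume, as is implicit, that $Y$ is irreducible, so that $\nu_z>0$ for all $z$ and all mean hitting times are finite. It is convenient to work with the polarised Dirichlet form $D(f,g)=\frac12\sum_{z\in\cS}\sum_{y\sim z}\nu_z q_{zy}(f(z)-f(y))(g(z)-g(y))$, which agrees with \eqref{def:dirichlet} on the diagonal, and with the generator $(\mathcal{L}f)(z)=\sum_{y}q_{zy}(f(y)-f(z))$; reversibility gives the symmetric identity $D(f,g)=-\sum_{z\in\cS}\nu_z g(z)(\mathcal{L}f)(z)$.

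For a nonempty $A\subset\cS$ set $u(z)=E_z[H_A]$, which is finite and satisfies $u\equiv 0$ on $A$ and, by conditioning on the first jump, $(\mathcal{L}u)(z)=-1$ for $z\notin A$. Hence, for any $g$ with $g|_A=0$, all terms with $z\in A$ in the above identity vanish (because $g(z)=0$) and all terms with $z\notin A$ contribute $\nu_z g(z)$, so $D(u,g)=\sum_{z\notin A}\nu_z g(z)=\nu(g)$; in particular $D(u,u)=\nu(u)=E_\nu[H_A]$. Applying the Cauchy--Schwarz inequality to the positive semidefinite form $D$ yields $\nu(g)^2=D(u,g)^2\le D(u,u)D(g,g)=E_\nu[H_A]\,D(g,g)$, i.e.
\[
E_\nu[H_A]\ge \frac{\nu(g)^2}{D(g,g)}\qquad \text{whenever } g|_A=0 .
\]

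Now take $A=\{x\}$ and $g=1-g^{\star}_{x,B}$. Then $g(x)=0$, and since adding a constant does not change increments, $D(g,g)=D(g^{\star}_{x,B},g^{\star}_{x,B})$. Moreover, using $g^{\star}_{x,B}(y)=P_y[H_x\le H_B]$,
\[
\nu(g)=1-\sum_y\nu_y P_y[H_x\le H_B]=\sum_y\nu_y P_y[H_B<H_x]=P_\nu[H_B<H_x]\ge \nu(B),
\]
the last step because on the event $\{Y_0\in B\}$, of $P_\nu$-probability $\nu(B)$, one has $H_B=0<H_x$. Combining the last two displays gives $E_\nu[H_x]\ge \nu(B)^2/D(g^{\star}_{x,B},g^{\star}_{x,B})$, which is the inequality in \eqref{e:appendix}. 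Finally, the equality in \eqref{e:appendix} follows by computing $D(g^{\star}_{x,B},g^{\star}_{x,B})$ from the integration-by-parts identity with $h:=g^{\star}_{x,B}$: the sum over $z$ reduces to the single term $z=x$ (at $z\notin\{x\}\cup B$ harmonicity of $h$ gives $(\mathcal{L}h)(z)=0$, and $h$ vanishes on $B$), so $D(h,h)=-\nu_x(\mathcal{L}h)(x)=\sum_y c_{xy}(1-h(y))=\sum_y c_{xy}P_y[H_B<H_x]=c_x\sum_y p_{xy}P_y[H_B<H_x]=c_xP_x[H_x^+>H_B]$, the last equality by a one-step decomposition from $x$.

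All the ingredients are elementary; the step deserving the most care is the identity $D(u,g)=\nu(g)$, where one has to use that $(\mathcal{L}u)(z)=-1$ holds only for $z\notin A$ and that $g$ (and, in the diagonal case, $u$ itself) vanishes on $A$ in order to discard the remaining boundary terms. The rest --- the Dirichlet-form Cauchy--Schwarz, the first-jump identities, and the harmonicity of the equilibrium potential --- is routine.
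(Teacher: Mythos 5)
Your proof is correct, and it takes a genuinely different route from the paper. The paper first establishes the full extremal identity of Lemma~\ref{l:extremalhx}, namely $1/E_{\nu}[H_x] = \inf\{D(g,g): g(x)=1,\ \nu(g)=0\}$, by a Lagrange-multiplier argument that explicitly identifies the minimizer as $g(y)=Z_{yx}/Z_{xx}$ and then invokes the Aldous--Fill identities $Z_{xx}=\nu_x E_{\nu}[H_x]$, $\nu_x E_y[H_x]=Z_{xx}-Z_{yx}$; it then inserts the affine renormalization $\tilde g=(g^{\star}-\nu(g^{\star}))/(1-\nu(g^{\star}))$ of the equilibrium potential into this identity and bounds $1-\nu(g^{\star})\ge\nu(B)$. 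You instead derive the one-sided variational bound $E_{\nu}[H_A]\ge\nu(g)^2/D(g,g)$ directly from the integration-by-parts identity $D(u,g)=\nu(g)$ (valid for $u(z)=E_z[H_A]$ and any $g$ vanishing on $A$) together with Cauchy--Schwarz for the Dirichlet form, then take $g=1-g^{\star}_{x,B}$ and bound $\nu(g)=P_{\nu}[H_B<H_x]\ge\nu(B)$. Your approach is shorter and more elementary: it avoids Lagrange multipliers, the identification of the minimizer, and the occupation-measure identities, precisely because Proposition~\ref{prop:appendix} needs only the easy ``$\le$'' direction of the extremal characterization. The trade-off is that the paper's Lemma~\ref{l:extremalhx} is a stronger, reusable statement (a two-sided identity), whereas your argument gives only the inequality. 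The equality $D(g^{\star}_{x,B},g^{\star}_{x,B})=c_xP_x[H_x^+>H_B]$ is computed by essentially the same discrete Green's identity in both proofs, with the sum localizing to $z=x$ by harmonicity off $\{x\}\cup B$ and vanishing of $g^{\star}$ on $B$.
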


To prove this proposition we will need a lemma which is a
generalization of  \cite[Proposition~3.41]{AFb} giving the extremal
characterization of the mean hitting time.
\begin{lemma}
  \label{l:extremalhx}
  For every $x\in \mathcal S$,
  \begin{equation}
  \label{e:extremalhx}
    \frac{1}{E_{\nu}[H_x]} = \inf\left\{ D(g,g):~g:\cS\to\RR,~g(x)=1, \sum_{y
        \in\cS}\nu_y g(y)=0\right\}.
  \end{equation}
\end{lemma}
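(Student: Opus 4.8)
The plan is to exhibit the minimiser explicitly and then verify optimality by the Dirichlet principle. Throughout, write $\cL$ for the generator of $Y$, $(\cL f)(y)=\sum_{z}q_{yz}(f(z)-f(y))$. Two elementary consequences of reversibility (i.e.\ of $c_{yz}=\nu_y q_{yz}=\nu_z q_{zy}$) will be used repeatedly: the summation-by-parts identity $D(f,g)=-\sum_{y}\nu_y f(y)\,(\cL g)(y)$, and $\sum_{y}\nu_y (\cL f)(y)=0$ for every $f:\cS\to\RR$. Both are checked by expanding the sums and using the symmetry of $c_{yz}$ against the antisymmetry of $g(z)-g(y)$.

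\textbf{Step 1: the candidate minimiser.} Since $\cS$ is finite (and $Y$ irreducible), $h(y):=E_y[H_x]$ is finite, $h(x)=0$, and $h$ solves the Poisson equation $(\cL h)(y)=-1$ for all $y\neq x$. Set $g^{\star}:=1-h/E_{\nu}[H_x]$. Then $g^{\star}(x)=1$ and $\sum_{y}\nu_y g^{\star}(y)=1-E_{\nu}[H_x]/E_{\nu}[H_x]=0$, so $g^{\star}$ lies in the admissible class; moreover $(\cL g^{\star})(y)=1/E_{\nu}[H_x]=:b$ for every $y\neq x$.

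\textbf{Step 2: the Dirichlet principle.} For any admissible $g$, put $\phi:=g-g^{\star}$, so that $\phi(x)=0$ and $\sum_{y}\nu_y\phi(y)=0$. Bilinearity of $D$ gives $D(g,g)=D(g^{\star},g^{\star})+2D(g^{\star},\phi)+D(\phi,\phi)$, and by summation by parts, since $(\cL g^{\star})(y)=b$ off $x$ and $\phi(x)=0$,
\begin{equation*}
  D(g^{\star},\phi)=-\sum_{y}\nu_y\phi(y)(\cL g^{\star})(y)
  =-b\sum_{y\neq x}\nu_y\phi(y)-\nu_x\phi(x)(\cL g^{\star})(x)
  =-b\Big(\sum_{y}\nu_y\phi(y)-\nu_x\phi(x)\Big)=0.
\end{equation*}
Hence $D(g,g)=D(g^{\star},g^{\star})+D(\phi,\phi)\ge D(g^{\star},g^{\star})$, with equality at $g=g^{\star}$; so the infimum in \eqref{e:extremalhx} is attained and equals $D(g^{\star},g^{\star})$.

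\textbf{Step 3: evaluating $D(g^{\star},g^{\star})$.} Using summation by parts once more, splitting off the term $y=x$, and using $g^{\star}(x)=1$, $(\cL g^{\star})(y)=b$ for $y\neq x$, and $\sum_y\nu_y g^{\star}(y)=0$,
\begin{equation*}
  D(g^{\star},g^{\star})=-\sum_{y}\nu_y g^{\star}(y)(\cL g^{\star})(y)
  =-\nu_x(\cL g^{\star})(x)-b\sum_{y\neq x}\nu_y g^{\star}(y)
  =-\nu_x(\cL g^{\star})(x)+b\nu_x .
\end{equation*}
It remains to identify $(\cL g^{\star})(x)$, which we get \emph{without} any renewal computation: from $\sum_{y}\nu_y(\cL g^{\star})(y)=0$ we obtain $\nu_x(\cL g^{\star})(x)=-b\sum_{y\neq x}\nu_y=-b(1-\nu_x)$. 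Substituting, $D(g^{\star},g^{\star})=b(1-\nu_x)+b\nu_x=b=1/E_{\nu}[H_x]$, which proves the lemma.

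The argument is almost entirely algebraic once three standard inputs are in place: the two reversibility identities above and the fact that $y\mapsto E_y[H_x]$ solves $\cL h=-1$ off $x$ with $h(x)=0$ and is finite. This is the point to be careful about, since the cited references (\cite[Prop.~3.41]{AFb}, \cite[Prop.~3.2]{CTW11}) work with unit-rate holding times, whereas here the holding rates $q_x$ are arbitrary; I would therefore record these three facts for general reversible continuous-time chains before running Steps 1–3, after which nothing further is needed.
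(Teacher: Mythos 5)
Your proof is correct, and it is a genuinely different (and arguably cleaner) route to the same conclusion. The paper arrives at the same minimiser, but indirectly: it sets up a Lagrange-multiplier stationarity condition for the constrained quadratic problem, recognises that the fundamental-matrix quantity $y\mapsto Z_{yx}/Z_{xx}$ (with $Z_{yx}=\int_0^\infty(P_y[Y_t=x]-\nu_x)\,dt$) satisfies the same linear relation, then invokes the identities $Z_{xx}=E_{\nu}[H_x]\nu_x$, $\nu_x E_y[H_x]=Z_{xx}-Z_{yx}$ from \cite{AFb} and $D(h,h)=E_{\nu}[H_x]$ from \cite[Lemma~6]{AB92} to finish. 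You instead write down the candidate $g^{\star}=1-E_\cdot[H_x]/E_{\nu}[H_x]$ directly, check admissibility, and prove optimality by the exact decomposition $D(g,g)=D(g^{\star},g^{\star})+D(\phi,\phi)$ after verifying $D(g^{\star},\phi)=0$; you then evaluate $D(g^{\star},g^{\star})$ from scratch using only $\sum_y\nu_y(\cL f)(y)=0$. What this buys you: (i) the optimality is established by an honest inequality, whereas the paper's Lagrange derivation only identifies a stationary point and leaves its global minimality implicit; (ii) you avoid the $Z$-function and the two cited lemmas entirely, which is appropriate here since — as you note — those references are stated for unit-rate chains and the lemma is meant precisely to cover general holding rates. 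The only inputs you need beyond the two reversibility identities are that $h(y)=E_y[H_x]$ is finite, vanishes at $x$, and satisfies $(\cL h)(y)=-1$ off $x$; all three hold for any irreducible finite-state continuous-time chain by conditioning on the first jump, so the argument is complete.
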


\begin{proof}
  The proof follows the lines of
  \cite{AFb} with some minor changes to fit into the
  setting of general continuous-time chains.

  We first show that there is a minimizing function $g$ that equals
  $g(y) = \frac{Z_{yx}}{Z_{xx}}$, where
  \begin{equation*}
    Z_{yx} = \int_0^{\infty} \Big(P_y[Y_t=x]-\nu_x\Big) \,dt.
  \end{equation*}

  To this end, we introduce the Lagrange multiplier $\gamma$ and consider $g$
  as the minimizer of $D(g,g) + \gamma\sum_z \nu_z g(z)$ with $g(x)=1$. The
  contribution to this of $g(y)$ for $y\neq x$ is
  \begin{equation*}
    \sum_{z\sim y} \nu_y q_{yz} (g(y)-g(z))^2 + \gamma \nu_y g(y),
  \end{equation*}
  which is minimized if
  \begin{equation*}
    2\sum_{z\sim y} \nu_y q_{yz} (g(y)-g(z)) + \gamma \nu_y =0.
  \end{equation*}
  From this we get for all $y\in\cS$, by introducing the term including the
  parameter $\beta$ for the case $y=x$, that
  \begin{equation*}
    g(y) = \sum_{z\sim y} \frac{q_{yz}}{q_y} g(z) - \frac{\gamma}{2}
    \frac{1}{q_y} + \frac{\beta}{q_y} \Ind{y=x}.
  \end{equation*}
  Multiplying by $q_y$ and $\nu_y$, and summing over all $y\in\cS$,
  \begin{equation*}
    \sum_{y} \sum_{z\sim y} \nu_y q_{yz} g(y) = \sum_y \sum_{z\sim y} \nu_y
    q_{yz} g(z) - \frac{\gamma}{2} + \beta \nu_x.
  \end{equation*}
  By reversibility $\nu_y q_{yz} = \nu_z q_{zy}$, so the term on the left and
  the first term on the right are identical, which gives $\frac{\gamma}{2} =
  \beta \nu_x$. Thus there is a minimizing $g$ such that
  \begin{equation}
    \label{e:gprop}
    g(y) =   \frac{\beta}{q_y}
    \big(\Ind{y=x}-\nu_x\big) +
    \sum_{z\sim y} \frac{q_{yz}}{q_y} g(z).
  \end{equation}

  We now show that up to the factor $\beta$ the function $y\mapsto Z_{yx}$ satisfies
  the same relation. Indeed, by the strong Markov property at the time $J_1$
  of the first jump of $Y$, which under $P_y$ is an exponential random variable
  with mean $\frac{1}{q_y}$,
  \begin{equation*}
    \begin{split}
      Z_{yx} &= \int_0^{\infty}\left(\int_0^{J_1}
        \big(\Ind{y=x}-\nu_x\big) dt
        +\sum_{z\sim y} \frac{q_{yz}}{q_y} \int_0^{\infty}
        \big(P_z[Y_t=x]
        -\nu_x\big) dt\right)dP_y(J_1) \\
      &= \frac{1}{q_y}\big(\Ind{y=x}-\nu_x\big)  + \sum_{z\sim y}
      \frac{q_{yz}}{q_y} Z_{zx}.
    \end{split}
  \end{equation*}
  The function $g(y)= \frac{Z_{yx}}{Z_{xx}}$ thus satisfies the
  constrains of the variational problem in \eqref{e:extremalhx} and
  fulfills \eqref{e:gprop} with $\beta = 1/Z_{xx}$. It is thus the
  minimizer of this variational problem.

  Moreover, by \cite[Lemmas~2.11~and~2.12]{AFb}, we have
  $Z_{xx}=E_{\nu}[H_x] \nu_x$ and $\nu_x E_y[H_x] = Z_{xx}-Z_{yx}$.
  Denoting $h(y)=E_y[H_x]$ and using these equalities,
  we obtain
  \begin{equation*}
    D(g,g) = \frac{1}{E_{\nu}[H_x]^2}D(h,h) = \frac{1}{E_\nu [H_x]},
  \end{equation*}
  where for the last equality we used  $D(h,h) = E_{\nu}[H_x]$, by
  e.g.~\cite[Lemma~6]{AB92}. This completes the proof.
\end{proof}

With this lemma the proof of Proposition~\ref{prop:appendix} follows the
lines of \cite{CTW11}.

\begin{proof}[Proof of Proposition~\ref{prop:appendix}]
  To prove the inequality in \eqref{e:appendix}, it is sufficient to
  modify the function $g^{\star}_{x,B}$ so that it becomes admissible
  for the variational problem in \lemref{l:extremalhx}. Write
  $g^{\star}$ for $g^{\star}_{x,B}$ and define $\tilde{g}$ on $\cS$ as
  \begin{equation*}
    \tilde{g} (z)=\frac{g^{\star}(z)-\sum_{y\in\cS}\nu_y g^{\star}(y)}
    {1-\sum_{y\in\cS}\nu_y g^{\star}(y)}.
  \end{equation*}
  Then $\tilde{g}$ equals $1$ on $x$ and
  $\sum_{z\in\cS}\nu_z \tilde{g}(z)=0$.
  Hence, by \lemref{l:extremalhx},
  \begin{equation*}
    \frac{1}{E_{\nu}[H_x]} \leq D(\tilde{g},\tilde{g})
    = D(g^{\star},g^{\star}) \left(1-\sum_{y\in\cS}\nu_y g^{\star}(y)
    \right)^{-2}.
  \end{equation*}
  But $g^{\star}$ is non-negative, bounded by $1$ and non-zero only on $B^c$,
  therefore $\sum_{y\in\cS}\nu_y g^{\star}(y) \leq \nu(B^c)$, the first
  part of \propref{prop:appendix} follows.

  To prove the equality in \eqref{e:appendix}, we show that
  \begin{equation}
    \label{e:dfads}
    D(g^{\star}_{x,B},g^{\star}_{x,B}) = P_x[H^+_x > H_B]c_x.
  \end{equation}
  Indeed, let again $g^{\star}=g^{\star}_{x,B}$. If $g^{\star}$ is harmonic in $z$, the
  second sum in the Dirichlet form \eqref{def:dirichlet} is
  \begin{equation*}
    \sum_{y\sim z} c_{zy}(g^{\star}(z)-g^{\star}(y))^2
    = \sum_{y\sim z} c_{zy} (g^{\star}(y)^2 - g^{\star}(z)^2).
  \end{equation*}
  This shows that the contribution to the Dirichlet form of every edge that
  connects two vertices in which $g^{\star}$ is harmonic or zero vanishes.
  Therefore $D(g^{\star},g^{\star})$ reduces to
  \begin{align*}
    D(g^{\star},g^{\star}) &=\frac{1}{2}\left(
      \sum_{y\sim x} c_{xy}(1-g^{\star}(y))^2
      + \sum_{y\sim x} c_{xy}(1-g^{\star}(y)^2)\right)\\
    &= \sum_{y\sim x} c_{xy}(1-g^{\star}(y))\\
    &= c_x \sum_{y\sim x} p_{xy} P_y[H_x>H_B]\\
    &= c_x P_x[H_x^+ > H_B].
  \end{align*}
  This proves \eqref{e:dfads} and thus the proposition.
\end{proof}

%------------------------------------------------------------------------------
%------------------------------------------------------------------------------
%------------------------------------------------------------------------------
%------------------------------------------------------------------------------

%<<<1 Biblio
%\renewcommand\MR[1]{\relax\ifhmode\unskip\spacefactor3000
%\space\fi \MRhref{#1}{#1}}
%\renewcommand{\MRhref}[2]%
%{\href{http://www.ams.org/mathscinet-getitem?mr=#1}{MR #2}}
%\providecommand{\href}[2]{#2}
%
%\bibliographystyle{jcamsalphainit}
%\bibliography{references}

\providecommand{\bysame}{\leavevmode\hbox to3em{\hrulefill}\thinspace}
%define \MR and \MRhref to do something useful
\providecommand\MR{}
\renewcommand\MR[1]{\relax\ifhmode\unskip\spacefactor3000
\space\fi \MRhref{#1}{#1}}
\providecommand\MRhref{}
\renewcommand{\MRhref}[2]%
{\href{http://www.ams.org/mathscinet-getitem?mr=#1}{MR #2}}
\providecommand{\href}[2]{#2}

\end{document}